\definecolor{codegreen}{rgb}{0,0.6,0}
\definecolor{codegray}{rgb}{0.5,0.5,0.5}
\definecolor{codepurple}{rgb}{0.58,0,0.82}
\definecolor{backcolour}{rgb}{0.95,0.95,0.92}
\lstdefinestyle{mystyle}{
    backgroundcolor=\color{backcolour},   
    commentstyle=\color{codegreen},
    keywordstyle=\color{magenta},
    numberstyle=\tiny\color{codegray},
    stringstyle=\color{codepurple},
    basicstyle=\ttfamily\footnotesize,
    breakatwhitespace=false,         
    breaklines=true,                 
    captionpos=b,                    
    keepspaces=true,                 
    numbers=left,                    
    numbersep=6pt,                  
    showspaces=false,                
    showstringspaces=false,
    showtabs=false,                  
    tabsize=4
}
\theoremstyle{plain}
\newtheorem{thm}{Theorem}[section]
\newtheorem*{thm*}{Theorem}
\newtheorem{algo}{Sturm's Algorithm}[section]
\numberwithin{equation}{section}
\newtheorem{cor}{Corollary}[section]
\newtheorem{lem}{Lemma}[section]
\theoremstyle{definition}
\newcounter {own}
\def\theown {\thesection  .\arabic{own}}
\newcommand{\sgn}{\operatorname{sgn}}
\DeclareMathOperator*{\esssup}{ess\,sup}
\DeclareMathOperator*{\essinf}{ess\,inf}
\DeclareMathOperator*{\supp}{supp}
\newcounter{alphabet}
\newcommand{\ds}{\displaystyle}
\newcounter{minutes}\setcounter{minutes}{\time}
\newcounter{hours}\setcounter{hours}{\time}
\begin{document}
\bibliographystyle{amsplain}
\title{On Gabor frames generated by  B-splines, totally positive functions, and Hermite functions}

\thanks{
File:~AntonyRiya5.tex,
          printed: 2023-04-20,
          \thehours.\ifnum\theminutes<10{0}\fi\theminutes}

\author{Riya Ghosh}
\author{A. Antony Selvan}

\address{Riya Ghosh, Indian Institute of Technology Dhanbad, Dhanbad-826 004, India.}
\email{riya74012@gmail.com}
\address{A. Antony Selvan, Indian Institute of Technology Dhanbad, Dhanbad-826 004, India.}
\email{antonyaans@gmail.com}

\subjclass[2020]{Primary  42C15, 94A20}
\keywords{B-splines, Gabor frames, Hermite functions, shift-invariant spaces, totally positive functions, two-sided exponential function.}
\maketitle
\pagestyle{myheadings}
\markboth{Riya Ghosh and A. Antony Selvan}{On Gabor frames generated by B-splines, totally positive functions, and Hermite functions}
\begin{abstract}
The frame set of a window $\phi\in L^2(\mathbb{R})$ is the subset of all lattice parameters
$(\alpha, \beta)\in \mathbb{R}^2_+$ such that $\mathcal{G}(\phi,\alpha,\beta)=\{e^{2\pi i\beta m\cdot}\phi(\cdot-\alpha k) : k, m\in\mathbb{Z}\}$ forms a frame for $L^2(\mathbb{R})$. In this paper, we investigate the frame set of B-splines, totally positive functions, and Hermite functions. We derive a sufficient condition for Gabor frames using the connection between sampling theory in shift-invariant spaces and Gabor analysis. As a consequence, we obtain a new frame region belonging to the frame set of B-splines and Hermite functions. For a class of functions that includes certain totally positive functions, we prove that for any choice of lattice parameters $\alpha, \beta>0$ with $\alpha\beta<1,$ there exists a $\gamma>0$ depending on $\alpha\beta$ such that $\mathcal{G}(\phi(\gamma\cdot),\alpha,\beta)$ forms a frame for $L^2(\mathbb{R})$.
\end{abstract}
\section{Introduction}
Given a nonzero window function $\phi\in L^2(\mathbb{R})$ and lattice parameters $\alpha$, $\beta>0$, the Gabor system  $\mathcal{G}(\phi,\alpha, \beta)=\{e^{2\pi i\beta m\cdot}\phi(\cdot-\alpha k): k, m\in\mathbb{Z}\}$ is called a (Gabor) frame for $L^2(\mathbb{R})$ if there exist two positive constants $A$, $B$ such that 
\begin{equation}\label{Gaborframe}
A\|f\|^2\leq\displaystyle\sum_{n\in\mathbb{Z}}|\langle f,e^{2\pi i\beta m\cdot}\phi(\cdot-\alpha k)\rangle|^2\leq B\|f\|^2,
\end{equation}
for every $f\in L^2(\mathbb{R})$. The constants $A$ and $B$ are called frame bounds. One of the fundamental problems in Gabor analysis is to determine the values of $\alpha, \beta>0$ such that $\mathcal{G}(\phi,\alpha, \beta)$ is a frame for $L^2(\mathbb{R})$. The set of all such lattice parameters is referred to as the frame set of $\phi$ and is given by
$$\mathcal{F}(\phi)= \left\{(\alpha, \beta) \in\mathbb{R}^2_+ : \mathcal{G}(\phi, \alpha,\beta)~ \text{is a frame}\right\} .$$
Throughout this paper, we use the following version of the Fourier transform: 
$$\widehat f(w)= \int_{-\infty}^{\infty} f(x) e^{-2\pi \mathrm{i}wx}dx,~ w\in\mathbb{R}.$$
It follows from the definition of Fourier transform that $\mathcal{F}(\phi)=\mathcal{F}(\widehat{\phi})$. Feichtinger and Kaiblinger \cite{HGF1} proved that 
$\mathcal{F}(\phi)$ is an open subset of $\mathbb{R}^2_+$ for a window $\phi$ in Feichtinger algebra. The fundamental density theorem asserts that 
$$\mathcal{F}(\phi)\subseteq \{(\alpha, \beta) \in\mathbb{R}^2_+: \alpha\beta\le 1\}$$ 
(see \cite{time, hedtg}). In addition, if $\phi$ is in Feichtinger algebra, Balian-Low theorem states that $\mathcal{F}(\phi)\subseteq \{(\alpha, \beta) \in\mathbb{R}^2_+: \alpha\beta< 1\}$ \cite{balian, rdbalian}. For a more comprehensive discussion on Gabor analysis, we refer to \cite{ole, time}. The frame set is completely characterized only for a few windows: the Gaussian $e^{-\pi x^2}$ \cite{fblyu, dtsibf2}, the hyperbolic secant \cite{hsyg}, the two-sided exponential  $e^{-|x|}$ \cite{tgfcd}, the one-sided exponential $e^{-x}\chi_{[0,\infty)}(x)$ \cite{whfb},  the characteristic function $\chi_{[0,c)},~c>0$ \cite{abc, when}, the totally positive functions of finite type $\ge 2$ or of Gaussian type \cite{ duke, stsis}, and the Herglotz functions \cite{gfrf}.

The construction of Gabor frames is closely connected to the sampling problem in shift-invariant spaces \cite{duke}. To explain this connection more precisely, let us introduce the shift-invariant space of a generator $\phi$ defined by 
$$V_h(\phi):=\left\{f\in L^2(\mathbb{R}): f(\cdot)=\sum\limits_{k\in\mathbb{Z}}d_k\phi(\cdot-hk)~\text{for some}~ (d_k)\in \ell^2(\mathbb{Z})\right\},~h>0.$$
Recall that $\phi$ is said to be stable generator for $V_h(\phi)$ if $\{\phi(\cdot-hk) : k \in \mathbb{Z}\}$ is a Riesz basis for $V_h(\phi)$, \textit{i.e.},
$\overline{span}\{\phi(\cdot-hk): k\in\mathbb{Z}\}=V_h(\phi)$ and there exist constants $A$, $B>0$ such that
\begin{equation}\label{rieszbasis}
A\sum_{k\in\mathbb{Z}}|d_k|^2\leq\big\|\sum_{k\in\mathbb{Z}}d_k\phi(\cdot-hk)\big\|^2\leq
B\sum_{k\in\mathbb{Z}}|d_k|^2,
\end{equation}
for all $(d_k)\in\ell^2(\mathbb{Z})$.    
It is well known that $\phi$ is a stable generator for $V_h(\phi)$ if and only if
\begin{equation}\label{eqn2.5}
 0<\|\Phi_{h}\|_0 \leq \|\Phi_{h}\|_{\infty} <\infty, 
\end{equation}
where $\|\Phi_{h}\|_0$ and $\|\Phi_{h}\|_\infty$ denote the essential infimum and supremum of the function
$\Phi_{h}(w):=\tfrac{1}{h}\sum_{n\in\mathbb{Z}}|\widehat{\phi}(w+\tfrac{n}{h})|^2$ in the interval $[0,1/h],$ respectively. 
The shift-invariant space $V_h(\phi)$ is called a reproducing kernel Hilbert space if for each $x\in\mathbb{R}$, there exists a unique element $K_x\in V_h(\phi)$ such that
$$f(x)=\langle f, K_x\rangle,~\text{for every}~f\in V_h(\phi).$$
The function $K(x,t):=K_x(t)=\langle K_x, K_t\rangle$ is called the reproducing kernel of $V_h(\phi)$. A set $\Lambda=\{x_{n}:n\in\mathbb{Z}\}$ of real numbers is said to be a set of stable sampling for $V_h(\phi)$ if there exist constants $A$, $B>0$ such that
\begin{eqnarray}\label{pap3eqn2.7}
A\| f\|^2\leq\ds\sum\limits_{n\in\mathbb{Z}}\ds|f(x_n)|^2 \leq B\| f\|^2, 
\end{eqnarray}
for all $f\in V_h(\phi)$. The numbers $A$ and $B$ are called sampling bounds. The following theorem which was implicitly proved by Janssen \cite{J95} and Ron and Shen \cite{Ron} establishes a fundamental link between Gabor analysis and the theory of sampling in shift-invariant spaces.
\begin{thm}\label{main}
Let $\phi$ be a stable generator for $V_{1/\beta}(\phi)$. Then
$\mathcal{G}(\phi,\alpha, \beta)$ forms a Gabor frame for $L^2(\mathbb{R})$ with frame bounds $A$ and $B$ if and only if $x+\alpha\mathbb{Z}$ is a set of stable sampling for $V_{1/\beta}(\phi)$ with sampling bounds ${\beta A}{\|\Phi_{1/\beta}\|^{-1}_{\infty}}$  and ${\beta B}{\|\Phi_{1/\beta} \|^{-1}_{0}}$ for almost all $x\in\mathbb{R}.$
\end{thm}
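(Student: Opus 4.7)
The plan is to convert the Gabor frame inequality into a sampling inequality on $V_{1/\beta}(\phi)$ by means of a Fourier-series / Parseval identity, and then to exploit the Riesz basis bounds from \eqref{eqn2.5} together with a pointwise decoupling in the fiber variable $x$.

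I would first rewrite the Gabor coefficients by slicing the integration along the lattice $\tfrac{1}{\beta}\mathbb{Z}$. Writing $t = x + n/\beta$ with $x \in [0, 1/\beta)$ and $n \in \mathbb{Z}$, and using that $e^{-2\pi i \beta m(x+n/\beta)} = e^{-2\pi i \beta m x}$, one obtains
\begin{equation*}
\langle f, e^{2\pi i \beta m \cdot}\phi(\cdot - \alpha k)\rangle = \int_{0}^{1/\beta} G_k(x)\, e^{-2\pi i \beta m x}\, dx,
\end{equation*}
where $G_k(x) := \sum_{n \in \mathbb{Z}} f(x + n/\beta)\,\overline{\phi(x + n/\beta - \alpha k)}$ is $1/\beta$-periodic. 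Applying Parseval on the torus of length $1/\beta$ and then summing over $k$ yields the key identity
\begin{equation*}
\sum_{k,m \in \mathbb{Z}}\bigl|\langle f, e^{2\pi i \beta m \cdot}\phi(\cdot - \alpha k)\rangle\bigr|^{2} = \frac{1}{\beta}\int_{0}^{1/\beta}\sum_{k \in \mathbb{Z}} |G_k(x)|^{2}\, dx.
\end{equation*}

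The next step is to recognise each $G_k(x)$ as a sample value of a function in a shift-invariant space. With $\phi^{*}(s) := \overline{\phi(-s)}$, a direct computation shows $G_k(x) = F_x(\alpha k)$, where $F_x$ is the translate by $x$ of the element of $V_{1/\beta}(\phi^{*})$ whose Riesz coefficients are $(f(x + n/\beta))_n$. Since $|\widehat{\phi^{*}}|^{2} = |\widehat{\phi}|^{2}$, the periodisation $\Phi_{1/\beta}$ is unchanged, so $\phi^{*}$ is itself a stable generator with the same Riesz constants, and \eqref{eqn2.5} gives, for a.e.\ $x$,
\begin{equation*}
\|\Phi_{1/\beta}\|_{0}\sum_{n \in \mathbb{Z}}|f(x+n/\beta)|^{2} \;\leq\; \|F_x\|^{2} \;\leq\; \|\Phi_{1/\beta}\|_{\infty}\sum_{n \in \mathbb{Z}}|f(x+n/\beta)|^{2}.
\end{equation*}
The reflection $y \mapsto -y$ identifies $V_{1/\beta}(\phi^{*})$ with $V_{1/\beta}(\phi)$ and maps sampling at $-x + \alpha\mathbb{Z}$ to sampling at $x + \alpha\mathbb{Z}$ with the same constants, so the conclusion may be phrased directly in terms of $V_{1/\beta}(\phi)$.

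Finally I would use a pointwise fiber decoupling. The assignment $f \mapsto \mathbf{d}(x) := (f(x + n/\beta))_n$ is a unitary isomorphism from $L^{2}(\mathbb{R})$ onto $L^{2}([0, 1/\beta);\ell^{2}(\mathbb{Z}))$, and $\|f\|^{2} = \int_{0}^{1/\beta}\|\mathbf{d}(x)\|^{2}\, dx$. Because both $\sum_k|F_x(\alpha k)|^{2}$ and $\|\mathbf{d}(x)\|^{2}$ are pointwise functions of $x$ and the fiber $\mathbf{d}(x)$ alone, the integrated Gabor inequality $A\|f\|^{2} \leq \sum_{k,m}|\langle f,\cdot\rangle|^{2} \leq B\|f\|^{2}$ for every $f$ is equivalent, via the Parseval identity above, to the pointwise bound $\beta A\|\mathbf{d}(x)\|^{2} \leq \sum_{k}|F_x(\alpha k)|^{2} \leq \beta B\|\mathbf{d}(x)\|^{2}$ for a.e.\ $x$ and every fiber $\mathbf{d}(x) \in \ell^{2}(\mathbb{Z})$. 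Substituting the Riesz bounds to exchange $\|\mathbf{d}(x)\|^{2}$ for $\|F_x\|^{2}$ then produces exactly the stated sampling bounds $\beta A\,\|\Phi_{1/\beta}\|_{\infty}^{-1}$ and $\beta B\,\|\Phi_{1/\beta}\|_{0}^{-1}$. The main obstacle I anticipate is rigorously justifying this pointwise decoupling — a Lebesgue-differentiation-style argument using fibers localised on short intervals $[x_{0}, x_{0}+\epsilon)$ — together with tracking the sign and complex-conjugation conventions in the identification $G_k(x) = F_x(\alpha k)$ so that the ambient shift-invariant space truly has the same periodisation $\Phi_{1/\beta}$ as the original generator $\phi$.
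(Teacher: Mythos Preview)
The paper does not actually provide a proof of this theorem: it is stated in the introduction as a known result, attributed to Janssen \cite{J95} and Ron and Shen \cite{Ron}, and is then used as a black box to derive Theorem~\ref{frame}. So there is no ``paper's own proof'' to compare against.

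That said, your sketch is essentially the standard fiberization argument underlying the Janssen/Ron--Shen duality, and it is correct in outline. The Parseval step producing
\[
\sum_{k,m}\bigl|\langle f,e^{2\pi i\beta m\cdot}\phi(\cdot-\alpha k)\rangle\bigr|^2
=\frac{1}{\beta}\int_0^{1/\beta}\sum_{k}|G_k(x)|^2\,dx
\]
is exactly the Walnut/Janssen representation, and your identification of $G_k(x)$ as a sample value of an element of a shift-invariant space with generator $\phi^{*}$ (hence with the same periodisation $\Phi_{1/\beta}$) is the key observation. The Riesz bounds \eqref{eqn2.5} then convert between $\ell^2$-coefficient norms and $L^2$-norms, which is precisely what produces the factors $\|\Phi_{1/\beta}\|_0^{-1}$ and $\|\Phi_{1/\beta}\|_\infty^{-1}$ in the sampling constants.

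Two small points of care, both of which you already flagged: first, the ``almost every $x$'' in the statement requires the pointwise decoupling step, and a clean way to do this is to note that for each fixed $x$ the map $\mathbf{d}\mapsto F_x$ is a bounded bijection of $\ell^2(\mathbb{Z})$ onto the fiber space, so the integrated inequality over all $f\in L^2(\mathbb{R})$ is equivalent (via measurable selection of fibers, or simply testing against $f$ supported in thin $x$-bands) to the pointwise inequality for a.e.\ $x$ and all $\mathbf{d}\in\ell^2$. Second, the reflection argument passing from $V_{1/\beta}(\phi^{*})$ with sampling set $-x+\alpha\mathbb{Z}$ to $V_{1/\beta}(\phi)$ with sampling set $x+\alpha\mathbb{Z}$ is fine because $\alpha\mathbb{Z}=-\alpha\mathbb{Z}$; just be explicit that the $L^2$-isometry $g\mapsto\overline{g(-\cdot)}$ intertwines the two spaces and preserves both norms and sample moduli.
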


Gabor frames with compactly supported windows play a significant role in Gabor analysis due to their inherent time-frequency localization and they have received a lot of attention in recent years, see \cite{gwole, sign, mystery} and references therein. For a function $\phi$ with $\supp \phi\subseteq [0, L]$, $\mathcal{F}(\phi)$ is always a subset of $\left\{(\alpha,\beta)\in\mathbb{R}_{+}^2: \alpha\beta \le 1~\text{and}~\alpha \le L\right\}.$ The complete frame sets of the characteristic function of an interval \cite{abc, when} and the Haar function \cite{haar} are extremely complicated, in contrast to windows in Feichtinger algebra. In recent years, some progress in \cite{fscf, b3spline, ofsb, sign, olec, counter} has been made to  characterize the frame set for B-splines:
\begin{eqnarray}\label{Qmformula}
Q_1(x):=\chi_{\left[-\tfrac{1}{2},\tfrac{1}{2}\right]}(x)~\text{and}~Q_{m+1}(x):=(Q_m*Q_1)(x),~m\geq1.
\end{eqnarray}

By the Gaussian example, Daubechies initially conjectured in \cite{ID} that the frame set of a positive function with positive Fourier transform is $\{(\alpha, \beta)\in \mathbb{R}^2_+: \alpha\beta < 1\}$ but Janssen \cite{scewhf} disproved it. Gr\"{o}chenig et. al \cite{duke} achieved a breakthrough result and proved that the frame set is $\mathcal{F}(\phi) = \{(\alpha, \beta) \in \mathbb{R}^2_+: \alpha\beta < 1\}$ for any totally positive function $\phi$ of finite type $\ge 2$. Later, the authors in \cite{stsis} proved the same result for any totally positive functions of Gaussian type. Recall that a measurable function $\phi$ on $\mathbb{R}$ is totally positive if for every $n \in\mathbb{N}$ and every two sets of increasing numbers $x_1 < x_2 <\cdots< x_n$ and $y_1 < y_2 <\cdots< y_n$, the determinant of the matrix $[\phi(x_j-y_k)]_{j,k=1,\dots,n}$ is nonnegative. Schoenberg \cite{IJS} proved that the Fourier transform of an integrable totally positive function $\phi$ is of the form
\begin{eqnarray}\label{totallyft}
\widehat{\phi}(w)=ce^{-\eta w^2}e^{-2\pi i vw}\prod\limits_{j=1}^{N}\left(1+2\pi i v_jw\right)^{-1}e^{-2\pi iv_jw}
\end{eqnarray}
with $c>0$, $v, v_j\in\mathbb{R}$, $\eta\ge 0$, $N\in\mathbb{N}\cup\{\infty\}$, and $0<\eta+\sum_{j}v_j^2<\infty$.
A totally positive function $\phi$ is called of finite type if $\eta=0$ and $N\in\mathbb{N}$, of Gaussian type if $\eta\ne0$ and $N\in\mathbb{N}$, and of infinite type if $N=\infty$ in the factorization \eqref{totallyft}. Gr\"{o}chenig conjectured that if $\phi$ is a totally positive function other than the one-sided exponential, then the frame set is  $\mathcal{F}(\phi) = \{(\alpha, \beta) \in \mathbb{R}^2_+: \alpha\beta < 1\}$. 

Let $h_n(x)$ be the $n$-th Hermite function defined by
\begin{eqnarray}\label{hermitef}
h_n(x)=a_n e^{\pi x^2}\dfrac{d^n}{dx^n}e^{-2\pi x^2},~n=0,1,2,3,\dots,
\end{eqnarray}
where $a_n$ is chosen so that $\|h_n\|=1$. Since
$h_n$ belongs to Feichtinger algebra, the frame set $\mathcal{F}(h_n)$ is open in $\mathbb{R}_{+}^2$ and $\mathcal{F}(\phi)\subseteq \{(\alpha, \beta) \in\mathbb{R}^2_+: \alpha\beta< 1\}$. The authors in \cite{hermitegro, mathanna} proved that if $\alpha\beta <\tfrac{1}{n+1}$, then $\mathcal{G}(h_n, \alpha, \beta)$ is a frame. Finally, the authors \cite{Lyu} proved that the frame set of any odd window in Feichtinger algebra cannot contain the hyperbolas $\alpha\beta =\tfrac{p}{p+1}$ for any $p\in\mathbb{N}$. Based on these results, Gr\"{o}chenig \cite{mystery} conjectured that $\mathcal{F}(h_{2n})=\{(\alpha,\beta)\in\mathbb{R}_{+}^2: \alpha\beta < 1\}$ and $\mathcal{F}(h_{2n+1})\subset \{(\alpha,\beta)\in\mathbb{R}_{+}^2: \alpha\beta < 1,~\alpha\beta \ne\tfrac{p}{p+1},~p=1,2,\dots\}$. Later Lemvig \cite{hergabor} disproved it for $h_n$ with $n = 4m + 2$ and $n = 4m + 3$, $m\in\mathbb{N}_0$. 

\subsection{Our contribution.} The main contributions of this paper are listed as follows:
\begin{itemize}
\item[$(i)$] We provide a sufficient condition
for Gabor frames based on the connection between sampling theory in shift-invariant spaces and Gabor analysis (see Theorem \ref{frame}).
\item [$(ii)$] Using our sufficient condition, we establish a new region belonging to the frame set of B-splines, especially for $Q_2$ and $Q_3$. As a byproduct, we confirm that the region  $\left\{\alpha \in [ 2/9, 2/7 ],~\beta \in [4/(2+3\alpha), 2/(1+\alpha)],~\beta > 1\right\}$ is included in $\mathcal{F}(Q_2)$ which was suggested by numerical evidence \cite{ofsb}. Our result also covers the regions from \cite{ olec, counter} (see Figure \ref{bsplinesfig}).
\item [$(iii)$] We discuss the Gabor frame for a window $\phi\in L^1(\mathbb{R})$ such that its Fourier transform is of the form 
\begin{eqnarray}\label{type}
&\widehat{\phi}(w)=\psi(w)\prod\limits_{j=1}^{\infty}\left(1+2\pi i v_jw\right)^{-1}e^{-2\pi iv_jw},~v_j\in\mathbb{R},~0\le\sum_{j}v_j^2<\infty,
\end{eqnarray}
for some $\psi\in L^1(\mathbb{R})$. The window $\phi$ is said to be of \textit{type-I} if $\psi(w)=e^{-\pi w^2}$ and of \textit{type-II} if $\psi(w)=e^{-|w|}$ in \eqref{type}. Note that the functions of type-I are totally positive. We prove that if $\phi$ is of type-I or type-II and $\alpha\beta<1,$ then there exists a $\gamma>0$ depending on $\alpha\beta$ such that $\mathcal{G}(\phi_\gamma,\alpha,\beta)$ forms a frame for $L^2(\mathbb{R})$, where $\phi_\gamma(x)=\sqrt{\gamma}\phi(\gamma x)$ (see Theorems \ref{B1gammalem}, \ref{maintse}). This result is quite similar to \cite{approxgabor, approxgabor1}.
\item[$(iv)$] We obtain a new Gabor frame region for Hermite functions $h_n$ using MATLAB. Based on results from \cite{hergabor, Lyu}, we expect that our frame region for $h_{2n}$ might be close to the frame set (see Figure \ref{hermitefig}).  
\end{itemize}
\section{A sufficient condition for Gabor frames}
Let $\mathcal{A}$ denote the class of real-valued continuous functions $\phi$ satisfying the following conditions:
\begin{itemize}
\item [$(i)$] $\phi$ is differentiable except  at a finite  number of points.
\item [$(ii)$] For some $\epsilon>0$, $\phi^{(s)}(x)=\mathcal{O}(|x|^{-0.5-\epsilon})$ as $x\to\pm\infty$, $s=0,1$.
\item [$(iii)$] For all $h>0$, $\mathop{\rm{\esssup}}\limits_{w\in\left[0,1/h\right]}\ds\sum\limits_{l\in\mathbb{Z}}(w+\tfrac{l}{h})^{2}|\widehat{\phi}(w+\tfrac{l}{h})|^2<\infty$.
\end{itemize}
The Wiener space $W(\mathbb{R})$ is defined as
$$W(\mathbb{R}):=\left\{f ~\text{is continuous on}~\mathbb{R}:~\sum_{n\in\mathbb{Z}}\max\limits_{x\in[0,1]}|f(x+n)|<\infty\right\}.$$ 
If $\phi$ and $\widehat{\phi}$ belong to $W(\mathbb{R})$, then the Poisson summation formula \cite{time} 
\begin{eqnarray}\label{poissonsum}
\sum_{n\in\mathbb{Z}}\phi(x+\nu n)=\dfrac{1}{\nu}\sum_{n\in\mathbb{Z}}\widehat{\phi}\left(\dfrac{n}{\nu}\right)e^{2\pi i nx/\nu},~\nu>0,
\end{eqnarray}
holds for all $x\in\mathbb{R}$ with absolute convergence of both sums. If $\phi\in \mathcal{A}$ is a stable generator for $V_h(\phi)$, then we can show that $V_h(\phi)$ is a reproducing kernel Hilbert space (see, for example, \cite{Liu}). We define
\begin{equation}\label{Bphihs}
B_{\phi,h}(w):=\dfrac{\sum\limits_{l\in\mathbb{Z}}(w+\tfrac{l}{h})^{2}|\widehat{\phi}(w+\tfrac{l}{h})|^2}{\sum\limits_{l\in\mathbb{Z}}|\widehat{\phi}(w+\tfrac{l}{h})|^2}~\text{and}~M_{\phi,h}:=\mathop{\rm{\esssup}}\limits_{w\in\left[0,1/h\right]}B_{\phi,h}(w).
\end{equation}
Let $\phi_\gamma(x)=\sqrt{\gamma}\phi(\gamma x).$ It is easy to check that $M_{\phi_\gamma,h}=\gamma^2 M_{\phi,h\gamma}$ and 
\begin{eqnarray}\label{Msh}
M_{\phi,h}\geq\dfrac{\sum\limits_{l\in\mathbb{Z}}(\tfrac{1}{2h}+\tfrac{l}{h})^{2}|\widehat{\phi}(w+\tfrac{l}{h})|^2}{\sum\limits_{l\in\mathbb{Z}}|\widehat{\phi}(w+\tfrac{l}{h})|^2}\geq\dfrac{1}{4h^{2}}.
\end{eqnarray}
\begin{thm}\label{berns}
If $\phi\in \mathcal{A}$ is a stable generator for $V_{h}(\phi)$,
then
\begin{eqnarray}\label{bern}
\|f^{\prime}\|\leq 2\pi\sqrt{M_{\phi,h}}\|f\|,
\end{eqnarray}
for every $f\in V_{h}(\phi)$. Moreover, the constant $M_{\phi,h}$ depending on $\phi$ and $h$ is sharp. 
\end{thm}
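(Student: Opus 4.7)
The plan is to prove the inequality by transferring everything to the Fourier side and exploiting the periodization of $|\widehat{\phi}|^2$ on intervals of length $1/h$. For $f\in V_h(\phi)$ written as $f=\sum_k d_k\phi(\cdot-hk)$, I would first record that $\widehat{f}(w)=\widehat{\phi}(w)D(hw)$, where $D(v):=\sum_k d_k e^{-2\pi ikv}$ is $1$-periodic and lies in $L^2([0,1])$ since $(d_k)\in\ell^2(\mathbb Z)$. Plancherel gives $\|f\|^2=\int_{\mathbb R}|\widehat{\phi}(w)|^2|D(hw)|^2\,dw$, while the decay conditions (ii) and (iii) on the class $\mathcal A$ ensure that $f\in H^1(\mathbb R)$, so $\widehat{f'}(w)=2\pi iw\widehat{f}(w)$ holds in $L^2$. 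Splitting $\mathbb R=\bigsqcup_{l\in\mathbb Z}[l/h,(l+1)/h]$ and using the $1$-periodicity of $D$ yields
\begin{align*}
\|f'\|^2 &= 4\pi^2\int_0^{1/h}|D(hu)|^2\sum_{l\in\mathbb Z}\bigl(u+\tfrac{l}{h}\bigr)^{2}\bigl|\widehat{\phi}\bigl(u+\tfrac{l}{h}\bigr)\bigr|^{2}\,du,\\
\|f\|^2 &= \int_0^{1/h}|D(hu)|^2\sum_{l\in\mathbb Z}\bigl|\widehat{\phi}\bigl(u+\tfrac{l}{h}\bigr)\bigr|^{2}\,du.
\end{align*}
Factoring $B_{\phi,h}(u)$ out of the first integrand via the definition \eqref{Bphihs} and bounding it pointwise by $M_{\phi,h}$ immediately gives $\|f'\|^2\le 4\pi^2 M_{\phi,h}\|f\|^2$, which is the claimed Bernstein-type inequality.

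For sharpness, I would test the inequality on functions whose spectrum concentrates where $B_{\phi,h}$ nearly attains its essential supremum. Given $\epsilon>0$, set $E_\epsilon:=\{u\in[0,1/h]:B_{\phi,h}(u)>M_{\phi,h}-\epsilon\}$, which has positive Lebesgue measure by definition of essential supremum. Choose $D$ to be the $1$-periodic extension of $\chi_{hE_\epsilon}$ on $[0,1]$; then $D\in L^2([0,1])$, so its Fourier coefficients $(d_k)$ lie in $\ell^2(\mathbb Z)$ and produce a nonzero $f_\epsilon\in V_h(\phi)$. The same periodization identity now reads
\[
\|f_\epsilon'\|^2 = 4\pi^2\int_{E_\epsilon}B_{\phi,h}(u)\sum_{l\in\mathbb Z}\bigl|\widehat{\phi}\bigl(u+\tfrac{l}{h}\bigr)\bigr|^{2}\,du\ge 4\pi^2(M_{\phi,h}-\epsilon)\|f_\epsilon\|^2,
\]
where $\|f_\epsilon\|>0$ because stability of $\phi$ via \eqref{eqn2.5} forces $\sum_l|\widehat{\phi}(u+l/h)|^2$ to be bounded away from zero a.e.\ on $[0,1/h]$. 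Letting $\epsilon\downarrow 0$ shows that $2\pi\sqrt{M_{\phi,h}}$ cannot be replaced by any smaller constant.

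The main technical obstacle is justifying that $f\in H^1(\mathbb R)$ so that the identity $\widehat{f'}(w)=2\pi iw\widehat{f}(w)$ is legitimate in $L^2$. This is precisely the role of condition (iii) on $\mathcal A$: combined with stability it forces $M_{\phi,h}<\infty$, and the periodization computation then shows $w\widehat{f}(w)\in L^2(\mathbb R)$, so $f$ has a distributional derivative in $L^2$. Condition (ii) supplies the pointwise decay needed to apply Poisson summation, to swap integration over $\mathbb R$ with the sum over translates, and to guarantee that $V_h(\phi)$ is a reproducing kernel Hilbert space as recalled before the theorem. Beyond these bookkeeping issues, no further obstacles are anticipated.
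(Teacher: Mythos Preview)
Your argument is correct and is exactly the standard Fourier-side periodization proof that the paper defers to when it writes ``The proof of the above theorem follows similar lines as in the proof of Theorem~1 in \cite{bebenko} (see also \cite{perturb, Liu}).'' Both the inequality (via the ratio $B_{\phi,h}$ and its supremum $M_{\phi,h}$) and the sharpness (by concentrating $|D|^2$ on the set where $B_{\phi,h}$ is near its essential supremum) match the approach in those references, so there is nothing to add.
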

The proof of the above theorem follows similar lines as in the proof of
Theorem 1 in \cite{bebenko} (see also \cite{perturb, Liu}).

Let $\{x_n:n\in\mathbb{Z}\}$, $\cdots<x_{n-1}<x_n<x_{n+1}<\cdots$, be a sampling set with $\lim\limits_{n\rightarrow \pm\infty}x_n=\pm \infty$. The sampling density is measured by the maximal gap between two consecutive samples, \textit{i.e.,} $\delta=\sup\limits_{n}(x_{n+1}-x_n).$ Consider the operator $P:L^2(\mathbb{R})\to V_h(\phi)$ by
\begin{eqnarray}\label{pap2eqn2.6}
(Pf)(x):=\langle f, K_x\rangle,
\end{eqnarray}
where $K_x(t)$ is the reproducing kernel of $V_h(\phi)$. Then $P$ is an orthogonal projection of $L^2(\mathbb{R})$ onto $V_h(\phi)$. Define the approximation operator $\mathrm{A}$ on $V_h(\phi)$ by 
$$\mathrm{A}f=P\left(\sum\limits_{n\in\mathbb{Z}}f(x_n)\chi_{[y_n,y_{n+1}]}\right),~\text{where}~y_n=\dfrac{x_n+x_{n+1}}{2}.$$
By using the same argument as in \cite{rais}, we can easily show the following estimate 
\begin{eqnarray}\label{faf}
\|f-Af\|^2
&\leq&\left\|\sum\limits_{n\in\mathbb{Z}}[f-f(x_n)]\chi_{[y_n,y_{n+1}]}\right\|^2\leq 4\delta^{2}M_{\phi,h}\|f\|^2,
\end{eqnarray}
from Theorem \ref{berns}. If $\delta<\tfrac{1}{2\sqrt{M_{\phi,h}}},$ then  $\|I-A\|_{op}<1$. Consequently, we obtain the following result from the Neumann theorem for the invertibility of an operator.
\begin{thm}\label{4.5}
Let $\phi\in \mathcal{A}$ be a stable generator for $V_h(\phi)$. If $\delta=\sup\limits_{n\in\mathbb{Z}}(x_{n+1}-x_n)<\tfrac{1}{2\sqrt{M_{\phi,h}}}$, then for every $f\in V_{h}(\phi)$
\begin{eqnarray}\label{af}
\|f-\mathrm{A}f\|\leq 2\delta\sqrt{M_{\phi,h}}\|f\|.
\end{eqnarray}
 Consequently, $\mathrm{A}$ is a bounded invertible operator on $V_{h}(\phi)$ with bounds
\begin{eqnarray}
\|\mathrm{A}f\|\leq\left(1+2\delta\sqrt{M_{\phi,h}}\right)\|f\|
\end{eqnarray}
and 
\begin{eqnarray}
\|\mathrm{A}^{-1}f\|\leq\left(1-2\delta\sqrt{M_{\phi,h}}\right)^{-1}\|f\|.
\end{eqnarray}
\end{thm}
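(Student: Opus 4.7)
The plan is to synthesise the approximation estimate already developed in the preamble (inequality \eqref{faf}) with a Neumann series argument. The first displayed inequality of the theorem is literally the square root of the second inequality in \eqref{faf}, so nothing needs to be added for that part; the remaining two bounds follow by viewing $\mathrm{A}=I-(I-\mathrm{A})$ as a small perturbation of the identity on $V_h(\phi)$.

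In more detail, I would first record that $\mathrm{A}$ sends $V_h(\phi)$ into itself, since it is defined as an orthogonal projection onto $V_h(\phi)$. For $f\in V_h(\phi)$ one has $Pf=f$, so $f-\mathrm{A}f = P\bigl(f-\sum_n f(x_n)\chi_{[y_n,y_{n+1}]}\bigr)$, and the orthogonal projection can only decrease the $L^2$-norm; this yields the first inequality of \eqref{faf}. The second inequality of \eqref{faf} would be obtained by a Wirtinger/Poincar\'e-type estimate on each interval of length at most $\delta$, combined with the Bernstein inequality $\|f'\|\le 2\pi\sqrt{M_{\phi,h}}\,\|f\|$ supplied by Theorem \ref{berns}. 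I would treat this computation as already carried out in the preamble in the style of \cite{rais}, so that the bound $\|I-\mathrm{A}\|_{op}\le 2\delta\sqrt{M_{\phi,h}}$ on $V_h(\phi)$ is in hand.

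The standing hypothesis $\delta<1/(2\sqrt{M_{\phi,h}})$ then forces $\|I-\mathrm{A}\|_{op}<1$, so a standard Neumann series argument shows that $\mathrm{A}$ is boundedly invertible on $V_h(\phi)$ with $\mathrm{A}^{-1}=\sum_{k\ge 0}(I-\mathrm{A})^{k}$. The triangle inequality $\|\mathrm{A}\|_{op}\le 1+\|I-\mathrm{A}\|_{op}$ gives the first of the quantitative norm bounds, and summing the geometric series produces $\|\mathrm{A}^{-1}\|_{op}\le (1-\|I-\mathrm{A}\|_{op})^{-1}\le (1-2\delta\sqrt{M_{\phi,h}})^{-1}$, which is the second.

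The only genuinely substantive step here is the approximation estimate \eqref{faf}, in which the Bernstein constant $M_{\phi,h}$ enters both through the derivative bound of Theorem \ref{berns} and through the piecewise-constant interpolation error on intervals of size $\delta$; once this is granted, the invertibility portion is a routine application of Neumann's theorem. The conceptual content of the proof is that the same quantity $M_{\phi,h}$ which controls derivatives in $V_h(\phi)$ also governs sampling perturbations, and it is exactly this double role that converts the Bernstein inequality into an invertibility criterion for the approximation operator $\mathrm{A}$.
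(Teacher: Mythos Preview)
Your proposal is correct and follows essentially the same route as the paper: the paper derives \eqref{faf} via the argument of \cite{rais} combined with the Bernstein inequality of Theorem \ref{berns}, obtains $\|I-\mathrm{A}\|_{op}\le 2\delta\sqrt{M_{\phi,h}}<1$, and then invokes the Neumann theorem exactly as you do to conclude boundedness and invertibility with the stated norm bounds.
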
 
\begin{thm}\label{4.6}
Let $\phi\in \mathcal{A}$ be a stable generator for $V_h(\phi)$. If $\delta=\sup\limits_{n\in\mathbb{Z}}(x_{n+1}-x_n)<\tfrac{1}{2\sqrt{M_{\phi,h}}}$, then for every $f\in V_{h}(\phi)$ 
\begin{eqnarray}\label{bound}
\left(1-2\delta\sqrt{M_{\phi,h}}\right)^{2}\|f\|^2\leq\sum\limits_{n\in\mathbb{Z}}w_n|f(x_n)|^2 \leq \left(1+2\delta\sqrt{M_{\phi,h}}\right)^{2}\|f\|^2,
\end{eqnarray}
where $w_n=(x_{n+1}-x_{n-1})/2.$
\end{thm}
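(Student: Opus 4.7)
The plan is to interpret the weighted sum $\sum_n w_n |f(x_n)|^2$ as the squared $L^2$--norm of an explicit step function and then play that step function against $f$ using the Bernstein-type inequality from Theorem~\ref{berns}.

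Setting $y_n = (x_n + x_{n+1})/2$ as in the hypothesis of Theorem~\ref{4.5}, I would introduce
\[
s_f(x) := \sum_{n\in\mathbb{Z}} f(x_n)\,\chi_{[y_{n-1},y_n]}(x).
\]
Since $[y_{n-1},y_n]$ has length $y_n - y_{n-1} = (x_{n+1}-x_{n-1})/2 = w_n$ and these intervals partition $\mathbb{R}$ almost everywhere, a one-line computation yields
\[
\|s_f\|^2 = \sum_{n\in\mathbb{Z}} w_n |f(x_n)|^2,
\]
which is precisely the quantity to be sandwiched in \eqref{bound}. The problem is thereby reduced to showing that $\|s_f\|$ is close to $\|f\|$.

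The desired closeness is essentially \eqref{faf}, but \emph{before} the projection $P$ is applied. For $x\in[y_{n-1},y_n]$, the fundamental theorem of calculus together with Cauchy--Schwarz gives $|f(x)-f(x_n)|^2 \le |x-x_n|\int_{y_{n-1}}^{y_n}|f'(t)|^2\,dt$; summing over $n$ and invoking Theorem~\ref{berns} produces
\[
\|f-s_f\|^2 \;\le\; 4\delta^2 M_{\phi,h}\,\|f\|^2.
\]
This is exactly the middle expression bounded in \eqref{faf}, and is in fact stronger than the bound on $\|f-\mathrm{A}f\|$, since the step from $\|f-s_f\|$ to $\|f-\mathrm{A}f\|=\|P(f-s_f)\|$ only costs the trivial projection contraction.

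The proof then concludes by the forward and reverse triangle inequalities in $L^2$:
\[
\bigl(1-2\delta\sqrt{M_{\phi,h}}\bigr)\|f\| \;\le\; \|s_f\| \;\le\; \bigl(1+2\delta\sqrt{M_{\phi,h}}\bigr)\|f\|,
\]
where the lower quantity is strictly positive by the standing hypothesis $\delta<\tfrac{1}{2\sqrt{M_{\phi,h}}}$. Squaring delivers \eqref{bound}. There is no real obstacle in the argument; the only point requiring care is the bookkeeping that identifies the length $y_n-y_{n-1}$ of the Voronoi interval attached to $x_n$ with the weight $w_n$ appearing in the statement.
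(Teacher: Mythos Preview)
Your proof is correct and rests on the same two pillars as the paper's: the identification $\sum_n w_n|f(x_n)|^2=\|s_f\|^2$ and the estimate $\|f-s_f\|\le 2\delta\sqrt{M_{\phi,h}}\,\|f\|$ from \eqref{faf}. The upper bound is handled identically in both arguments. For the lower bound you go straight to the reverse triangle inequality $\|s_f\|\ge\|f\|-\|f-s_f\|$, whereas the paper routes through Theorem~\ref{4.5}, writing $\|f\|=\|A^{-1}Af\|\le(1-2\delta\sqrt{M_{\phi,h}})^{-1}\|Ps_f\|\le(1-2\delta\sqrt{M_{\phi,h}})^{-1}\|s_f\|$; your version is therefore a little more self-contained, since it does not invoke the Neumann invertibility of $A$. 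One quantitative caveat: the fundamental theorem plus Cauchy--Schwarz, as you sketch it, actually yields $\|f-s_f\|^2\le\tfrac{\delta^2}{4}\|f'\|^2\le\pi^2\delta^2 M_{\phi,h}\|f\|^2$ rather than $4\delta^2 M_{\phi,h}\|f\|^2$; the constant $4$ in \eqref{faf} comes from a Wirtinger-type inequality as in \cite{rais}. Since you ultimately appeal to \eqref{faf} itself, this does not affect the validity of \eqref{bound}, but the parenthetical rederivation should cite Wirtinger rather than Cauchy--Schwarz if you want the stated constant.
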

\begin{proof}
It follows from Theorem \ref{4.5} that
\begin{align}
\|f\|^2&=\|A^{-1}Af\|^2\nonumber\\
&\le\left(1-2\delta\sqrt{M_{\phi,h}}\right)^{-2}\left\|P\left(\sum_{n\in\mathbb{Z}}f(x_n)\chi_{[y_n,y_{n+1}]}\right)\right\|^2\nonumber\\
&\le\left(1-2\delta\sqrt{M_{\phi,h}}\right)^{-2}\left\|\sum_{n\in\mathbb{Z}}f(x_n)\chi_{[y_n,y_{n+1}]}\right\|^2\nonumber.
\end{align}
Since the characteristic functions $\chi_{[y_n,y_{n+1}]}$ have mutually disjoint support, we have
\begin{eqnarray*}
\left\|\sum\limits_{n\in\mathbb{Z}}f(x_n)\chi_{[y_n,y_{n+1}]}\right\|^2=\sum\limits_{n\in\mathbb{Z}}\int\limits_{y_{n}}^{y_{n+1}}|f(x_n)|^2~dx=\sum\limits_{n\in\mathbb{Z}}|f(x_n)|^2w_n
\end{eqnarray*}
and hence the left hand inequality of \eqref{bound} holds. Using the estimate \eqref{faf}, we get
\begin{align*}
\sum\limits_{n\in\mathbb{Z}}|f(x_n)|^2w_n&\le\left(\|f\|+\left\|\sum\limits_{n\in\mathbb{Z}}[f-f(x_n)]\chi_{[y_n,y_{n+1}]}\right\|\right)^2\\
&\le\left(1+2\delta\sqrt{M_{\phi,h}}\right)^2\|f\|^2.
\end{align*}
\end{proof}
If we choose $x_n=x+\alpha n$ and $h=1/\beta$ in Theorem \ref{4.6}, then $x+\alpha\mathbb{Z}$ is a set of stable sampling for $V_{1/\beta}(\phi)$ with sampling bounds $\tfrac{1}{\alpha} \left(1-2\alpha\sqrt{M_{\phi,1/\beta}}\right)^2$  and $\tfrac{1}{\alpha} \left(1+2\alpha\sqrt{M_{\phi,1/\beta}}\right)^2$ for almost all $x\in\mathbb{R}$. Consequently, we obtain the following result from Theorem \ref{main}.
\begin{thm}\label{frame}
Let $\phi\in \mathcal{A}$ be a stable generator for $V_{1/\beta}(\phi)$. If $0<\alpha<\tfrac{1}{2\sqrt{M_{\phi,1/\beta}}},$ then $\mathcal{G}(\phi,\alpha,\beta)$ forms a frame for $L^2(\mathbb{R})$ with frame bounds 
\begin{eqnarray*}
A(\alpha,\beta)=\tfrac{1}{\alpha}\left(1-2\alpha\sqrt{M_{\phi,1/\beta}}\right)^2\esssup\limits_{\xi\in[0,\beta]}\sum\limits_{k\in\mathbb{Z}}|\widehat{\phi}(\xi+\beta k)|^2,
\end{eqnarray*}
and
\begin{eqnarray*}
B(\alpha,\beta)=\tfrac{1}{\alpha}\left(1+2\alpha\sqrt{M_{\phi,1/\beta}}\right)^2\essinf\limits_{\xi\in[0,\beta]}\sum\limits_{k\in\mathbb{Z}}|\widehat{\phi}(\xi+\beta k)|^2.
\end{eqnarray*}
\end{thm}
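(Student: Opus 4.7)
The plan is exactly the one foreshadowed in the paragraph preceding the statement: specialize Theorem \ref{4.6} to the arithmetic progression $x_n = x + \alpha n$ with $h = 1/\beta$, and then invoke Theorem \ref{main} to translate the resulting sampling inequality into a Gabor frame inequality. All the substantive analytic content --- the Bernstein-type inequality in Theorem \ref{berns}, the Neumann-series approximation estimate in Theorem \ref{4.5}, and the weighted quadrature bound in Theorem \ref{4.6} --- has already been laid down, so what remains is an assembly step.

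First, I would observe that for the equally spaced set $x_n = x + \alpha n$ the maximal gap is $\delta = \alpha$ and each weight simplifies to $w_n = (x_{n+1} - x_{n-1})/2 = \alpha$. The hypothesis $0 < \alpha < 1/(2\sqrt{M_{\phi,1/\beta}})$ is precisely the gap condition of Theorem \ref{4.6} with $h = 1/\beta$. Substituting into \eqref{bound} and pulling the common factor $\alpha$ out of the sum yields
\[
\tfrac{1}{\alpha}\bigl(1 - 2\alpha\sqrt{M_{\phi,1/\beta}}\bigr)^{2} \|f\|^{2} \;\le\; \sum_{n \in \mathbb{Z}} |f(x + \alpha n)|^{2} \;\le\; \tfrac{1}{\alpha}\bigl(1 + 2\alpha\sqrt{M_{\phi,1/\beta}}\bigr)^{2} \|f\|^{2}
\]
for every $f \in V_{1/\beta}(\phi)$ and a.e.\ $x \in \mathbb{R}$, which identifies the sampling bounds for the lattice $x + \alpha \mathbb{Z}$.

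Second, I would unfold the definition of $\Phi_{1/\beta}$ to write $\|\Phi_{1/\beta}\|_{\infty} = \beta \esssup_{\xi \in [0,\beta]} \sum_{k \in \mathbb{Z}} |\widehat{\phi}(\xi + \beta k)|^{2}$, and the analogous identity with $\essinf$ for $\|\Phi_{1/\beta}\|_{0}$. Theorem \ref{main} links the Gabor frame bounds $(A_{G},B_{G})$ to the sampling bounds $(A_{S},B_{S})$ via $A_{S} = \beta A_{G}\|\Phi_{1/\beta}\|_{\infty}^{-1}$ and $B_{S} = \beta B_{G}\|\Phi_{1/\beta}\|_{0}^{-1}$. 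Solving for $(A_{G},B_{G})$ and inserting the sampling bounds from the previous step reproduces exactly the expressions $A(\alpha,\beta)$ and $B(\alpha,\beta)$ in the statement, with the factor $\beta$ from the periodization absorbing cleanly against the $1/\beta$ implicit in $\Phi_{1/\beta}$.

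There is no real obstacle. The only point requiring a little care is the bookkeeping inside Theorem \ref{main}: one must not swap the roles of $\|\Phi_{1/\beta}\|_{\infty}$ and $\|\Phi_{1/\beta}\|_{0}$ when inverting the proportionality, so that the essential supremum correctly ends up in the lower frame bound and the essential infimum in the upper frame bound. Everything else is immediate from the preceding theorems.
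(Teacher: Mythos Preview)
Your proposal is correct and follows exactly the paper's own argument: specialize Theorem \ref{4.6} to the arithmetic progression $x_n=x+\alpha n$ with $h=1/\beta$ (so $\delta=\alpha$, $w_n=\alpha$) to obtain the sampling bounds $\tfrac{1}{\alpha}(1\pm 2\alpha\sqrt{M_{\phi,1/\beta}})^2$, and then apply Theorem \ref{main}. Your unwinding of the $\Phi_{1/\beta}$ factors and the caution about which of $\|\Phi_{1/\beta}\|_\infty$, $\|\Phi_{1/\beta}\|_0$ goes where are both accurate and in fact more detailed than the paper's one-line justification.
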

\subsection{Painless Non-Orthogonal Expansions}
Let $\phi$ be continuous and differentiable except at a finite number of points with $\supp{\phi}=[-\sigma,\sigma]$. If $|{\phi}(w)|>0$ on the interior of $[-\sigma,\sigma]$, then $\widehat{\phi}$ is a stable generator for $V_{1/\alpha}(\widehat{\phi})$ whenever $0<\alpha<2\sigma$. If $f(x)=\sum\limits_{k\in\mathbb{Z}}c_k\widehat{\phi}(x-k/\alpha)$, then it follows from Plancherel’s identity that
\begin{align*}
\|f^{\prime}\|^2=(2\pi)^{2}\int_{-\sigma}^{\sigma}\left|\sum\limits_{k\in\mathbb{Z}}c_k e^{-2\pi ikw/\alpha}w{\phi}(w)\right|^2 dw\le(2\pi\sigma)^{2}\|f\|^2.
\end{align*}
Therefore $M_{\widehat{\phi},1/\alpha}\leq\sigma^2$. We know that $\mathcal{G}(\phi,\alpha, \beta)$ is a frame for $L^2(\mathbb{R})$ with frame bounds $A$ and $B$ if and only if $\mathcal{G}(\widehat{\phi},\beta,\alpha)$ is a frame for $L^2(\mathbb{R})$ with same frame bounds. Consequently, we obtain the following result \cite{painless} as an immediate application of Theorem \ref{frame}.
\begin{cor}
Let $\phi$ be continuous, differentiable except at a finite number of points with support in $[-\sigma,\sigma]$, and $|{\phi}(w)|>0$ on the interior of $[-\sigma,\sigma].$ If $0<\alpha<2\sigma$ and $0<\beta<\tfrac{1}{2\sigma},$ then $\mathcal{G}(\phi, \alpha, \beta)$ is a frame for $L^2(\mathbb{R}).$
\end{cor}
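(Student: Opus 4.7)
The plan is to read the corollary as a direct consequence of Theorem \ref{frame} applied to $\widehat\phi$, and then to transfer the result back to $\phi$ using Fourier duality. Since $\mathcal{F}(\phi)=\mathcal{F}(\widehat\phi)$ with the roles of the two lattice parameters interchanged (as noted in the introduction), it suffices to show that $\mathcal{G}(\widehat\phi,\beta,\alpha)$ is a frame for $L^2(\mathbb{R})$.

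To apply Theorem \ref{frame} with window $\widehat\phi$ and lattice parameters $(\beta,\alpha)$, I would first check that $\widehat\phi$ is a stable generator for $V_{1/\alpha}(\widehat\phi)$. Using $\widehat{\widehat\phi}(x)=\phi(-x)$, the criterion \eqref{eqn2.5} rewrites as a two-sided bound on
\[
\Phi_{1/\alpha}(w)=\alpha\sum_{n\in\mathbb{Z}}|\phi(w+\alpha n)|^{2},\qquad w\in[0,\alpha].
\]
Because $\supp\phi\subseteq[-\sigma,\sigma]$ and $\alpha<2\sigma$, a short case analysis shows that for each $w\in[0,\alpha]$ at least one translate $\phi(w+\alpha n)$ is evaluated inside the open interval $(-\sigma,\sigma)$, where $|\phi|>0$: for $w\in[0,\sigma)$ take $n=0$, and for $w\in[\sigma,\alpha]$ take $n=-1$, which lies in $(-\sigma,\sigma)$ since $\sigma-\alpha>-\sigma$. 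Continuity of $\phi$ and compactness of $[0,\alpha]$ then force $\|\Phi_{1/\alpha}\|_{0}>0$; boundedness of $\phi$ and local finiteness of the sum give $\|\Phi_{1/\alpha}\|_{\infty}<\infty$. Membership of $\widehat\phi$ in the class $\mathcal{A}$ is routine from the compact support and piecewise differentiability of $\phi$.

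With the stable generator property established, the Bernstein-type computation in the paragraph immediately preceding the statement yields $M_{\widehat\phi,1/\alpha}\le\sigma^{2}$, so the hypothesis $\beta<\tfrac{1}{2\sigma}$ forces $\beta<\tfrac{1}{2\sqrt{M_{\widehat\phi,1/\alpha}}}$. Theorem \ref{frame} then produces the frame $\mathcal{G}(\widehat\phi,\beta,\alpha)$, and Fourier duality closes the argument. I do not expect any genuine obstacle here; the only step that requires attention is the lower bound on $\Phi_{1/\alpha}$, where both hypotheses $\alpha<2\sigma$ and strict positivity of $|\phi|$ on $(-\sigma,\sigma)$ are essential, while all the analytic work has already been done in Theorems \ref{berns}--\ref{frame}.
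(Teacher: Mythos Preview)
Your proposal is correct and follows the paper's own argument essentially verbatim: verify that $\widehat{\phi}$ is a stable generator for $V_{1/\alpha}(\widehat{\phi})$ via the positivity of $\sum_{n}|\phi(\cdot+\alpha n)|^{2}$, use the Plancherel computation already displayed before the corollary to obtain $M_{\widehat{\phi},1/\alpha}\le\sigma^{2}$, apply Theorem~\ref{frame} with parameters $(\beta,\alpha)$, and finish by the Fourier duality $\mathcal{G}(\phi,\alpha,\beta)\leftrightarrow\mathcal{G}(\widehat{\phi},\beta,\alpha)$. Your added case analysis for the lower bound on $\Phi_{1/\alpha}$ just spells out what the paper leaves implicit.
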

\section{Gabor frames with B-splines} 
The B-spline $Q_m$ defined in \eqref{Qmformula} belongs to $C_c^{m-2}(\mathbb{R})$ with support  $\left[-\tfrac{m}{2},\tfrac{m}{2}\right]$ and 
its Fourier transform is given by $\widehat{Q_m}(w)=\left(\tfrac{\sin \pi w}{\pi w}\right)^m.$ The B-splines and their derivatives can be computed by the following formulae
$$Q_m(x)=\dfrac{1}{(m-1)!}\displaystyle\sum_{j=0}^{m}(-1)^j
\binom{m}{j} \left(x+\tfrac{m}{2}-j\right)_+^{m-1},~x_+=\max(0,x),~m\geq 2,$$
and 
$$\hspace{-1.55cm} Q_m^\prime(x)=Q_{m-1}(x+\tfrac{1}{2})-Q_{m-1}(x-\tfrac{1}{2}),~m >2,$$
respectively. Dai and Sun \cite{abc} provided the complete frame set of $Q_1$. In this section, we mainly focus on the B-splines of order $\ge 2$. When $\beta\in\mathbb{R}\setminus\{2,3,4,\dots\},$  $Q_m$ is a stable  generator for $V_{1/\beta}(Q_m).$ 
To find a Gabor frame region for $Q_2$, we write explicitly 
\begin{align*}
Q_2(x)=
\begin{cases}
1+x~&\text{if}~x\in[-1,0],\\
1-x~&\text{if}~x\in[0, 1],\\
0~&\text{otherwise},
\end{cases}~\text{and}~
Q_4(x)=
\begin{cases}
    \dfrac{(x+2)^3}{6}~&\text{if}~x\in[-2,-1],\\
    \dfrac{-3x^3-6x^2+4}{6}~&\text{if}~x\in[-1,0],\\
    \dfrac{3x^3-6x^2+4}{6}~&\text{if}~x\in[0,1],\\
    \dfrac{(2-x)^3}{6}~&\text{if}~x\in[1,2],\\
    0~&\text{otherwise}.
\end{cases}
\end{align*}
Throughout this section, we denote $y=2\pi w/\beta$ and $u=\cos{y}.$ 
Using the Poisson summation formula, we get 
\begin{align*}
B_{Q_m,1/\beta}(w)&=-\dfrac{1}{4\pi^2}\dfrac{\sum\limits_{n=-\lceil m\beta \rceil}^{\lceil m\beta \rceil}Q_{2m}^{\prime\prime}(\tfrac{n}{\beta})e^{2\pi in w/\beta}}{\sum\limits_{n=-\lceil m\beta \rceil}^{\lceil m\beta \rceil}Q_{2m}(\tfrac{n}{\beta})e^{2\pi in w/\beta}}\\
&=\dfrac{1}{4\pi^2}\dfrac{\sum\limits_{n=-\lceil m\beta \rceil}^{\lceil m\beta \rceil}{[2Q_{2m-2}(\tfrac{n}{\beta})-Q_{2m-2}(1+\tfrac{n}{\beta})-Q_{2m-2}(\tfrac{n}{\beta}-1)]}e^{2\pi in w/\beta}}{Q_{2m}(0)+2\sum\limits_{n=1}^{\lceil m\beta \rceil}Q_{2m}(\tfrac{n}{\beta})\cos{ny}}\\
&=\dfrac{1}{2\pi^2}\dfrac{c+\sum\limits_{n=1}^{\lceil m\beta \rceil}{[2Q_{2m-2}(\tfrac{n}{\beta})-Q_{2m-2}(1+\tfrac{n}{\beta})-Q_{2m-2}(\tfrac{n}{\beta}-1)]}\cos{n y}}{Q_{2m}(0)+2\sum\limits_{n=1}^{\lceil m\beta \rceil}Q_{2m}(\tfrac{n}{\beta})\cos{n y}},
\end{align*}
where $c=Q_{2m-2}(0)-Q_{2m-2}(1)$. When $m=2$ and $0<\beta<2$, we have 
\begin{align}\label{Q2bw}
B_{Q_2,1/\beta}(w)=\dfrac{1}{4\pi^2}\dfrac{N_\beta(w)}{D_\beta(w)},
\end{align}
where
\begin{align*}
N_\beta(w)&=1+\sum\limits_{n=1}^3\left[2Q_2\left(\dfrac{n}{\beta}\right)-Q_2\left(1-\dfrac{n}{\beta}\right)\right]\cos{ny}
\end{align*}
and
\begin{eqnarray*}
D_\beta(w)=\dfrac{1}{3}+\sum\limits_{n=1}^3Q_4\left(\dfrac{n}{\beta}\right)\cos{n y}.
\end{eqnarray*}
To find the exact value of $M_{Q_2,1/\beta}$ for $0<\beta<2,$ we need some well-known results from the theory of distribution of the zeros of polynomials. Let $V(a_0, a_1,\dots,a_n)$ denote the number of sign changes in the reduced sequence obtained by ignoring all vanishing elements of a finite sequence of real numbers $a_0, a_1,\dots,a_n$.  
For a real polynomial $f$ of degree $n$ and an interval $I$, let $\mathcal{N}_fI$ denote the number of zeros of $f$ (counting multiplicity) in $I$. Furthermore,
$$V_f(x):=V(f(x),f^\prime(x),\dots, f^{(n)}(x)).$$
\begin{thm}[\textbf{Budan-Fourier}]\cite{atop}\label{FB}
If $f$ is a real polynomial of degree $n$, then for any interval $(a,b]$,
$$\mathcal{N}_f(a,b]=V_f(a)-V_f(b)-2k$$
for some $k\in\mathbb{N}_0.$
\end{thm}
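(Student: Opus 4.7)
The plan is to track how $V_f(x)$ evolves as $x$ traverses $[a,b]$. Since $f,f',\dots,f^{(n)}$ are real polynomials with $f^{(n)}$ a nonzero constant, each derivative has only finitely many zeros, so $V_f$ is piecewise constant on $[a,b]$ with possible jumps only at the finitely many points $c$ where some $f^{(i)}(c)=0$. It therefore suffices to evaluate $V_f(c-\epsilon)-V_f(c+\epsilon)$ for small $\epsilon>0$ at each such $c$ and sum these jumps over $c\in(a,b]$; the convention that vanishing entries are ignored in the definition of $V_f$ handles any potential issue at the endpoint $a$.

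Fix such a $c$ and decompose the list $(f,f',\dots,f^{(n)})$ into maximal \emph{runs} of consecutive derivatives vanishing at $c$. For a run $f^{(i)}(c)=\cdots=f^{(i+s-1)}(c)=0$ with $f^{(i-1)}(c)\ne 0$ (or $i=0$) and $f^{(i+s)}(c)\ne 0$, the Taylor expansion
\[
f^{(i+j)}(c\pm\epsilon)=\frac{(\pm\epsilon)^{s-j}}{(s-j)!}\,f^{(i+s)}(c)+O(\epsilon^{s-j+1}),\qquad 0\le j\le s-1,
\]
completely pins down the signs of the interior entries of the run at $c\pm\epsilon$. A direct sign-variation count on the augmented block $(f^{(i-1)}(c\pm\epsilon),\dots,f^{(i+s)}(c\pm\epsilon))$ yields the dichotomy: the run contributes an even nonnegative integer to $V_f(c-\epsilon)-V_f(c+\epsilon)$ when $i\ge 1$, and $s$ plus an even nonnegative integer when $i=0$, in which case $c$ is a zero of $f$ of multiplicity exactly $s$.

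Summing the contributions over all runs at all critical points $c\in(a,b]$ gives
\[
V_f(a)-V_f(b)=\sum_{c\in(a,b]}\operatorname{mult}_c(f)+2k=\mathcal{N}_f(a,b]+2k
\]
for some $k\in\mathbb{N}_0$, which rearranges to the Budan--Fourier identity. The main obstacle is the run-by-run sign count underlying the dichotomy: one must verify, separately for $s$ even and $s$ odd, that the number of sign changes in the prescribed monomial pattern $(\pm\epsilon)^{s-j}$ combined with the boundary signs $\sgn f^{(i-1)}(c)$ and $\sgn f^{(i+s)}(c)$ differs by the claimed amount. It is cleanest to isolate this as a short elementary lemma on finite real sequences, after which the remainder of the proof reduces to bookkeeping over runs and critical points.
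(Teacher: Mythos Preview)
The paper does not prove the Budan--Fourier theorem; it is quoted as a classical result with a citation to Rahman and Schmeisser, \emph{Analytic Theory of Polynomials}, so there is no in-paper argument to compare against. Your outline is the standard textbook proof and is correct. The essential points are exactly the ones you identify: $V_f$ is piecewise constant with jumps only at zeros of some $f^{(i)}$, and a local Taylor analysis of each maximal vanishing run shows that the jump $V_f(c^-)-V_f(c^+)$ equals the multiplicity of $c$ as a root of $f$ (coming from the run with $i=0$, if any) plus an even nonnegative contribution from each interior run. The one point worth making explicit, since you gesture at it only briefly, is why the half-open interval $(a,b]$ is the right one: because $\sgn f^{(i)}(c+\epsilon)=\sgn f^{(i+s)}(c)$ for every entry in a run, one has $V_f(c)=V_f(c^+)$ at every point, so $V_f(a)=V_f(a^+)$ and $V_f(b)=V_f(b^+)$ automatically, and the telescoping sum $\sum_{c\in(a,b]}\bigl(V_f(c^-)-V_f(c^+)\bigr)$ collapses to $V_f(a)-V_f(b)$ without any endpoint correction.
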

\begin{algo}\cite{atop}
Let $f$ be a real polynomial of degree $n$. Define the sequence of polynomials $f_0, f_1,\dots,f_n$ such that
\begin{align*}
f_0(x)=&f(x),\\
f_1(x)=&f^\prime(x),\\
f_{j+1}(x)=&-\mathrm{rem}(f_{j-1}(x),f_j(x)),~j=1,\dots,n-1.
\end{align*}
If $f(a)f(b)\ne 0$, then 
$$\mathcal{N}_f[a,b]=V(f_0(a),f_1(a),\dots, f_n(a))-V(f_0(b),f_1(b),\dots, f_n(b)).$$
\end{algo}
\begin{thm}
For each fixed $0<\beta<2,$ $B_{Q_2,1/\beta}(w)$ attains its maximum at $w=\tfrac{\beta}{2}$ in $[0,\beta]$. Consequently, if $0<\alpha<\tfrac{1}{2\sqrt{M_{Q_2,1/\beta}}}$, then $\mathcal{G}(Q_2,\alpha,\beta)$ forms a frame for $L^2(\mathbb{R}),$ where
\begin{align}\label{MQ2less2}
M_{Q_2,1/\beta}=\dfrac{1}{4\pi^2}\dfrac{1-2Q_2\left(\tfrac{1}{\beta}\right)+Q_2\left(1-\tfrac{1}{\beta}\right)-Q_2\left(1-\tfrac{2}{\beta}\right)+Q_2\left(1-\tfrac{3}{\beta}\right)}{\tfrac{1}{3}-Q_4\left(\tfrac{1}{\beta}\right)+Q_4\left(\tfrac{2}{\beta}\right)-Q_4\left(\tfrac{3}{\beta}\right)}.
\end{align}
\end{thm}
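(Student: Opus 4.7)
The plan is to reduce the maximization of $B_{Q_2,1/\beta}(w)$ on $[0,\beta]$ to a sign question for a low-degree polynomial on $[-1,1]$, and then to apply Sturm's algorithm (equivalently, the Budan--Fourier theorem) to resolve that sign question.

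\textbf{Step 1 (symmetry and Chebyshev substitution).} I would first use the identity $\cos(n\cdot 2\pi(\beta-w)/\beta)=\cos(ny)$ to conclude that $B_{Q_2,1/\beta}$ is symmetric about $w=\beta/2$, so it suffices to analyse it on $[0,\beta/2]$. Setting $u=\cos y=\cos(2\pi w/\beta)$, which bijects $[0,\beta/2]$ onto $[-1,1]$, and writing $\cos(ny)=T_n(u)$ via the Chebyshev polynomials, the representation \eqref{Q2bw} becomes
\begin{equation*}
B_{Q_2,1/\beta}(w)=\frac{P(u)}{4\pi^{2}\,Q(u)},
\end{equation*}
where $P,Q\in\mathbb{R}[u]$ have degree at most $3$ and coefficients that are elementary (piecewise polynomial) functions of $\beta$. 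The target point $w=\beta/2$ corresponds to $u=-1$.

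\textbf{Step 2 (reduction to a polynomial inequality).} A short computation using the Poisson summation formula together with $Q_4=Q_2\ast Q_2$ shows $\Phi_{1/\beta}=2D_\beta$, so the stability condition \eqref{eqn2.5} forces $Q(u)>0$ for every $u\in[-1,1]$. Consequently the desired inequality $B_{Q_2,1/\beta}(w)\le B_{Q_2,1/\beta}(\beta/2)$ is equivalent to
\begin{equation*}
r(u):=Q(-1)P(u)-P(-1)Q(u)\le 0\qquad\text{for all }u\in[-1,1].
\end{equation*}
Because $r(-1)=0$, we can factor $r(u)=(u+1)\,s(u)$ with $\deg s\le 2$, reducing the claim to verifying $s(u)\le 0$ throughout $[-1,1]$.

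\textbf{Step 3 (Sturm's algorithm).} This is the main computational step, and the chief obstacle. I would apply Sturm's algorithm (as stated in the excerpt) to $s$ on $[-1,1]$ to certify that $s$ has no sign change in the interior, and check $s(\pm 1)\le 0$ directly. The support conditions on $Q_2$ and $Q_4$ cause different terms in $P$ and $Q$ to vanish according as $\beta$ lies in $(0,\tfrac{1}{2}]$, $(\tfrac{1}{2},1]$, $(1,\tfrac{3}{2}]$, or $(\tfrac{3}{2},2)$, so the coefficients of $s$, viewed as functions of $\beta$, take different closed forms on each sub-range. A separate Sturm sequence therefore has to be computed and analysed on each sub-interval, and the bookkeeping required to keep the sign counts uniform in $\beta$ is what makes the proof non-trivial.

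\textbf{Step 4 (explicit value and frame consequence).} Once Step~3 is established, substituting $y=\pi$ (that is, $u=-1$) into \eqref{Q2bw} and using $\cos(n\pi)=(-1)^{n}$ collapses the finite trigonometric sums to the alternating form appearing in \eqref{MQ2less2}, yielding the stated closed-form expression for $M_{Q_2,1/\beta}=B_{Q_2,1/\beta}(\beta/2)$. The Gabor frame assertion then follows immediately by invoking Theorem~\ref{frame} with $\phi=Q_2$ and $h=1/\beta$.
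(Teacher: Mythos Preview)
Your plan is sound and closely parallels the paper's argument: both split into the four sub-ranges $(0,\tfrac12]$, $(\tfrac12,1]$, $(1,\tfrac32]$, $(\tfrac32,2)$ dictated by the supports of $Q_2$ and $Q_4$, and both reduce the question to a polynomial sign problem in $u=\cos y$ settled by Budan--Fourier/Sturm with $\beta$-dependent coefficients. The difference lies in the reduction itself. The paper establishes the stronger fact that $B_{Q_2,1/\beta}$ is \emph{increasing} on $[0,\beta/2]$, by showing $(N_\beta' D_\beta - N_\beta D_\beta')(w)\ge 0$; after expressing $\sin ny$ through $\sin y$ and Chebyshev polynomials this becomes $K_\beta(u)\sin y\ge 0$ with $\deg K_\beta$ up to~$4$ in the hardest range $3/2<\beta<2$. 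Your direct comparison $r(u)=Q(-1)P(u)-P(-1)Q(u)=(u+1)s(u)$ instead yields $\deg s\le 2$ in every case, so your Sturm step is strictly lighter; the price is that you obtain only the location of the maximum rather than full monotonicity, which is all the theorem actually requires. Your identification of Step~3---keeping the sign counts uniform in $\beta$ across each sub-interval---as the real obstacle matches the paper exactly, and Steps~2 and~4 are handled there just as you describe.
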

\begin{proof}
To prove our result, we show that $B_{Q_2,1/\beta}(w)$ is an increasing function in $[0,\beta/2].$ Since 
$$B_{Q_2,1/\beta}^\prime(w)=\dfrac{1}{4\pi^2}\dfrac{(N_\beta^\prime D_\beta -N_\beta D_\beta^\prime)(w)}{D_{\beta}^{2}(w)},$$ it is enough to show that $(N_\beta^\prime D_\beta -N_\beta D_\beta^\prime)(w)\ge 0$ in $[0,\beta/2].$ We use the following identities in our proof.
\begin{subequations}\label{identity}
 \begin{align}
 \sin{2y}&=2u\sin{y},&\sin{3y}&=(4u^2-1)\sin{y},\label{eq11}\\
\sin{4y}&=(8u^3-4u)\sin{y},&
\sin{5y}&=(16u^4-12u^2+1)\sin{y}.\label{eq12}
 \end{align}
\end{subequations} 
When $0<\beta\le 1/2$, it is clear that $N_\beta(w)=1$ and $D_\beta(w)=\tfrac{1}{3}$. Thus $M_{Q_2,1/\beta}={3}/{4\pi^2}.$
For $1/2<\beta<2$, we divide the proof into three cases.
\newline
\underline{\textbf{Case: 1}} When $1/2<\beta\le 1,$ we have
\begin{align*}
N_\beta(w)=1-Q_2\left(1-\dfrac{1}{\beta}\right)\cos{y}
=1-\left(2-\dfrac{1}{\beta}\right)\cos{y}
\end{align*}
and
\begin{align*}
D_\beta(w)=\dfrac{1}{3}+Q_4\left(\dfrac{1}{\beta}\right)\cos{y}=\dfrac{1}{3}+\dfrac{(2\beta-1)^3}{6\beta^3}\cos{y}.
\end{align*}
In this case, $(N_\beta^\prime D_\beta-N_\beta D_\beta^\prime)(w)$
\begin{align*}
&=\dfrac{2\pi}{\beta}\left[\left(2-\dfrac{1}{\beta}\right)\sin{y}\left[\dfrac{1}{3}+\dfrac{(2\beta-1)^3}{6\beta^3}\cos{y}\right]+\left[1-\left(2-\dfrac{1}{\beta}\right)\cos{y}\right]\dfrac{(2\beta-1)^3}{6\beta^3}\sin{y}\right]\\
&=\dfrac{2\pi}{\beta}\left[\dfrac{2\beta-1}{3\beta}+\dfrac{(2\beta-1)^3}{6\beta^3}\right]\sin{y}\\
&=\dfrac{\pi(2\beta-1)(6\beta^2-4\beta+1)}{3\beta^4}\sin{y}\ge 0
\end{align*}
in $[0,\beta/2]$ because the quadratic polynomial $6\beta^2-4\beta+1$ is positive.
\newline
\underline{\textbf{Case: 2}} When $1<\beta\le 3/2,$ we have
\begin{align*}
N_\beta(w)&=1+\left[2Q_2\left(\dfrac{1}{\beta}\right)-Q_2\left(1-\dfrac{1}{\beta}\right)\right]\cos{y}-Q_2\left(1-\dfrac{2}{\beta}\right)\cos{2y}\\
&=1+\left(2-\dfrac{3}{\beta}\right)\cos{y}-\left(2-\dfrac{2}{\beta}\right)\cos{2y}\\
&=\dfrac{1}{\beta}\left[\beta+(2\beta-3)\cos{y}-2(\beta-1)\cos{2y}\right]
\end{align*}
and
\begin{align*}
\hspace{-1cm} D_\beta(w)&=\dfrac{1}{3}+Q_4\left(\dfrac{1}{\beta}\right)\cos{y}+Q_4\left(\dfrac{2}{\beta}\right)\cos{2y}\\
&=\dfrac{1}{3}+\dfrac{4\beta^3-6\beta+3}{6\beta^3}\cos{y}+\dfrac{4}{3}\left(\dfrac{
\beta-1}{\beta}\right)^3\cos{2y}\\
&=\dfrac{1}{6\beta^3}[2\beta^3+(4\beta^3-6\beta+3)\cos{y}+8(\beta-1)^3\cos{2y}].
\end{align*}
Consequently, $(N_\beta^\prime D_\beta)(w)$
\begin{align*}
&=\dfrac{\pi}{3\beta^5}\left\{-(2\beta-3)\sin{y}+4(\beta-1)\sin{2y}\right\}\times\\&\hspace{4cm}\left\{2\beta^3+(4\beta^3-6\beta+3)\cos{y}+8(\beta-1)^3\cos{2y}\right\}\\
&=\dfrac{\pi}{3\beta^5}\big[-(4\beta^4-6\beta^3)\sin{y}+8(\beta^4-\beta^3)\sin{2y}+32(\beta-1)^4\sin{2y}\cos{2y}\\&\hspace{5cm}-(8\beta^4-12\beta^3-12\beta^2+24\beta-9)\sin{y}\cos{y}\\&\hspace{5.2cm}+4(4\beta^4-4\beta^3-6\beta^2+9\beta-3)\sin{2y}\cos{y}\\&\hspace{4.7cm}-8(2\beta^4-9\beta^3+15\beta^2-11\beta+3)\sin{y}\cos{2y}\big]
\end{align*}
and $(N_\beta D_\beta^\prime)(w)$
\begin{align*}
&=\dfrac{\pi}{3\beta^5}\left[\beta+(2\beta-3)\cos{y}-2(\beta-1)\cos{2y}\right]\times\\&\hspace{5cm}\left[-(4\beta^3-6\beta+3)\sin{y}-16(\beta-1)^3\sin{2 y}\right]\\
&=\dfrac{\pi}{3\beta^5}\big[-(4\beta^4-6\beta^2+3\beta)\sin{y}-16\beta(\beta-1)^3\sin{2 y}\\&\hspace{6 cm}-(8\beta^4-12\beta^3-12\beta^2+24\beta-9)\sin{y}\cos{y}\\&\hspace{5.5 cm}-16(2\beta^4-9\beta^3+15\beta^2-11\beta+3)\sin{2y}\cos{y}\\&\hspace{1.3 cm}+(8\beta^4-8\beta^3-12\beta^2+18\beta-6)\sin{y}\cos{2y}+32(\beta-1)^4\sin{2y}\cos{2y}\big].
\end{align*}
Applying the identities \eqref{eq11}, we obtain
$\dfrac{3\beta^5}{\pi}(N_\beta^\prime D_\beta-N_\beta D_\beta^\prime)(w)$
\begin{align}\label{case3}
&=(6\beta^3-6\beta^2+3\beta)\sin{y}+(24\beta^4-56\beta^3+48\beta^2-16\beta)\sin{2 y}\nonumber\\&\hspace{4.55cm}+(-24\beta^4+80\beta^3-108\beta^2+70\beta-18)\sin{y}\cos{2 y}\nonumber\\&\hspace{4.5cm}+(48\beta^4-160\beta^3+216\beta^2-140\beta+36)\sin{2y}\cos{y}\nonumber\\
&=(6\beta^3-6\beta^2+3\beta)\sin{y}+(24\beta^4-56\beta^3+48\beta^2-16\beta)\sin{2 y}\nonumber\\&\hspace{4.7cm}+(-12\beta^4+40\beta^3-54\beta^2+35\beta-9)(\sin{3 y}-\sin{y})\nonumber\\&\hspace{4.5cm}+(24\beta^4-80\beta^3+108\beta^2-70\beta+18)(\sin{3 y}+\sin{y})\nonumber\\
&=a_1\sin{y}+a_2\sin{2y}+a_3\sin{3 y}\nonumber\\
&=(a_1+2a_2u+a_3(4u^2-1))\sin{y},
\end{align}
where $ a_1,  a_2$, and $a_3$ are polynomials in $\beta$ given by 
\begin{align*}
a_1&=36\beta^4-114\beta^3+156\beta^2-102\beta+27,\\
a_2&=24\beta^4-56\beta^3+48\beta^2-16\beta,\\
a_3&=12\beta^4-40\beta^3+54\beta^2-35\beta+9.
\end{align*}
Let $P_\beta(u)=4a_3u^2+2a_2u+a_1-a_3.$
Then \eqref{case3} becomes
\begin{align}\label{case3f}
 \dfrac{3\beta^5}{\pi}(N_\beta^\prime D_\beta-N_\beta D_\beta^\prime)(w)=P_\beta(u)\sin{y}.   
\end{align}
Differentiating $P_\beta(u)$ two times with respect to $u$, we get $$P_\beta^\prime(u)=8a_3u+2a_2~\text{and}~P_\beta^{\prime\prime}(u)=8a_3.$$ 
Consider
\begin{align*}
P_\beta(1)&=3a_3+2a_2+a_1=120\beta^4-346\beta^3+414\beta^2-239\beta+54,  \\
P_\beta^\prime(1)&=8a_3+2a_2
=24(\beta-1)(6\beta^3-12\beta^2+10\beta-3),\\
P_\beta^{\prime}(1)&=P_\beta^{\prime\prime}(-1)=8a_3=8(\beta-1)(12\beta^3-28\beta^2+26\beta-9),\\
P_\beta(-1)&=3a_3-2a_2+a_1=(2-\beta)(-24\beta^3+74\beta^2-74\beta+27),\\
P_\beta^{\prime}(-1)&=-8a_3+2a_2=8(\beta-1)(-6\beta^3+20\beta^2-22\beta+9).
\end{align*}
For all $\beta\in(1,3/2]$, the sign of the sequence $\{P_\beta, P_\beta^\prime, P_\beta^{\prime\prime}\}$ is determined at the end points of the interval $[-1, 1]$ using Budan-Fourier Theorem. Please see Table \ref{case2} in the Appendix. We observe from Table \ref{case2} that 
\begin{align*}
\mathcal{N}_{P_\beta}[-1, 1]\le V_{P_\beta}(-1)-V_{P_\beta}(1)=0,
\end{align*}
using Budan-Fourier Theorem. Since $P_\beta(1)>0$
and $\sin{y}\ge 0$ in $[0,\pi]$, we conclude from \eqref{case3f} that $(N_\beta^\prime D_\beta-N_\beta D_\beta^\prime)(w)$ is nonnegative in $[0,\beta/2]$.
\newline
\underline{\textbf{Case: 3}} When $3/2< \beta<2$, we have 
$N_\beta(w)$
\begin{align*}
&=1+\left[2Q_2\left(\dfrac{1}{\beta}\right)-Q_2\left(1-\dfrac{1}{\beta}\right)\right]\cos{y}-Q_2\left(1-\dfrac{2}{\beta}\right)\cos{2y}-Q_2\left(1-\dfrac{3}{\beta}\right)\cos{3y}
\end{align*}
\begin{align*}
&=1+\left(2-\dfrac{3}{\beta}\right)\cos{y}-\left(2-\dfrac{2}{\beta}\right)\cos{2y}-\left(2-\dfrac{3}{\beta}\right)\cos{3y}\\
&=\dfrac{1}{\beta}\left[\beta+(2\beta-3)\cos{y}-(2\beta-2)\cos{2y}-(2\beta-3)\cos{3y}\right]
\end{align*}
and 
\begin{align*}
D_\beta(w)&=\dfrac{1}{3}+Q_4\left(\dfrac{1}{\beta}\right)\cos{y}+Q_4\left(\dfrac{2}{\beta}\right)\cos{2y}+Q_4\left(\dfrac{3}{\beta}\right)\cos{3y}\\
&=\dfrac{1}{3}+\dfrac{4\beta^3-6\beta+3} 
 {6\beta^3}\cos{y}+\dfrac{4}{3}\left(\dfrac{
\beta-1}{\beta}\right)^3\cos{2y}+\dfrac{
(2\beta-3)^3}{6\beta^3}\cos{3y}\\
&=\dfrac{1}{6\beta^3}\left[2\beta^3+(4\beta^3-6\beta+3)\cos{y}+8(\beta-1)^3\cos{2y}+(2\beta-3)^3\cos{3y}\right].
\end{align*}
Let us denote $A=4\beta^3-6\beta+3$, $B=8(\beta-1)^3$, $ C=(2\beta-3)^3$, $M=2\beta-3$, and $R=2-2\beta$. Then
\begin{eqnarray*}
(N_\beta^\prime D_\beta)(w)=\dfrac{\pi}{3\beta^5}(-M\sin{y}-2R\sin{2y}+3M\sin{3y})\\&\hspace{-2cm}\times(2\beta^3+A\cos{y}+B\cos{2y}+C\cos{3y})
\end{eqnarray*}
and
\begin{align*}
(N_\beta D_\beta^\prime)(w)=\dfrac{\pi}{3\beta^5}(\beta+M\cos{y}+R\cos{2y}-M\cos{3y})\\&\hspace{-1.6cm}\times(-A\sin{y}-2B\sin{2y}-3C\sin{3y})
\end{align*}
Applying the identities \eqref{identity}, we obtain
$\dfrac{3\beta^5}{\pi}(N_\beta^\prime D_\beta-N_\beta D_\beta^\prime)(w)$
\begin{align}\label{case4}
&=(A\beta-2\beta^3 M)\sin{y}+(2\beta B-4\beta^3 R)\sin{2y}+(3\beta C+6\beta^3 M)\sin{3y}\nonumber\\&\hspace{0.9cm}+(AR-BM)\sin{y}\cos{2y}-(A+C)M\sin{y}\cos{3y}+2(BM-AR)\sin{2y}\cos{y}\nonumber\\&-2(BM+CR)\sin{2y}\cos{3y}+3(A+C)M\sin{3y}\cos{y}+3(BM+CR)\sin{3y}\cos{2y}\nonumber\\
&=(A\beta-2\beta^3 M)\sin{y}+(2\beta B-4\beta^3 R)\sin{2y}+(3\beta C+6\beta^3 M)\sin{3y}\nonumber\\&+(BM-AR)(2\sin{2y}\cos{y}-\sin{y}\cos{2y})+(A+C)M(3\sin{3y}\cos{y}-\sin{y}\cos{3y})\nonumber\\&\hspace{6.6cm}+(BM+CR)(3\sin{3y}\cos{2y}-2\sin{2y}\cos{3y})\nonumber\\
&=(A\beta-2\beta^3 M)\sin{y}+(2\beta B-4\beta^3 R)\sin{2y}+(3\beta C+6\beta^3 M)\sin{3y}\nonumber\\&\hspace{3.5cm}+(BM-AR)(\tfrac{1}{2}\sin{3y}+\tfrac{3}{2}\sin{y})+(A+C)M(\sin{4y}+2\sin{2y})\nonumber\\&\hspace{8.9cm}+(BM+CR)(\tfrac{1}{2}\sin{5y}+\tfrac{5}{2}\sin{y})\nonumber\\
&=b_1\sin{y}+b_2\sin{2y}+b_3\sin{3y}+b_4\sin{4y}+b_5\sin{5y}\nonumber\\
&=[b_1+2b_2u+b_3(4u^2-1)+b_4(8u^3-4u)+b_5(16u^4-12u^2+1)]\sin{y},
\end{align}
where $b_1,  b_2, b_3, b_4$, and $b_4$ are polynomials in $\beta$ given by 
\begin{align*}
b_1&=A\beta-2\beta^3 M+\tfrac{3}{2}(BM-AR)+\tfrac{5}{2}(BM+CR)\\
&=36\beta^4-74\beta^3+6\beta^2+83\beta-48,
\end{align*}
\begin{align*}
b_2&=2\beta B-4\beta^3 R+2(A+C)M=72\beta^4-272\beta^3+456\beta^2-400\beta+144,\\
b_3&=3\beta C+6\beta^3 M+\tfrac{1}{2}(BM-AR)=48\beta^4-166\beta^3+216\beta^2-116\beta+9,\\
b_4&=(A+C)M=24\beta^4-108\beta^3+204\beta^2-192\beta+72,\\
b_5&=\tfrac{1}{2}(BM+CR)=8\beta^3-30\beta^2+37\beta-15.
\end{align*}
Let $K_\beta(u):=16b_5u^4+8b_4u^3+(4b_3-12b_5)u^2+(2b_2-4b_4)u+b_1-b_3+b_5.$ Then \eqref{case4} becomes
\begin{align}\label{case4f}
\dfrac{3\beta^5}{\pi}(N_\beta^\prime D_\beta-N_\beta D_\beta^\prime)(w)=K_\beta(u)\sin{y}.
\end{align}
Differentiating $K_\beta(u)$ four times with respect to $u$, we get 
\begin{align*}
K_\beta^\prime(u)&=64b_5u^3+24b_4u^2+2(4b_3-12b_5)u+2b_2-4b_4,&K_\beta^{\prime\prime\prime}(u)&=384b_5u+48b_4,\\
K_\beta^{\prime\prime}(u)&=192b_5u^2+48b_4u+2(4b_3-12b_5),&
K_\beta^{(4)}(u)&=384b_5.
\end{align*}
Consider
\begin{align*}
K_\beta(1)&=5b_5+4b_4+3b_3+2b_2+b_1=4(105\beta^4-377\beta^3+558\beta^2-412\beta+120),\\
K_\beta^\prime(1)&=40b_5+20b_4+8b_3+2b_2
=8(126\beta^4-464\beta^3+690\beta^2-511\beta+150),\\
K_\beta^{\prime\prime}(1)&=168b_5+48b_4+8b_3=8(192\beta^4-646\beta^3+810\beta^2-491\beta+126),\\
K_\beta^{\prime\prime\prime}(1)&=384b_5+48b_4=192(\beta-1)(2\beta-3)(3\beta^2+2\beta-4),\\
K_\beta^{(4)}(1)&=K_\beta^{(4)}(-1)=384b_5=384(\beta-1)(2\beta-3)(4\beta-5)\\
K_\beta(-1)&=5b_5-4b_4+3b_3-2b_2+b_1=12(2-\beta)^3(5\beta-7),\\
K_\beta^\prime(-1)&=-40b_5+20b_4-8b_3+2b_2=8(2-\beta)(-30\beta^3+152\beta^2-254\beta+141),\\
K_\beta^{\prime\prime}(-1)&=168b_5-48b_4+8b_3=8(2-\beta)(96\beta^3-458\beta^2+722\beta-369),\\
K_\beta^{\prime\prime\prime}(-1)&=-384b_5+48b_4=192(\beta-1)(\beta-2)(2\beta-3)(3\beta-8).
\end{align*}
We now determine the sign of the sequence $\{K_\beta, K_\beta^\prime, K_\beta^{\prime\prime}, K_\beta^{\prime\prime\prime}\}$ at the end points of the interval $[-1, 1]$. It is easy to check that $K_\beta^{(4)}(1)$, $K_\beta(-1)$ and $K_\beta^{\prime\prime\prime}(-1)$ are nonnegative in $[3/2,2].$ For other elements except $K_\beta^{(4)}(1)$, $K_\beta(-1)$ and $K_\beta^{\prime\prime\prime}(-1)$, please see Table \ref{bigtable} in the Appendix. To determine the sign of $K^{\prime}_\beta(-1)$ and $K^{\prime\prime}_\beta(-1)$ in $(3/2,2]$, we use Sturm's algorithm. Define 
\begin{align*}
\psi_0(\beta)&=-30\beta^3+152\beta^2-254\beta+141,&f_0(\beta)&=96\beta^3-458\beta^2 + 722\beta-369,\\
\psi_1(\beta)&=-90\beta^2+304\beta-254,&
f_1(\beta)&=288\beta^2-916\beta + 722,\\
\psi_2(\beta)&=-\mathrm{rem}(\psi_0(\beta),\psi_1(\beta))&f_2(\beta)&=-\mathrm{rem}(f_0(\beta),f_1(\beta))\\
&=-\dfrac{244}{135}\beta+\dfrac{269}{135},&&=\dfrac{457}{108}\beta-\dfrac{2965}{216},\\
\psi_3(\beta)&=-\mathrm{rem}(\psi_1(\beta),\psi_2(\beta)),&f_3(\beta)&=-\mathrm{rem}(f_1(\beta),f_2(\beta)),\\
&=\dfrac{840645}{29768},&&=-\dfrac{163164888}{208849}.
\end{align*}
Then 
\begin{align*}
\mathcal{N}_{\psi_0}[1.5,2]&=V(\psi_0(1.5),\psi_1(1.5),\psi_2(1.5),\psi_3(1.5))-V(\psi_0(2),\psi_1(2),\psi_2(2),\psi_3(2))\\
&=V\left(\tfrac{3}{4}, -\tfrac{1}{2}, -\tfrac{97}{135}, \tfrac{840645}{29768})-V(1, -6, -\tfrac{73}{45}, \tfrac{840645}{29768}\right)\\
&=2-2=0
\end{align*}
and 
\begin{align*}
\mathcal{N}_{f_0}[1.5,2]&=V(f_0(1.5),f_1(1.5),f_2(1.5),f_3(1.5))-V(f_0(2),f_1(2),f_2(2),f_3(2))\\
&=V\left(\tfrac{15}{2},-4,-\tfrac{797}{108},-\tfrac{163164888}{208849})-V(11,42,-\tfrac{379}{72},-\tfrac{163164888}{208849}\right)\\
&=1-1=0.
\end{align*}
Since $\psi_0(1.5)=0.75>0$ and $f_0(1.5)=7.5>0$, $\psi_0$ and $f_0$ are positive in  $[3/2,2]$ which implies that $K^{\prime}_\beta(-1)$ and $K^{\prime\prime}_\beta(-1)$ are nonnegative in $(3/2,2]$. Hence 
$$\mathcal{N}_{K_\beta}[-1, 1]\le V_{K_\beta}(-1)-V_{K_\beta}(1)=0,$$
using Budan-Fourier Theorem.
Since $K_\beta(-1)>0$ and $\sin{y}\ge 0$ in $[-1,1]$, we conclude from \eqref{case4f} that $(N_\beta^\prime D_\beta-N_\beta D_\beta^\prime)(w)$ is nonnegative.
\end{proof}
We can also prove an analog of Theorem \ref{MQ2less2} for $Q_3$ using the Budan-Fourier theorem and Sturm's algorithm. We leave it as an
exercise for an interested reader. The MATLAB code for the Budan-Fourier theorem and Sturm's algorithm is listed in the Appendix. We verified that $B_{Q_2,1/\beta}(w)$ and $B_{Q_3,1/\beta}(w)$ do not necessarily attain their maximum at $w=\beta/2$ for all $\beta>2.$  Based on Theorem \ref{frame}, we find a new region belonging to the frame set for $Q_2$ and $Q_3$ using MATLAB. Please refer to Figure \ref{bsplinesfig}. 

Figure \ref{bsplinesfig} confirms that the region $\left\{\alpha \in \left[ \tfrac{2}{9},\tfrac{2}{7}\right],~\beta \in \left[\tfrac{4}{2+3\alpha}, \tfrac{2}{1+\alpha}\right],~\beta > 1\right\}$ is included in $\mathcal{F}(Q_2)$ which was suggested by numerical evidence \cite{ofsb}. Our results also cover the regions from \cite{olec, counter}. The authors in \cite{approxgabor} proved that for any $\alpha, \beta >0$ such that $\alpha\beta<1$, there exists a positive integer $m(\alpha, \beta)$ such that the function
$$g_m (x) :=\sqrt{\dfrac{m}{12}}Q_m\left(\sqrt{\dfrac{m}{12}}x\right)$$
generates a Gabor frame $\mathcal{G}(g_m,\alpha,\beta)$ whenever $m \ge m(\alpha,\beta)$. In contrast to the approach in \cite{approxgabor}, we will choose $Q_m(\gamma \cdot)$ instead of $g_m$ but the order of the B-spline is fixed. For $m\ge 2,$ the authors in \cite{sign} proved that if $\alpha,\beta> 0$ such that $\alpha\beta < 1,$ then $\mathcal{G}(Q_{m},\alpha,\beta)$ forms a Gabor frame for $L^2(\mathbb{R})$ if there exists a $k\in\mathbb{N}$ such that
\begin{eqnarray*}
  1/m < \beta < 2/m,~m/2 \le \alpha k < 1/\beta.  
\end{eqnarray*}
Using the aforementioned result and the painless non-orthogonal expansion, one can prove the following result. 
\begin{thm}\label{QN}
Let $m\ge 2$ and $\alpha, \beta >0.$ If $\alpha\beta<1,$ then there exists a $\gamma>0$ depending on $\alpha\beta$ such that $\mathcal{G}(Q_{m}(\gamma\cdot),\alpha,\beta)$ forms a frame for $L^2(\mathbb{R}).$ 
\end{thm}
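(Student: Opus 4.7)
My plan is to reduce, via a one-parameter unitary dilation, to the rectangular regime covered by the painless non-orthogonal expansion corollary from Section~2. The starting observation is the standard scaling identity for Gabor systems: for any $\gamma>0$, the system $\mathcal{G}(Q_m(\gamma\cdot),\alpha,\beta)$ is a frame for $L^2(\mathbb{R})$ if and only if $\mathcal{G}(Q_m,\gamma\alpha,\beta/\gamma)$ is. This is seen by the substitution $y=\gamma x$, which unitarily intertwines the two systems, and the crucial feature is that the product of lattice parameters is invariant: $(\gamma\alpha)(\beta/\gamma)=\alpha\beta$.

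Next I would invoke the painless non-orthogonal expansion. The B-spline $Q_m$ is continuous, piecewise polynomial (hence differentiable off a finite set), supported on $[-m/2,m/2]$, and strictly positive on the open interval $(-m/2,m/2)$. Therefore the Corollary in Section~2 applies with $\sigma=m/2$, and gives that $\mathcal{G}(Q_m,\alpha',\beta')$ is a frame whenever $0<\alpha'<m$ and $0<\beta'<1/m$.

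Writing $\alpha'=\gamma\alpha$ and $\beta'=\beta/\gamma$ converts these two conditions into $\gamma<m/\alpha$ and $\gamma>m\beta$, that is, $\gamma\in(m\beta,\,m/\alpha)$. This open interval is nonempty precisely when $m\beta<m/\alpha$, i.e., when $\alpha\beta<1$, which is exactly the hypothesis. Any $\gamma$ chosen from this interval therefore makes $\mathcal{G}(Q_m,\gamma\alpha,\beta/\gamma)$ a frame, and by the dilation equivalence, so is $\mathcal{G}(Q_m(\gamma\cdot),\alpha,\beta)$.

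Structurally, the content reduces to the simple geometric fact that the painless rectangle $(0,m)\times(0,1/m)$ meets every hyperbola $\alpha'\beta'=c$ with $c<1$, while dilation parametrizes motion along such hyperbolas. I do not foresee a real technical obstacle: the dilation identity is routine, and every hypothesis of the painless corollary is manifestly satisfied by $Q_m$. The sharper sufficient condition from \cite{sign} quoted just before the statement could alternatively be used to pin $\gamma$ into a more explicit range (e.g.\ pushing $\beta/\gamma$ into the strip $(1/m,2/m)$ rather than below $1/m$), but for the existence assertion itself the painless expansion alone suffices.
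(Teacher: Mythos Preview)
Your argument is correct and follows the same route the paper indicates: reduce by dilation to a fixed lattice for $Q_m$ and land inside the region covered by the painless non-orthogonal expansion corollary (with $\sigma=m/2$). The paper phrases its hint as ``using the aforementioned result [from \cite{sign}] \emph{and} the painless non-orthogonal expansion,'' but as you observe in your last paragraph, the painless rectangle $(0,m)\times(0,1/m)$ already meets every hyperbola $\alpha'\beta'=c<1$, so the existence claim follows from that corollary alone; the \cite{sign} condition is only needed if one wants $\gamma$ in a different explicit range.
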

\begin{figure}[H]
 \centering 
 \begin{subfigure}[b]{0.465\textwidth}
\centering
\includegraphics[width=\textwidth]{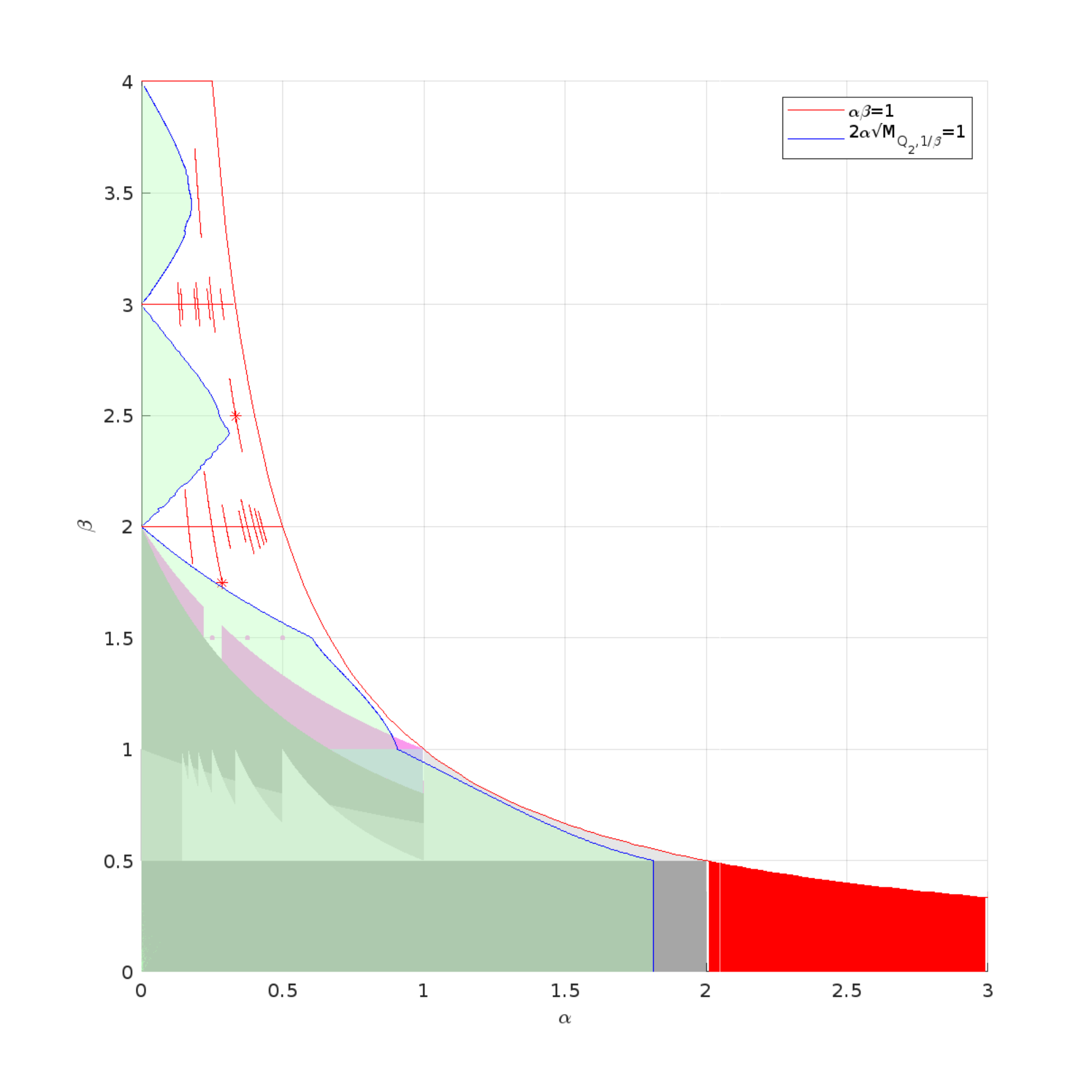}
\caption{}
\end{subfigure}
\begin{subfigure}[b]{0.49\textwidth}
         \centering
\includegraphics[width=\textwidth]{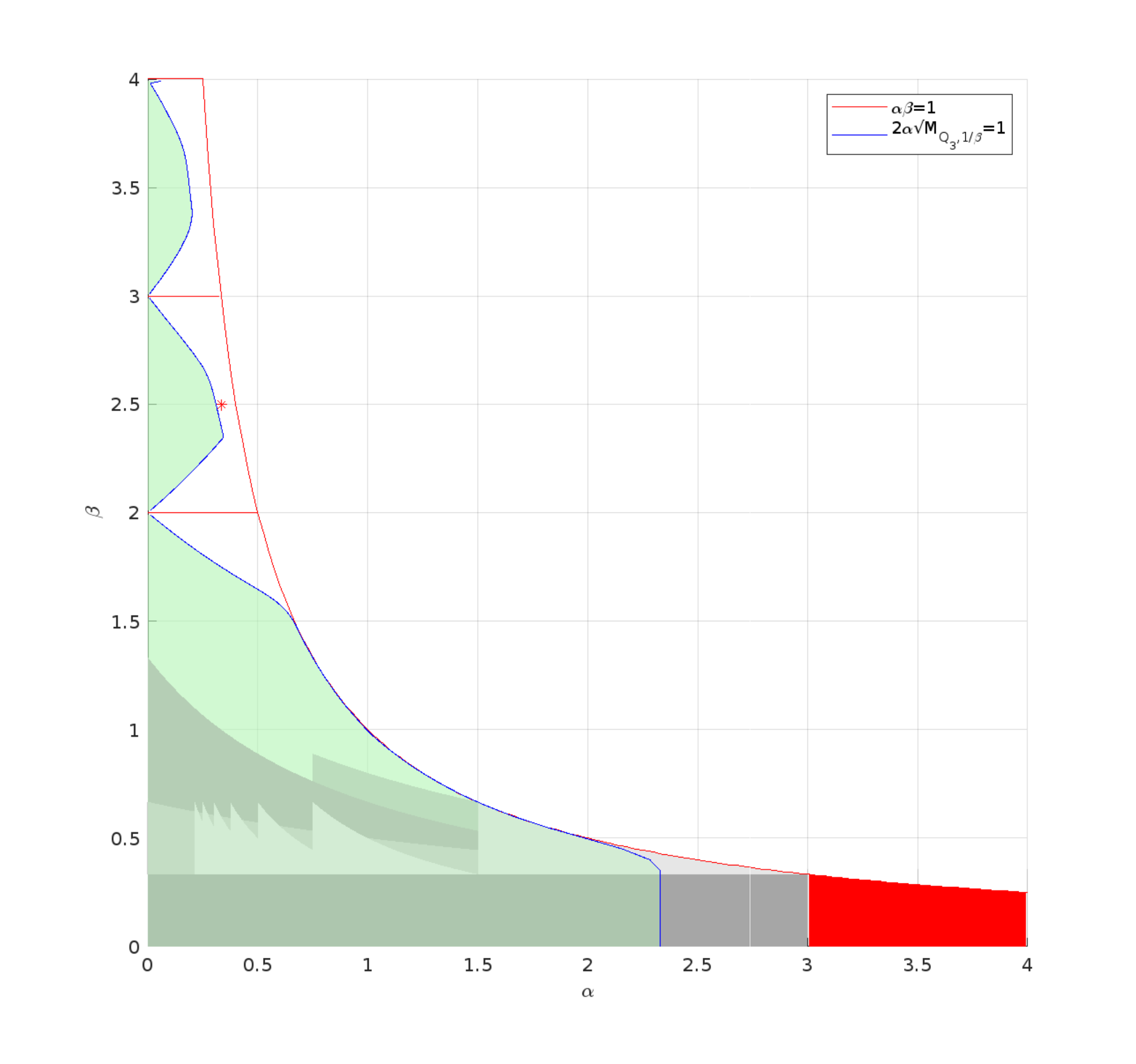}
\caption{}
\end{subfigure}
\caption{Sketches (A) and (B) represent some of the known results on the frame set for $Q_2$ and $Q_3$ respectively. The red color indicates the points $(\alpha, \beta)$ not to be in the frame set. The pink region for $Q_2$ was proved to be in $\mathcal{F}(Q_2)$ in \cite{ofsb}. The other shaded regions which are results from \cite{fscf, sign, olec, painless, counter} indicate the frame property. In each sketch, Theorem \ref{frame} asserts that the greenish region belongs to the frame set.}
\label{bsplinesfig}
\end{figure}
\section{Gabor frames with totally positive functions of type-I}
Consider the Gaussian function 
\begin{align*}
g(w):=e^{-\pi w^2},~w\in\mathbb{R}.
\end{align*}
To find the maximum of $B_{g_\gamma,1/\beta}(w)$, let us introduce the functions
$$H_{\rho}(w)=\sum_{n\in\mathbb{Z}}(w+n)^2e^{-\tfrac{2\pi}{\rho^2}(w+n)^2},~ F_{\rho}(w)=\sum_{n\in\mathbb{Z}}e^{-\tfrac{2\pi}{\rho^2}(w+n)^2},~G_{\rho}(w)=\dfrac{H_{\rho}(w)}{F_{\rho}(w)},$$ and the Jacobi theta function 
$$\Theta(w; t)=\sum\limits_{k\in\mathbb{Z}}e^{-\pi k^2 t}e^{2\pi ikw};~(w,t)\in\mathbb{R}\times\mathbb{R}_+.$$ It is easy to show that $F_{\rho}(w)$, $H_{\rho}(w)$, and $G_{\rho}(w)$ are 1-periodic positive symmetric functions about $w=\tfrac{1}{2}.$
\begin{lem}\cite{jacobi}\label{positiveQ}
Let $Q(w;t)=-\dfrac{\tfrac{\partial}{\partial w}\Theta(w; t)}{\sin 2\pi w}$ and $t>0$ fixed. Then $Q(w;t)$ is an even function of $w$ with period 1 and all its values are positive. Furthermore, for any $w\in\mathbb{R}$ and any $t> 0$, we have
$$A(t) \le Q(w; t) \le B(t),$$
where 
\begin{eqnarray*}
A(t)=
\begin{cases}
t^{-3/2}e^{-\tfrac{\pi}{4t}}, &0< t< 1,\\
(1-\tfrac{1}{3000})4\pi e^{-\pi t},&1\le t,
\end{cases} \text{and}~B(t)=
\begin{cases}
t^{-3/2},&0< t<1,\\
(1+\tfrac{1}{3000})4\pi e^{-\pi t},&1\le t.
\end{cases}
\end{eqnarray*}
\end{lem}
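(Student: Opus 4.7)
The plan is to handle the structural claims (evenness, $1$-periodicity, positivity) via a logarithmic-derivative formula extracted from the Jacobi triple product, and then prove the quantitative bounds separately in the two regimes $t\ge 1$ and $0<t<1$, using the two complementary series representations of $\Theta$.

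Evenness and $1$-periodicity of $Q(\cdot;t)$ are immediate from the corresponding symmetries of $\Theta(\cdot;t)$ and $\sin 2\pi w$. For positivity, I would start from the Jacobi triple product $\Theta(w;t)=\prod_{n\ge 1}(1-q^{2n})(1+q^{2n-1}e^{2\pi iw})(1+q^{2n-1}e^{-2\pi iw})$ with $q=e^{-\pi t}$, and differentiate $\log\Theta$ in $w$ to obtain
\[Q(w;t)=4\pi\,\Theta(w;t)\sum_{n=1}^{\infty}\frac{q^{2n-1}}{|1+q^{2n-1}e^{2\pi iw}|^2},\]
which is a product of manifestly positive quantities.

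For the regime $t\ge 1$, the small parameter $q=e^{-\pi t}\le e^{-\pi}$ makes the defining Fourier series
\[Q(w;t)=4\pi\sum_{k=1}^{\infty}k\,e^{-\pi k^2 t}\,\frac{\sin 2\pi kw}{\sin 2\pi w}\]
tame: the $k=1$ term is exactly $4\pi e^{-\pi t}$, and the tail is controlled via $|\sin 2\pi kw/\sin 2\pi w|\le k$ by $4\pi e^{-\pi t}\sum_{k\ge 2}k^2 e^{-\pi(k^2-1)t}$. A direct numerical estimate shows this last sum is less than $1/3000$ at $t=1$, and term-by-term monotonicity in $t$ extends the bound to all $t\ge 1$, producing the constants $1\mp 1/3000$. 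For the regime $0<t<1$, the Jacobi imaginary transform yields
\[Q(w;t)=\frac{2\pi}{t^{3/2}\sin 2\pi w}\sum_{k\in\mathbb{Z}}(w+k)\,e^{-\pi(w+k)^2/t},\]
in which the prefactor $t^{-3/2}$ already fixes the correct scale. For the upper bound I would use the Chebyshev identity $\sin 2\pi kw/\sin 2\pi w=U_{k-1}(\cos 2\pi w)\le k$ in the defining series to reduce to $Q(w;t)\le Q(0;t)=4\pi\sum_{k\ge 1}k^2 e^{-\pi k^2 t}$, and then evaluate this last sum via Poisson summation applied to $-\tfrac{1}{\pi}\partial_t\Theta(0;t)$, producing $t^{-3/2}$ plus exponentially small negative corrections. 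For the lower bound, pairing $k$ with $-k-1$ in the Poisson sum and retaining the $k=0$ pair, combined with $e^{-\pi w^2/t}\ge e^{-\pi/(4t)}$ on $|w|\le 1/2$ and the exponentially smaller contribution of the remaining pairs, yields $A(t)=t^{-3/2}e^{-\pi/(4t)}$.

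The main obstacle will be pinning down the numerical constants $1\mp 1/3000$ precisely (requiring a controlled geometric tail estimate starting from $t=1$) and controlling signs in the $k=0$ pair uniformly in $w\in(0,1/2)$ via the elementary inequality $\log((1-w)/w)\le(1-2w)/w$; once the appropriate Chebyshev identity (large $t$ and the upper bound) and the Poisson pairing (lower bound for small $t$) are in hand, the structural scale inequalities $Q(w;t)\le t^{-3/2}$ and $Q(w;t)\ge t^{-3/2}e^{-\pi/(4t)}$ follow with substantial slack, since $A(t)$ is not sharp (the true minimum of $Q(\cdot;t)$ is of order $t^{-5/2}e^{-\pi/(4t)}$ near $w=1/2$).
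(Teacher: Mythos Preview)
The paper does not give a proof of this lemma at all: it is quoted verbatim from Montgomery's paper \emph{Minimal theta functions} (the citation \texttt{[jacobi]} in the lemma header), and the authors only \emph{use} the bounds in the subsequent Lemma~\ref{posF}. So there is no ``paper's own proof'' to compare your proposal against.

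That said, your sketch is essentially a reconstruction of Montgomery's argument and is sound in outline. A few remarks. The structural claims and positivity via the triple product are fine. For $t\ge 1$ your tail estimate is exactly right: the dominant contribution to $\sum_{k\ge 2}k^2e^{-\pi(k^2-1)t}$ at $t=1$ is $4e^{-3\pi}\approx 3.22\times 10^{-4}<1/3000$, and monotonicity in $t$ does the rest. For $0<t<1$ your upper bound via $|U_{k-1}|\le k$ and then the differentiated Jacobi identity is correct; writing $2\pi\sum_{k\in\mathbb{Z}}k^2e^{-\pi k^2 t}=t^{-3/2}\bigl[1+2\sum_{k\ge 1}e^{-\pi k^2/t}(1-2\pi k^2/t)\bigr]$ shows the bracket is $<1$ since $2\pi k^2/t>1$ for $t<1$. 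For the lower bound, your pairing strategy in the Poisson representation is the right idea, but be careful at $w=1/2$, where both numerator and denominator of $Q$ vanish and the $k=0$ and $k=-1$ terms in the Poisson sum cancel: the limit is governed by the derivative, and your side remark that the true minimum is of order $t^{-5/2}e^{-\pi/(4t)}$ is in fact incorrect---the limit at $w=1/2$ stays of order $t^{-3/2}e^{-\pi/(4t)}$ (the $\sin 2\pi w$ in the denominator absorbs one power of $t^{-1}$ from the difference quotient). This does not affect the validity of the stated bound $A(t)$, which is indeed not sharp but holds uniformly.
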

\begin{lem}\label{posF}
For each $0<\rho<2$, $G_{\rho}(w)$ attains its maximum at $w=\tfrac{1}{2}$ in $[0,1]$.
\end{lem}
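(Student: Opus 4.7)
By the $1$-periodicity and symmetry of $G_\rho$ about $w=1/2$ recorded just before the lemma, it is enough to prove that $G_\rho$ is non-decreasing on $[0,1/2]$; equivalently, that
\[
N(w)\;:=\;H_\rho'(w)F_\rho(w)-H_\rho(w)F_\rho'(w)\;\ge\;0\qquad\text{for } w\in[0,1/2].
\]

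The first step of the plan is to eliminate $H_\rho$ in favour of $F_\rho$. Writing $c:=2\pi/\rho^2$ and $T_k(w):=\sum_{n\in\mathbb{Z}}(w+n)^k e^{-c(w+n)^2}$, termwise differentiation of the defining series gives $T_0'=-2cT_1$ and $T_1'=T_0-2cT_2$, hence $H_\rho=T_2=\tfrac{1}{2c}F_\rho+\tfrac{1}{4c^2}F_\rho''$. Plugging this in and cancelling yields the clean Wronskian identity
\[
N(w)=\tfrac{1}{4c^2}\bigl(F_\rho\,F_\rho'''-F_\rho'\,F_\rho''\bigr)(w).
\]
Applying Poisson summation, $F_\rho(w)=\tfrac{\rho}{\sqrt 2}\,\Theta(w;\rho^2/2)$, so the lemma reduces to the theta-function inequality
\[
S(w;t)\;:=\;\Theta(w;t)\Theta_{www}(w;t)-\Theta_w(w;t)\Theta_{ww}(w;t)\;\ge\;0,\quad w\in[0,1/2],\ t:=\tfrac{\rho^2}{2}\in(0,2).
\]

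For the second step I would invoke the preceding Jacobi lemma, which supplies $\Theta_w(w;t)=-Q(w;t)\sin 2\pi w$ with $Q>0$ and sharp two-sided bounds $A(t)\le Q\le B(t)$. Differentiating,
\[
\Theta_{ww}=-Q_w\sin 2\pi w-2\pi Q\cos 2\pi w,\qquad \Theta_{www}=(4\pi^2 Q-Q_{ww})\sin 2\pi w-4\pi Q_w\cos 2\pi w.
\]
Substituting into $S$ produces an explicit combination of $\Theta,Q,Q_w,Q_{ww}$ with $\sin 2\pi w$- and $\cos 2\pi w$-weights; it vanishes at $w=0$ and $w=1/2$ (both endpoints being critical points of $G_\rho$), as one expects.

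The main technical obstacle is the interior sign verification. The plan is to control $|Q_w|$ and $|Q_{ww}|$ by differentiating the series representation of $Q$ and then to dominate the negative contributions to $S$ by the positive term $4\pi^2 Q\,\Theta\sin^2 2\pi w$, using positivity of $\Theta$ (via the Jacobi triple product) together with the explicit $A(t),B(t)$ bounds from Lemma \ref{positiveQ}. The restriction $\rho<2$ (i.e.\ $t<2$) is expected to surface precisely here, since for larger $t$ the destabilising $Q_{ww}\Theta\sin 2\pi w$ piece can outweigh the positive contribution; accordingly one anticipates a case split $0<t<1$ versus $1\le t<2$ mirroring the one in Lemma \ref{positiveQ}. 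Should this direct estimate prove unwieldy, an alternative is to study the ratio $R(w;t):=\Theta_{ww}/\Theta=4\pi\,\partial_t\log\Theta(w;t)$ (via the heat equation $4\pi\,\Theta_t=\Theta_{ww}$) and prove monotonicity of $w\mapsto R(w;t)$ on $[0,1/2]$ directly from log-convexity properties of $\Theta$ in its defining product.
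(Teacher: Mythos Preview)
Your Wronskian reduction $N=\tfrac{1}{4c^2}(F_\rho F_\rho'''-F_\rho'F_\rho'')$ via $H_\rho=\tfrac{1}{2c}F_\rho+\tfrac{1}{4c^2}F_\rho''$ is correct, and the passage to $S(w;t)=\Theta\Theta_{www}-\Theta_w\Theta_{ww}\ge 0$ follows immediately. But from that point on the proposal is only a plan: you intend to bound $|Q_w|,|Q_{ww}|$ and absorb the negative pieces, yet Lemma~\ref{positiveQ} controls $Q$ alone, the derivatives of $Q$ carry extra powers of $k$ in their Fourier series, and it is not at all clear that the crude domination by $4\pi^2Q\,\Theta\sin^2 2\pi w$ survives as $t\uparrow 2$. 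The heat-equation/log-convexity fallback is likewise a hope rather than an argument. The hard inequality is therefore not established.

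The paper closes it by a different and rather specific trick. Written in moment sums $T_k=\sum_n(w+n)^k e^{-c(w+n)^2}$, your $N$ equals $2T_0T_1+2c(T_1T_2-T_0T_3)$; the first piece is $-\tfrac{1}{c}F_\rho F_\rho'\ge0$ on $[0,\tfrac12]$, and all the work sits in $I_2:=T_1T_2-T_0T_3=\sum_{j,l}l(w+j)^2e^{-c[(w+j)^2+(w+j+l)^2]}$. The key is a \emph{second} Poisson summation, in $j$ for fixed $l$: because the exponent carries two Gaussians, one lands on $Q(\,\cdot\,;\rho^2/4)$ rather than $Q(\,\cdot\,;\rho^2/2)$, and splitting $l$ by parity gives
\[
I_2(w)=\tfrac{\rho^3}{8\pi}\sin 2\pi w\,\bigl[h_1\,Q(\tfrac12-w;\tfrac{\rho^2}{4})-h_2\,Q(w;\tfrac{\rho^2}{4})\bigr],
\]
with $h_1=\sum_{l\ge1}(2l-1)^2e^{-\pi(2l-1)^2/\rho^2}$ and $h_2=\sum_{l\ge1}(2l)^2e^{-4\pi l^2/\rho^2}$. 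One application of $A(t)\le Q\le B(t)$ at $t=\rho^2/4<1$ then reduces positivity to $e^{-\pi/\rho^2}h_1>h_2$, which the leading terms ($e^{-2\pi/\rho^2}$ versus $4e^{-4\pi/\rho^2}$) deliver for $\rho<2$. No derivative of $Q$ is ever needed; this double-Poisson step is the idea your outline is missing.
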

\begin{proof}
 We first show that $F_{\rho}(w)$ is a decreasing function in $[0,\tfrac{1}{2}].$ Since the Fourier transform of $e^{-\tfrac{2\pi t^2}{\rho^2}}$ is $\dfrac{\rho}{\sqrt{2}}e^{-\tfrac{\pi\rho^2w^2}{2}},$ 
\begin{eqnarray}\label{Frho}
F_{\rho}(w)=\dfrac{\rho}{\sqrt{2}}\sum\limits_{n\in\mathbb{Z}}e^{-\tfrac{\pi\rho^2n^2}{2}}e^{2\pi i nw}=\dfrac{\rho}{\sqrt{2}}\Theta(w; \tfrac{\rho^2}{2}),
\end{eqnarray}
using the Poisson summation formula. Differentiating \eqref{Frho} with respect to $w$, we get
\begin{align}
F^{\prime}_{\rho}(w)&=-\dfrac{4\pi}{\rho^2}\sum\limits_{n\in\mathbb{Z}}(w+n)e^{-\tfrac{2\pi}{\rho^2}(w+n)^2}\nonumber\\
&=-\dfrac{4\pi\rho}{\sqrt{2}}\sum\limits_{n\in\mathbb{N}}ne^{-\tfrac{\pi\rho^2n^2}{2}}\sin{2\pi nw}\nonumber\\
&=-\dfrac{\rho}{\sqrt{2}}Q(w; \tfrac{\rho^2}{2})\sin{2\pi w}.\label{f'}
\end{align}
Since $\sin{2\pi w}\ge0$ in $[0,\tfrac{1}{2}]$, it follows from Lemma \ref{positiveQ} that $F^{\prime}_{\rho}(w)\le0$ in $[0,\tfrac{1}{2}].$ 
Thus $F_{\rho}(w)$ is a decreasing function in $[0,\tfrac{1}{2}].$ 
To prove our result, we show that $G_{\rho}(w)$ is an increasing function in $[0,\tfrac{1}{2}]$. It is enough to show that $\Psi(w):=H_{\rho}^{\prime}(w)F_{\rho}(w)-H_{\rho}(w)F^{\prime}_{\rho}(w)\ge0$ in $[0,\tfrac{1}{2}].$ 
We first write
\begin{align*}
\Psi(w)={}&\sum\limits_{j\in\mathbb{Z}}
\left[2(w+j)-\tfrac{4\pi}{\rho^2}(w+j)^3\right]e^{-\tfrac{2\pi}{\rho^2}(w+j)^2}\sum\limits_{k\in\mathbb{Z}}e^{-\tfrac{2\pi}{\rho^2}(w+k)^2}\nonumber\\&\hspace{3.5cm}+\dfrac{4\pi}{\rho^2}\sum\limits_{j\in\mathbb{Z}}(w+j)^2e^{-\tfrac{2\pi}{\rho^2}(w+j)^2}\sum\limits_{k\in\mathbb{Z}}(w+k)e^{-\tfrac{2\pi}{\rho^2}(w+k)^2}\\
={}&2\sum\limits_{j,k\in\mathbb{Z}}
\left[(w+j)-\tfrac{2\pi}{\rho^2}(w+j)^3+\tfrac{2\pi}{\rho^2}(w+j)^2(w+k)\right]e^{-\tfrac{2\pi}{\rho^2}\left\{(w+j)^2+(w+k)^2\right\}}\\
={}&2\sum\limits_{j,k\in\mathbb{Z}}
\left[(w+j)+\tfrac{2\pi}{\rho^2}(w+j)^2(k-j)\right]e^{-\tfrac{2\pi}{\rho^2}\{(w+j)^2+(w+k)^2\}}\nonumber\\
={}&2I_1(w)+\dfrac{4\pi}{\rho^2}I_2(w),
\end{align*}
where 
$$I_1(w):=\sum\limits_{j\in\mathbb{Z}}(w+j)e^{-\tfrac{2\pi}{\rho^2}(w+j)^2}\sum\limits_{k\in\mathbb{Z}}e^{-\tfrac{2\pi}{\rho^2}(w+k)^2}$$
and 
$$I_2(w):=\sum\limits_{j,l\in\mathbb{Z}}l(w+j)^2 e^{-\tfrac{2\pi}{\rho^2}\left\{(w+j)^2+(w+j+l)^2\right\}}.$$
Since $F_\rho(w)$ is a decreasing function in $[0,\tfrac{1}{2}],$ $I_1(w)\ge0$ from \eqref{f'}. To show $I_2(w)$ is nonnegative in $[0,\tfrac{1}{2}]$, we rewrite $I_2(w)$
\begin{align}
&=\sum\limits_{j\in\mathbb{Z}}\sum\limits_{l\in\mathbb{N}}l(w+j)^2 e^{-\tfrac{2\pi}{\rho^2}\{(w+j)^2+(w+j+l)^2\}}-\sum\limits_{j\in\mathbb{Z}}\sum\limits_{l\in\mathbb{N}}l(w+j)^2 e^{-\tfrac{2\pi}{\rho^2}\{(w+j)^2+(w+j-l)^2\}}\nonumber\\
&=\sum\limits_{j\in\mathbb{Z}}\sum\limits_{l\in\mathbb{N}}l\left[(w+j)^2-(w+j+l)^2\right]e^{-\tfrac{2\pi}{\rho^2}\{(w+j)^2+(w+j+l)^2\}}\nonumber\\
&=\sum\limits_{l\in\mathbb{N}}l^2\sum\limits_{j\in\mathbb{Z}}\left[-2(w+j)-l\right]e^{-\tfrac{2\pi}{\rho^2}\{(w+j)^2+(w+j+l)^2\}}.\label{I_2}
\end{align}
If $f_l(x)=-(2x+l)e^{-\tfrac{2\pi}{\rho^2}\left[x^2+(x+l)^2\right]}$, then $\widehat{f_l}(j)=(-1)^{jl}\dfrac{i\rho^3}{4}j e^{-\tfrac{\pi l^2}{\rho^2}}e^{-\tfrac{
\pi \rho^2 j^2}{4}},~j\in\mathbb{Z}.$
Using the Poisson summation formula and \eqref{f'}, \eqref{I_2} becomes
\begin{align}\label{positiveI2}
I_2(w)={}&\sum\limits_{l\in\mathbb{N}}l^2\sum\limits_{j\in\mathbb{Z}}(-1)^{jl}\dfrac{i\rho^3}{4}j e^{-\tfrac{\pi l^2}{\rho^2}}e^{-\tfrac{
\pi \rho^2 j^2}{4}}e^{2\pi ijw}\nonumber\\
={}&\dfrac{\rho^3}{2}\sum\limits_{l\in\mathbb{N}}l^2e^{-\tfrac{\pi l^2}{\rho^2}}\sum\limits_{j\in\mathbb{N}}(-1)^{jl+1}j e^{-\tfrac{
\pi \rho^2 j^2}{4}}\sin{2\pi jw}\nonumber\\
={}&\dfrac{\rho^3}{2}\Big[\sum\limits_{l\in\mathbb{N}}(2l-1)^2e^{-\tfrac{\pi (2l-1)^2}{\rho^2}}\sum\limits_{j\in\mathbb{N}}je^{-\tfrac{\pi \rho^2 j^2}{4}}\sin{2\pi j(\tfrac{1}{2}-w)}\nonumber\\
&{}\hspace{4cm}-\sum\limits_{l\in\mathbb{N}}(2l)^2e^{-\tfrac{\pi(2l)^2}{\rho^2}}\sum\limits_{j\in\mathbb{N}}je^{-\tfrac{\pi \rho^2 j^2}{4}}\sin{2\pi jw}\Big]\nonumber\\
={}&\dfrac{\rho^3}{8\pi}\left[h_1Q(\tfrac{1}{2}-w; \tfrac{\rho^2}{4})\sin{2\pi (\tfrac{1}{2}-w)}-h_2Q(w; \tfrac{\rho^2}{4})\sin{2\pi w}\right]\nonumber\\
={}&\dfrac{\rho^3}{8\pi}\sin{2\pi w}\left[h_1Q(\tfrac{1}{2}-w;\tfrac{\rho^2}{4})-h_2Q(w; \tfrac{\rho^2}{4})\right],
\end{align}
where $h_1=\sum\limits_{l\in\mathbb{N}}(2l-1)^2e^{-\tfrac{\pi}{\rho^2}(2l-1)^2}$ and $h_2=\sum\limits_{l\in\mathbb{N}}(2l)^2e^{-\tfrac{\pi}{\rho^2}(2l)^2}.$ It follows from Lemma \ref{positiveQ} that for $0<\rho<2$,
\begin{align*}
h_1 Q(\tfrac{1}{2}-w; \tfrac{\rho^2}{4})-h_2Q(w; \tfrac{\rho^2}{4})&\ge \left(\dfrac{\rho}{2}\right)^{-3}h_2\left(e^{-\tfrac{\pi}{\rho^2}}\dfrac{h_1}{h_2}-1\right)\\
&=\left(\dfrac{\rho}{2}\right)^{-3}h_2\left(\dfrac{e^{\tfrac{2\pi}{\rho^2}}+\sum\limits_{l=2}^{\infty}(2l-1)^2e^{-\tfrac{\pi}{\rho^2}\left((2l-1)^2-3\right)}}{4+\sum\limits_{l=2}^{\infty}(2l)^2e^{-\tfrac{\pi}{\rho^2}\left((2l)^2-4\right)}}-1\right)
\end{align*}
\begin{align*}
&>\left(\dfrac{\rho}{2}\right)^{-3}h_2\left(\dfrac{e^{\tfrac{\pi}{2}}}{4+\sum\limits_{l=2}^{\infty}(2l)^2e^{-\tfrac{\pi}{4}\left((2l)^2-4\right)}}-1\right)\\
 &>\left(\dfrac{\rho}{2}\right)^{-3}h_2\left(\dfrac{e^{\tfrac{\pi}{2}}}{4\left(1+\sum\limits_{l=2}^{\infty}l^2e^{-\pi (l^2-1)}\right)}-1\right)>0,
\end{align*}
using the fact that $\sum\limits_{l=2}^{\infty}l^2e^{-\pi (l^2-1)}<\tfrac{1}{3000}$. Thus $I_2(w)>0$ in $[0,\tfrac{1}{2}]$ from \eqref{positiveI2}. By symmetricity, $G_{\rho}(w)$ attains its maximum at $w=\tfrac{1}{2}$ in $[0,1]$ for all $0< \rho <2.$
\end{proof}
\begin{lem}\label{common}
If $\phi$ is a totally positive function of type-I, then $M_{\phi,h}\le M_{\widehat{g},h}.$
\end{lem}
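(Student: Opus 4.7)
The plan is to exploit the factorization of $\widehat{\phi}$ in \eqref{type} by writing $\widehat{\phi}(w)=\widehat{g}(w)\,R(w)$, where $R(w)=\prod_{j=1}^{\infty}(1+2\pi i v_j w)^{-1}e^{-2\pi i v_j w}$, so that (using $\widehat{g}=g$) we have $|\widehat{\phi}(w)|^2=|\widehat{g}(w)|^2\,P(w)$ with
\[
P(w):=|R(w)|^2=\prod_{j=1}^{\infty}\frac{1}{1+(2\pi v_j w)^2}.
\]
Two monotonicity facts drive the argument: both $|\widehat{g}(w)|^2=e^{-2\pi w^2}$ and $P(w)$ are even in $w$ and strictly decreasing in $|w|$, whereas the moment factor $w^2$ appearing in \eqref{Bphihs} is strictly increasing in $|w|$. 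Hence $P$ is "anti-correlated" with $w^2$, and a Chebyshev-type inequality should show that reweighting the weighted average defining $B_{\widehat{g},h}$ by the extra factor $P$ can only decrease the ratio.

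Concretely, set $u_l:=w+\tfrac{l}{h}$, $a_l:=u_l^{2}$, $b_l:=|\widehat{g}(u_l)|^2$, $c_l:=P(u_l)$, so that $B_{\widehat{g},h}(w)=\sum_l a_l b_l/\sum_l b_l$ and $B_{\phi,h}(w)=\sum_l a_l b_l c_l/\sum_l b_l c_l$. A direct symmetrization in the indices $l$ and $m$ yields
\[
B_{\widehat{g},h}(w)-B_{\phi,h}(w)=\frac{1}{2\bigl(\sum_l b_l\bigr)\bigl(\sum_l b_l c_l\bigr)}\sum_{l,m\in\mathbb{Z}}b_l b_m\,(a_l-a_m)(c_m-c_l).
\]
For any pair $(l,m)$, both $a_l-a_m=u_l^{2}-u_m^{2}$ and $c_m-c_l=P(u_m)-P(u_l)$ share the sign of $|u_l|-|u_m|$ (increasing vs.\ decreasing in $|u|$), so every term of the double sum is nonnegative. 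This gives the pointwise inequality $B_{\phi,h}(w)\le B_{\widehat{g},h}(w)$ for almost every $w$, and taking the essential supremum over $[0,1/h]$ delivers $M_{\phi,h}\le M_{\widehat{g},h}$.

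The only technical point is justifying the algebraic manipulations above, which requires absolute convergence of the series $\sum_l b_l$, $\sum_l b_l c_l$, $\sum_l a_l b_l$, $\sum_l a_l b_l c_l$ and of the resulting double sums. This is immediate from the Gaussian decay of $\widehat{g}$, the uniform bound $0<P\le 1$, and condition $(iii)$ in the definition of $\mathcal{A}$, which gives uniform boundedness for $w$ in the bounded interval $[0,1/h]$. The main conceptual step is recognizing that the totally positive factor $P$ is monotone in $|w|$ in the direction opposite to the moment $w^2$, turning the problem into a Chebyshev-type inequality; once that is pinned down, the rest is routine bookkeeping.
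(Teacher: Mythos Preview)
Your argument is correct. The symmetrization identity you write down is valid, and the sign analysis is sound: $a_l-a_m=u_l^{2}-u_m^{2}$ and $c_m-c_l=P(u_m)-P(u_l)$ both carry the sign of $|u_l|-|u_m|$ because $u\mapsto u^{2}$ is increasing in $|u|$ while $P(u)=\prod_j(1+(2\pi v_j u)^2)^{-1}$ is even and non-increasing in $|u|$. The convergence issues are handled exactly as you say, by Gaussian decay and $0<P\le 1$. Taking the essential supremum of the pointwise inequality $B_{\phi,h}(w)\le B_{\widehat g,h}(w)$ then yields the lemma.

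The paper's proof reaches the same pointwise inequality by a different organization: instead of comparing $\phi$ to $g$ in one step, it peels off the factors $(1+2\pi i v_j w)^{-1}$ one at a time, defining intermediate windows $\psi_\mu$ with $\psi_0=g$ and $\psi_N=\phi$, and shows $\widetilde B_{\mu+1}(w)\le\widetilde B_{\mu}(w)$ for each $\mu$ by an explicit reindexing of the double sum that produces a manifestly nonnegative expression. For infinite products it then passes to the limit by dominated convergence. Your Chebyshev/correlation-inequality approach is essentially the ``all factors at once'' version of the same mechanism; it is shorter and more conceptual, avoids the inductive bookkeeping, and handles the infinite-product case directly. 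The paper's step-by-step computation, on the other hand, makes the nonnegativity completely explicit at each stage without appealing to any monotonicity of the full product $P$, which is perhaps easier to verify line by line.
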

\begin{proof}
Let $\phi\in L^1(\mathbb{R})$ such that its Fourier transform is of the form
\begin{eqnarray*}
\widehat{\phi}(w)=g(w)\prod\limits_{j=1}^{N}\left(1+2\pi i v_jw\right)^{-1}e^{-2\pi iv_jw},
\end{eqnarray*}
where $g(w)=e^{-\pi w^2}$, $v_j\in\mathbb{R}$, $N\in\mathbb{N}\cup\{\infty\}$, and $0<\sum_{j}v_j^2<\infty$. We first prove our result when $N$ is finite. For each $\mu=0,1,\dots,N,$ we define 
 $$\widehat{\psi_{\mu}}(w):=g(w)\prod\limits_{j=0}^{\mu}\left(1+2\pi i v_jw\right)^{-1}e^{-2\pi iv_jw}~\text{with}~v_0=0,~f_{\mu,h}(w)=|\widehat{\psi_{\mu}}(\tfrac{w}{h})|^2,$$ and $\widetilde{B}_{\mu,h}(w)=\dfrac{\sum\limits_{n\in\mathbb{Z}}(w+n)^2f_{\mu,h}(w+n)}{\sum\limits_{n\in\mathbb{Z}}f_{\mu,h}(w+n)}.$  Notice that $M_{\psi_{\mu},h}=\dfrac{1}{h^2} \esssup\limits_{w\in[0,1]}\widetilde{B}_{\mu,h}(w).$
For computational convenience, we write $f_{\mu,h}\equiv f_\mu,$ $\widetilde{B}_{\mu,h}\equiv \widetilde{B}_{\mu},$ and $s_j=\dfrac{4\pi^2v_j^2}{h^2}$. To prove our result, it is enough to show that
\begin{align*}
\widetilde{B}_{\mu}(w)-\widetilde{B}_{\mu+1}(w)
=\dfrac{\sum\limits_{n,k\in\mathbb{Z}}\left\{(w+k)^2-(w+n)^2\right\}f_{\mu}(w+k)f_{\mu+1}(w+n)}{\sum\limits_{k\in\mathbb{Z}}f_{\mu}(w+k)\sum\limits_{n\in\mathbb{Z}}f_{\mu+1}(w+n)}\ge 0,\
\end{align*}
for $ w\in[0,1]$ and $\mu=0,1,\dots,N.$ Since
$$\hspace{-3cm}\sum\limits_{n,k\in\mathbb{Z}}\left\{(w+k)^2-(w+n)^2\right\}f_{\mu}(w+k)f_{\mu+1}(w+n)$$
\begin{align*}
={}&\sum\limits_{n,k\in\mathbb{Z}}\{2w(k-n)+k^2-n^2\}f_{\mu}(w+k)f_{\mu+1}(w+n)\\
={}&\sum\limits_{n,k\in\mathbb{Z}} k(2w+2n+k)f_{\mu}(w+n+k)\dfrac{f_{\mu}(w+n)}{1+s_{\mu+1}(w+n)^2}
\end{align*}
\begin{align*}
\begin{split}
={}&\sum\limits_{n\in\mathbb{Z}}\sum\limits_{k\in\mathbb{N}}k(2w+2n+k)f_{\mu}(w+n+k)\dfrac{f_{\mu}(w+n)}{1+s_{\mu+1}(w+n)^2}\\&\hspace{3cm}-\sum\limits_{n\in\mathbb{Z}}\sum\limits_{k\in\mathbb{N}}k(2w+2n-k)f_{\mu}(w+n-k)\dfrac{f_{\mu}(w+n)}{1+s_{\mu+1}(w+n)^2}\\
={}&\sum\limits_{n\in\mathbb{Z}}\sum\limits_{k\in\mathbb{N}}k(2w+2n+k)f_{\mu}(w+n+k)\dfrac{f_{\mu}(w+n)}{1+s_{\mu+1}(w+n)^2}\\&\hspace{3cm}-\sum\limits_{n\in\mathbb{Z}}\sum\limits_{k\in\mathbb{N}}k(2w+2n+k)f_{\mu}(w+n)\dfrac{f_{\mu}(w+n+k)}{1+s_{\mu+1}(w+n+k)^2}\\
={}&\sum\limits_{n\in\mathbb{Z}}\sum\limits_{k\in\mathbb{N}}\dfrac{s_{\mu+1}k^2(2w+2n+k)^2f_{\mu}(w+n+k)f_{\mu}(w+n)}{\left(1+s_{\mu+1}(w+n)^2\right)\left(1+s_{\mu+1}(w+n+k)^2\right)}\ge 0,
\end{split}
\end{align*}
$\widetilde{B}_{\mu+1}(w)\le \widetilde{B}_{\mu}(w)$ for $w\in[0,1].$ As $\mu\rightarrow\infty$, we have
$$\lim\limits_{\mu\rightarrow\infty}\dfrac{\sum\limits_{n\in\mathbb{Z}}(w+n)^2f_{\mu}(w+n)}{\sum\limits_{n\in\mathbb{Z}}f_{\mu}(w+n)}=\dfrac{\sum\limits_{n\in\mathbb{Z}}(w+n)^2|\widehat{\phi}(\tfrac{w+n}{h})|^2}{\sum\limits_{n\in\mathbb{Z}}|\widehat{\phi}(\tfrac{w+n}{h})|^2},$$
by the Lebesgue dominated convergence theorem. For $N=\infty$, our result now follows from the monotonicity property of $\widetilde{B}_{\mu}(w)$.
\end{proof} 
\begin{thm}\label{B1gammalem}
If $\phi$ is a totally positive function of type-I, then $\lim\limits_{\gamma\rightarrow0}M_{\phi_\gamma,1/\beta}=\dfrac{\beta^2}{4}.$
Consequently, if $\alpha\beta<1,$ then there exists a $\gamma>0$ depending on $\alpha\beta$ such that $\mathcal{G}(\phi_\gamma,\alpha,\beta)$ forms a frame for $L^2(\mathbb{R}).$ 
\end{thm}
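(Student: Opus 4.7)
The plan is to control $M_{\phi_\gamma,1/\beta}$ from above by reducing to the pure Gaussian case, match it with the easy lower bound from \eqref{Msh}, and then plug the resulting limit into Theorem \ref{frame}.

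The first step is to chain together three observations. By the scaling identity $M_{\phi_\gamma,1/\beta}=\gamma^2 M_{\phi,\gamma/\beta}$ recorded just before Theorem \ref{berns}, combined with Lemma \ref{common} applied at $h=\gamma/\beta$, we get $M_{\phi_\gamma,1/\beta}\le \gamma^2 M_{\widehat{g},\gamma/\beta}$. For the Gaussian window itself, substituting $u=hw$ in the definition \eqref{Bphihs} rewrites $B_{\widehat{g},h}(w)$ as $\tfrac{1}{h^2}G_h(hw)$, so that $M_{\widehat{g},h}=\tfrac{1}{h^2}\,\esssup_{u\in[0,1]}G_h(u)$. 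Restricting to $\gamma$ small enough that $\gamma/\beta<2$, Lemma \ref{posF} places this supremum at $u=\tfrac12$, giving the explicit bound
$$M_{\phi_\gamma,1/\beta}\;\le\;\beta^2\, G_{\gamma/\beta}(\tfrac12).$$

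The second step is to show $\lim_{\rho\to 0}G_\rho(\tfrac12)=\tfrac14$. In the series defining $G_\rho(\tfrac12)$, the indices $n=0$ and $n=-1$ both yield the minimal value $(\tfrac12+n)^2=\tfrac14$, while every other index gives at least $\tfrac94$. Dividing numerator and denominator by $e^{-\pi/(2\rho^2)}$ isolates the two leading contributions $2\cdot\tfrac14$ and $2$ respectively, and each remaining term carries an extra factor of at least $e^{-2\pi/\rho^2}\to 0$, so the ratio tends to $\tfrac14$. Combined with the scaled lower bound $M_{\phi_\gamma,1/\beta}\ge\gamma^2\cdot\beta^2/(4\gamma^2)=\beta^2/4$ coming directly from \eqref{Msh}, the squeeze yields the claimed limit. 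This asymptotic for $G_\rho$ is the one genuinely computational step, but it is routine term-by-term analysis.

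For the frame conclusion, the limit guarantees that $\tfrac{1}{2\sqrt{M_{\phi_\gamma,1/\beta}}}\to 1/\beta$ as $\gamma\to 0$, so the hypothesis $\alpha\beta<1$ supplies a $\gamma>0$ with $\alpha<\tfrac{1}{2\sqrt{M_{\phi_\gamma,1/\beta}}}$. Since the type-I factorization of $\widehat{\phi}$ keeps $|\widehat{\phi_\gamma}|$ strictly positive with Gaussian decay, $\phi_\gamma\in\mathcal{A}$ and is a stable generator of $V_{1/\beta}(\phi_\gamma)$, so Theorem \ref{frame} furnishes the Gabor frame $\mathcal{G}(\phi_\gamma,\alpha,\beta)$. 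The only real obstacle I foresee is keeping the bookkeeping of the scaling parameter consistent across the three estimates so that $\gamma$ moves cleanly from $M_{\phi_\gamma,1/\beta}$ to $G_{\gamma/\beta}(\tfrac12)$; once that is in place, the remainder is an application of results already proved in the excerpt.
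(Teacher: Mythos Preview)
Your proposal is correct and follows essentially the same route as the paper: reduce $M_{\phi_\gamma,1/\beta}$ to the Gaussian case via the scaling identity and Lemma \ref{common}, invoke Lemma \ref{posF} to locate the supremum of $G_{\gamma/\beta}$ at $\tfrac12$, compute $G_\rho(\tfrac12)\to\tfrac14$, squeeze against the lower bound \eqref{Msh}, and finish with Theorem \ref{frame}. The only cosmetic difference is that the paper bounds the denominator of $G_\rho(\tfrac12)$ below by its two leading terms rather than dividing both sides through, but this yields the same asymptotic.
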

\begin{proof}
Notice that $M_{g_\gamma,1/\beta}=\beta^2\esssup\limits_{w\in[0,1]}\dfrac{\sum\limits_{j\in\mathbb{Z}}(w+j)^2e^{-\tfrac{2\pi}{\rho^2}(w+j)^2}}{\sum\limits_{j\in\mathbb{Z}}e^{-\tfrac{2\pi}{\rho^2}(w+j)^2}},$ where $\rho=\dfrac{\gamma}{\beta}.$ From Lemma \ref{posF}, we have for $0< \rho <2$
\begin{align*}
M_{g_\gamma,1/\beta}
&=\dfrac{\beta^2\sum\limits_{j\in\mathbb{Z}}(\tfrac{1}{2}+j)^2e^{-\tfrac{2\pi}{\rho^2}\left(\tfrac{1}{2}+j\right)^2}}{\sum\limits_{j\in\mathbb{Z}}e^{-\tfrac{2\pi}{\rho^2}\left(\tfrac{1}{2}+j\right)^2}}\\
&\le\dfrac{\beta^2\sum\limits_{j\in\mathbb{Z}}(\tfrac{1}{2}+j)^2e^{-\tfrac{2\pi}{\rho^2}\left(\tfrac{1}{2}+j\right)^2}}{2e^{-\tfrac{2\pi}{\rho^2}\tfrac{1}{4}}}\\
&=\dfrac{\beta^2}{2}\sum\limits_{j\in\mathbb{Z}}(\tfrac{1}{2}+j)^2e^{-\tfrac{2\pi}{\rho^2}\left[\left(\tfrac{1}{2}+j\right)^2-\tfrac{1}{4}\right]}
\end{align*}
\begin{align*}
&=\dfrac{\beta^2}{2}\sum\limits_{j\in\mathbb{Z}}(\tfrac{1}{2}+j)^2e^{-\tfrac{2\pi}{\rho^2}j(j+1)}\\
&=\dfrac{\beta^2}{4}+\dfrac{\beta^2}{2}\sum\limits_{j\in\mathbb{Z}\setminus\{0,-1\}}(\tfrac{1}{2}+j)^2e^{-\tfrac{2\pi}{\rho^2}j(j+1)}.
\end{align*}
It is easy to show that $\lim\limits_{\rho\rightarrow0}\sum\limits_{j\in\mathbb{Z}\setminus\{0,-1\}}(\tfrac{1}{2}+j)^2e^{-\tfrac{2\pi}{\rho^2}j(j+1)}=0$
by the Lebesgue dominated convergence theorem. Therefore, $M_{g_\gamma,1/\beta}\le \tfrac{\beta^2}{4}$ as $\gamma\rightarrow 0.$ We have already shown that $M_{\phi_\gamma,1/\beta}\ge \tfrac{\beta^2}{4}.$ Hence $\lim\limits_{\gamma\rightarrow 0} M_{\phi_\gamma,1/\beta}=\tfrac{\beta^2}{4}$ from Lemma \ref{common}. 

If $\alpha\beta<1,$ then $\alpha\beta=1-\epsilon,$ for some $\epsilon>0.$ Since
$\lim\limits_{\gamma\rightarrow0}2\alpha\sqrt{M_{\phi_\gamma,1/\beta}}=\alpha\beta,$
there exists a $\delta>0$ depending on $\alpha\beta$ such that 
$|2\alpha\sqrt{M_{\phi_\gamma,1/\beta}}-\alpha\beta|<\epsilon, ~\text{whenever}~0<\gamma\le\delta,$
which implies 
\begin{eqnarray*}
2\alpha\sqrt{M_{\phi_\gamma,1/\beta}}<\alpha\beta +\epsilon=1,~\text{whenever}~0<\gamma\le\delta.
\end{eqnarray*}
The desired result now follows from Theorem \ref{frame}. 
\end{proof}
Based on Theorem \ref{frame}, we have drawn a region belonging to the frame set for the Gaussian function using MATLAB. Our region is very close to the complete frame set. Please refer to Figure \ref{gaussianfig}. 
\begin{figure}[H]
 \centering \begin{subfigure}[b]{0.49\textwidth}
\centering
\includegraphics[width=\textwidth]{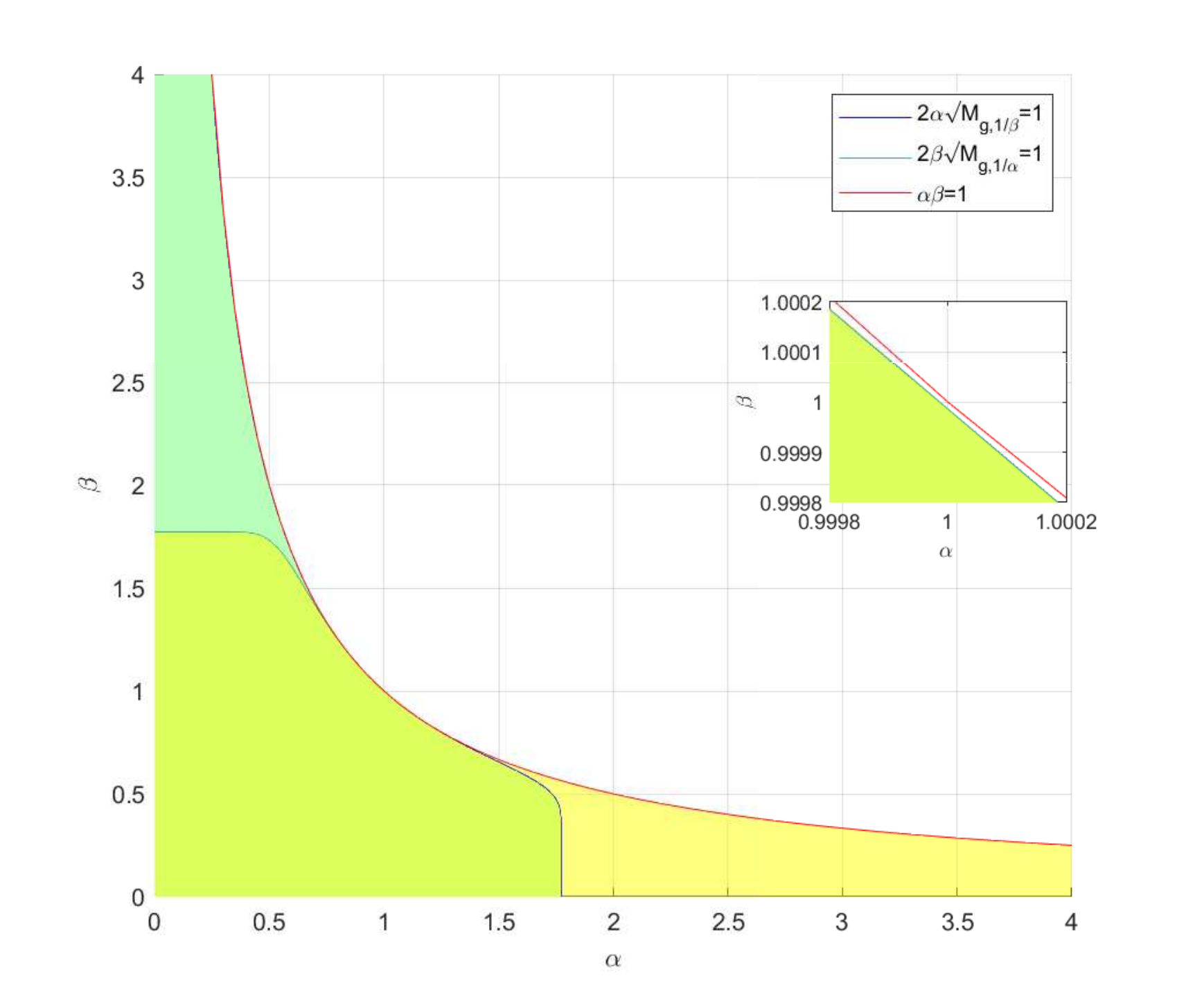}
\caption{}
\end{subfigure}
\begin{subfigure}[b]{0.49\textwidth}
         \centering
\includegraphics[width=\textwidth]{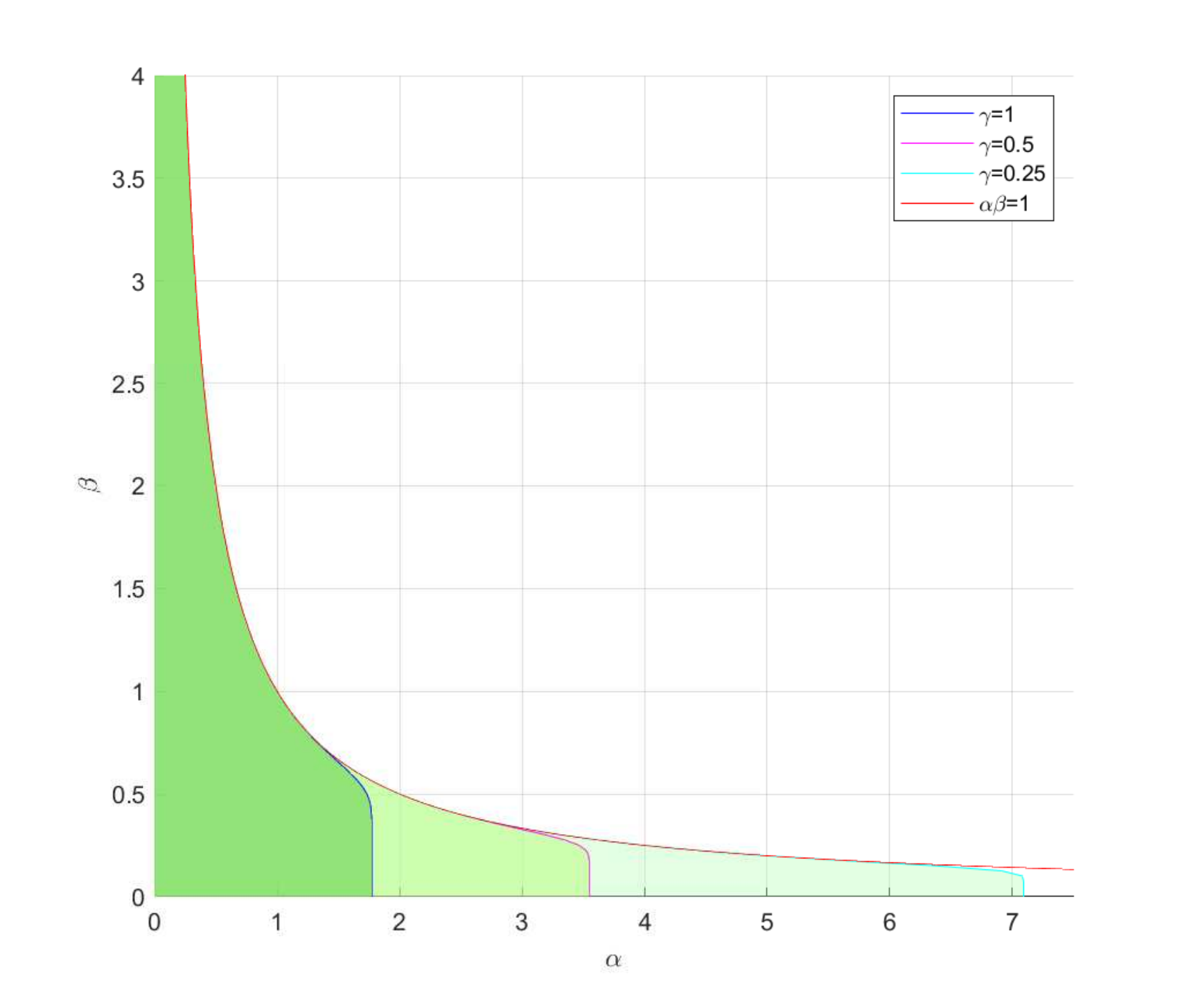}
\caption{}
\end{subfigure}
\caption{In the sketch (A) the greenish color represents the frame region of $g(x)=e^{-\pi x^2}$ which is obtained from the condition $0<\alpha<\left({2\sqrt{M_{g,1/\beta}}}\right)^{-1}$ and yellow region is obtained using the self Fourier transform property of $g$. The subgraph explains that our frame region is not optimal. Sketch $($B$)$ demonstrates the validity of Theorem \ref{B1gammalem} for totally positive functions of type-I.}
\label{gaussianfig}
\end{figure}
\section{Gabor frames with functions of type-II}
Let 
$$g_1(w)=e^{-|w|}~\text{and}~g_2(w)=\dfrac{1}{(1+2\pi iaw)(1+2\pi ibw)},~w\in\mathbb{R},$$
where $a$ and $b$ are nonzero real numbers. Let $$\mathsf{H_{\rho}}(w)=\sum\limits_{n\in\mathbb{Z}}(w+n)^2\left|g_1\left(\rho(w+n)\right)\right|^2, \mathsf{F_{\rho}}(w)=\sum\limits_{n\in\mathbb{Z}}\left|g_1\left(\rho(w+n)\right)\right|^2,$$ and $\mathsf{G_{\rho}}(w)=\dfrac{H_{\rho}(w)}{F_{\rho}(w)}.$ Similarly, we can define $\widetilde{H}_\rho,$ $\widetilde{F}_\rho$ and $\widetilde{G}_\rho$ for $g_2.$
\begin{lem}\label{positiveI1}
For each $\rho\ge0.74$, $f_{\rho}(w)=e^{4\rho (1-2w)}-\dfrac{we^{-2\rho}+1-w}{w-we^{-2\rho}+e^{-2\rho}}$ is nonnegative in $[0, \tfrac{1}{2}]$.
\end{lem}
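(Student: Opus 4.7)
The plan is to clear the strictly positive denominator $D(w) := w - we^{-2\rho} + e^{-2\rho}$ and show that $D(w)f_\rho(w)$ is nonnegative on $[0,1/2]$. Setting $s = 1-2w \in [0,1]$, $u = e^{-2\rho}$, and $t = \tanh\rho$, the identity $(1-u)/(1+u) = t$ lets one write
\[
D(w) = \tfrac{1+u}{2}(1-st), \qquad we^{-2\rho}+1-w = \tfrac{1+u}{2}(1+st),
\]
so that
\[
D(w)f_\rho(w) = \tfrac{1+u}{2}\bigl[(1-st)e^{4\rho s}-(1+st)\bigr] =: \tfrac{1+u}{2}\,H(s).
\]
The lemma therefore reduces to showing $H(s) \geq 0$ on $[0,1]$.

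Next I would record the boundary data $H(0) = 0$, $H(1) = (1+t)(e^{2\rho}-1) > 0$ (using the identity $(1-t)e^{2\rho}=1+t$), and the slope $H'(0) = 4\rho - 2\tanh\rho > 0$. The decisive computation is
\[
H''(s) = 8\rho\, e^{4\rho s}\bigl[2\rho(1-st)-t\bigr].
\]
Since the bracket is a strictly decreasing linear function of $s$ that is positive at $s=0$, it either stays nonnegative on $[0,1]$ or has a unique zero $s^\star = (2\rho-t)/(2\rho t) \in (0,1)$. Correspondingly, $H$ is either convex on $[0,1]$, or convex on $[0,s^\star]$ and concave on $[s^\star,1]$.

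The conclusion then follows by a short case split. In the purely convex case, $H(0)=0$ and $H'(0)>0$ immediately give $H \ge 0$ on $[0,1]$. In the convex-then-concave case, the same argument gives $H(s^\star) > 0$, after which concavity on $[s^\star,1]$ combined with $H(s^\star)>0$ and $H(1)>0$ yields $H(s) \ge \min(H(s^\star), H(1)) > 0$, because a concave function on an interval dominates the minimum of its endpoint values.

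The main obstacle is purely algebraic: verifying the reduction in the first paragraph, where the unwieldy original fraction must collapse exactly to the tidy form involving $\tanh\rho$. Once that reduction is in hand, the convexity/concavity analysis is elementary. I note in passing that the argument works for every $\rho > 0$, so the numerical threshold $\rho \geq 0.74$ in the statement appears to be dictated by the subsequent application of the lemma rather than by this inequality per se.
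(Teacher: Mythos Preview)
Your proof is correct and genuinely different from the paper's. The paper splits $[0,\tfrac12]$ at $w=\tfrac14$: on $[0,\tfrac14]$ it bounds the fractional term by its value at $w=0$ and uses $e^{4\rho(1-2w)}\ge e^{2\rho}$; on $(\tfrac14,\tfrac12]$ it rewrites $f_\rho$ as a quotient and shows that an auxiliary function $\phi_\rho(w)=\tfrac{w}{1-w}\cdot\tfrac{e^{2\rho(2-4w)}-e^{-2\rho}}{1-e^{2\rho(1-4w)}}$ is decreasing with $\phi_\rho(\tfrac12)=1$. That monotonicity step hinges on the sign of a quadratic $\psi_\rho$ whose positivity at $w=\tfrac14$ is exactly where the numerical threshold $\rho\ge 0.74$ enters. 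By contrast, your substitution $s=1-2w$, $t=\tanh\rho$ collapses the problem to the single-variable inequality $H(s)=(1-st)e^{4\rho s}-(1+st)\ge 0$ and then exploits the convex--then--concave shape of $H$ via $H(0)=0$, $H'(0)>0$, $H(1)>0$; this avoids the interval split, needs no numerical constant, and in fact proves the lemma for every $\rho>0$. Your closing remark is accurate: the restriction $\rho\ge 0.74$ is an artifact of the paper's argument, not of the inequality itself.
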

\begin{proof}
If $h_{\rho}(w)=\dfrac{we^{-2\rho}+1-w}{w-we^{-2\rho}+e^{-2\rho}},$ then $h_{\rho}^{\prime}(w)=-\dfrac{1-e^{-4\rho}}{\left(w-we^{-2\rho}+e^{-2\rho}\right)^2}<0.$ Therefore $h_{\rho}(w)$ is a decreasing function in $[0,\tfrac{1}{2}]$ and hence $h_\rho(w)\le e^{2\rho}$ in $[0,\tfrac{1}{2}].$ Consequently, $f_{\rho}(w)\ge e^{2\rho}\left(e^{2\rho(1-4w)}-1\right)\ge 0$ in $[0,\tfrac{1}{4}].$ To show $f_{\rho}(w)$ is nonnegative in $ (\tfrac{1}{4},\tfrac{1}{2}],$ we first write
\begin{eqnarray*}
f_{\rho}(w)
=\dfrac{w\left(e^{4\rho (1-2w)}-e^{-2\rho}\right)+(1-w)(e^{2\rho (1-4w)}-1)}{w+e^{-2\rho}(1-w)}.
\end{eqnarray*}
Since $w+e^{-2\rho}(1-w)>0$ for $w\in (\tfrac{1}{4},\tfrac{1}{2}],$ it is enough to show that 
\begin{eqnarray}\label{phirho}
\phi_{\rho}(w):=\dfrac{w}{1-w}\dfrac{e^{2\rho (2-4w)}-e^{-2\rho}}{1-e^{2\rho (2-4w)}e^{-2\rho}}\ge 1,~w\in (\tfrac{1}{4},\tfrac{1}{2}].
\end{eqnarray}
Differentiating \eqref{phirho} with respect to $w$, we get 
$$\phi_{\rho}^{\prime}(w)=-\dfrac{\left(8\rho(1-e^{4\rho})w^2+8\rho(e^{4\rho}-1)w-(e^{4\rho}+1)\right)e^{2\rho(2-4w)}+e^{4\rho(2-4w)+2\rho}+e^{2\rho}}{(w-1)^2(e^{2\rho(2-4w)}-e^{2\rho})^2}.$$
By calculus technique again, we can easily show that $\psi_{\rho}(w):=8\rho(1-e^{4\rho})w^2+8\rho(e^{4\rho}-1)w-(e^{4\rho}+1)$ is an increasing function in $[ \tfrac{1}{4},\tfrac{1}{2}]$  and $h(\rho):=\rho-\tfrac{2}{3}\tfrac{e^{4\rho}+1}{e^{4\rho}-1}$ is an increasing function in $(0,\infty)$. Since $h(0.74)\ge0$,
$$\psi_{\rho}(w)\ge\psi_{\rho}(\tfrac{1}{4})=\tfrac{3\rho}{2}(e^{4\rho}-1)-(e^{4\rho}+1)=\tfrac{3}{2}(e^{4\rho}-1)h(\rho)\ge0,$$ for all $\rho\ge 0.74,$ which implies that $\phi_{\rho}(w)$ is a decreasing function in $(\tfrac{1}{4},\tfrac{1}{2}]$. Since $\phi_{\rho}(\tfrac{1}{2})=1$, \eqref{phirho} holds for all $\rho\ge 0.74$. 
\end{proof}
\begin{lem}\label{positiveG}
For each $ \rho \ge0.74$, $\mathsf{G}_{\rho}(w)$ attains its maximum at $w=\tfrac{1}{2}$ in $[0,1]$.
\end{lem}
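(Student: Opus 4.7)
The plan is to mirror the strategy of Lemma \ref{posF}, but to replace Poisson summation by direct geometric-series computations, which are available because $g_1(w) = e^{-|w|}$ produces piecewise exponentials. Writing $a = e^{-2\rho}$, $x = e^{-2\rho w}$, $y = a e^{2\rho w} = e^{-2\rho(1-w)}$ and splitting $|w+n|$ according to the sign of $n$ (valid for $w \in [0,1]$), the identities $\sum_{n \ge 0} n^k a^n$ for $k = 0,1,2$ yield the closed forms
\begin{align*}
\mathsf{F}_\rho(w) = \frac{x+y}{1-a}, \qquad \mathsf{H}_\rho(w) = \frac{x P(w,a) + y P(1-w, a)}{(1-a)^3},
\end{align*}
where $P(w,a) = w^2(1-a)^2 + 2wa(1-a) + a(1+a)$. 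Both $\mathsf{F}_\rho$ and $\mathsf{H}_\rho$ are $1$-periodic and symmetric about $w = 1/2$, so it is enough to prove that $\mathsf{G}_\rho$ is non-decreasing on $[0, 1/2]$, i.e. that $T(w) := \mathsf{H}_\rho'(w)\mathsf{F}_\rho(w) - \mathsf{H}_\rho(w)\mathsf{F}_\rho'(w) \ge 0$ on $[0, 1/2]$.

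The core step is an algebraic collapse of $T(w)$. Differentiating the two closed forms and using $xy = a$, together with $A + B = 1+a$ and $B - A = (1-a)(1-2w)$ for $A := w(1-a) + a$ and $B := (1-w)(1-a) + a$, a direct manipulation reduces $T$ to the two-term identity
\begin{align*}
\frac{(1-a)^3}{2}\, T(w) \;=\; \bigl(x^2 A - y^2 B\bigr) \;+\; a(1-2w)\bigl[2\rho(1+a) - (1-a)\bigr].
\end{align*}
It then remains to verify that each of the two summands is nonnegative on $[0, 1/2]$.

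The second summand is nonnegative because $a > 0$, $1-2w \ge 0$, and $2\rho(1+a) \ge 1-a$ is equivalent to $2\rho \ge \tanh\rho$, which is elementary and holds for every $\rho > 0$. For the first summand, compute $(x/y)^2 = e^{4\rho(1-2w)}$ and
\begin{align*}
\frac{B}{A} \;=\; \frac{(1-w)(1-a) + a}{w(1-a) + a} \;=\; \frac{w e^{-2\rho} + 1 - w}{w - w e^{-2\rho} + e^{-2\rho}},
\end{align*}
which are exactly the two pieces defining $f_\rho(w)$ in Lemma \ref{positiveI1}. Hence $x^2 A - y^2 B = y^2 A \cdot f_\rho(w)$, and this is nonnegative on $[0, 1/2]$ whenever $\rho \ge 0.74$ by Lemma \ref{positiveI1}. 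Combining the two estimates gives $T(w) \ge 0$ on $[0, 1/2]$, and symmetry about $w = 1/2$ concludes that $\mathsf{G}_\rho$ attains its maximum on $[0,1]$ at $w = 1/2$.

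The main obstacle is discovering the right split of $T(w)$: once the ratio $B/A$ and the exponential ratio $(x/y)^2$ are matched with the two terms constituting $f_\rho$, Lemma \ref{positiveI1} does the heavy lifting and the threshold $\rho \ge 0.74$ is inherited exactly from there. Everything else (closed forms via geometric series, reduction to $T \ge 0$ via symmetry, and the comparison $\tanh \rho < 2\rho$) is routine.
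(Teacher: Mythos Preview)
Your proof is correct and follows essentially the same approach as the paper. Both arguments reduce $T(w)=\mathsf{H}_\rho'\mathsf{F}_\rho-\mathsf{H}_\rho\mathsf{F}_\rho'$ to the identical two-term decomposition (your $(x^2A-y^2B)$ and $a(1-2w)[2\rho(1+a)-(1-a)]$ are exactly the paper's $I_1$ and $I_2$ up to the factor $(1-a)^3$), invoke Lemma~\ref{positiveI1} for the first term, and use the elementary inequality $2\rho>\tanh\rho$ for the second; the only difference is organizational---you compute closed forms for $\mathsf{F}_\rho,\mathsf{H}_\rho$ first and then differentiate, whereas the paper expands $T(w)$ directly as a double sum before summing the geometric series.
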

\begin{proof}
To prove our result, it is enough to show that  $\mathsf{G}_{\rho}(w)$ is an increasing function in $[0,\tfrac{1}{2}].$ Let us define $$I_1(w):=\sum\limits_{l=0}^{\infty}\sum\limits_{n=0}^{\infty}(w+l)e^{-2\rho\left(2w+n+l\right)}+\sum\limits_{l=1}^{\infty}\sum\limits_{n=1}^{\infty}(w-l)e^{-2\rho\left(-2w+n+l\right)}$$ and 
\begin{multline*}
I_2(w):=\sum\limits_{l=1}^{\infty}\sum\limits_{n=0}^{\infty}\left[w-l+2\rho(w-l)^2\right]e^{-2\rho\left(n+l\right)}\\+\sum\limits_{l=0}^{\infty}\sum\limits_{n=1}^{\infty}\left[w+l-2\rho(w+l)^2\right]e^{-2\rho\left(n+l\right)}.    
\end{multline*}
Then 
\begin{align}
I_1(w)&=e^{-4\rho w}\sum\limits_{l=0}^{\infty}(w+l)e^{-2\rho l}\sum\limits_{n=0}^{\infty}e^{-2\rho n}+e^{4\rho w}\sum\limits_{l=1}^{\infty}(w-l)e^{-2\rho l}\sum\limits_{n=1}^{\infty}e^{-2\rho n}\nonumber\\
&=\left(\dfrac{w}{1-e^{-2\rho}}+\dfrac{e^{-2\rho}}{(1-e^{-2\rho})^2}\right)\dfrac{e^{-4\rho w}}{1-e^{-2\rho}}+\left(\dfrac{we^{-2\rho}}{1-e^{-2\rho}}-\dfrac{e^{-2\rho}}{(1-e^{-2\rho})^2}\right)\dfrac{e^{-2\rho(1-2w)}}{1-e^{-2\rho}}\nonumber\\
&=\dfrac{e^{-4\rho w}}{(1-e^{-2\rho})^3}\left(w-we^{-2\rho}+e^{-2\rho}\right)+\dfrac{e^{-4\rho (1-w)}}{(1-e^{-2\rho})^3}\left(w-we^{-2\rho}-1\right)\nonumber\\
&=\dfrac{e^{-4\rho (1-w)}\left(w-we^{-2\rho}+e^{-2\rho}\right)}{\left(1-e^{-2\rho}\right)^3}\left\{e^{4\rho (1-2w)}-\dfrac{we^{-2\rho}+1-w}{w-we^{-2\rho}+e^{-2\rho}}\right\}>0\nonumber
\end{align}
for every $\rho\ge 0.74$ from Lemma \ref{positiveI1}. Similarly, $I_2(w)$
\begin{align*}
\begin{split}
I_2(w)={}& \sum\limits_{l=1}^{\infty}\left[w-l+2\rho(w-l)^2\right]e^{-2\rho l}+\sum\limits_{n=1}^{\infty}\left(w-2\rho w^2\right)e^{-2\rho n}\\&\hspace{7cm}+\sum\limits_{l=1}^{\infty}\sum\limits_{n=1}^{\infty}(2w-8\rho lw)e^{-2\rho (l+n)}\\
={}&\sum\limits_{l=1}^{\infty}\left(2w-l+2\rho l^2-4\rho lw\right)e^{-2\rho l}+\sum\limits_{l=1}^{\infty}\sum\limits_{n=1}^{\infty}(2w-8\rho lw)e^{-2\rho (l+n)}\\
={}&\dfrac{2we^{-2\rho}}{1-e^{-2\rho}}-(4\rho w+1)\dfrac{e^{-2\rho}}{(1-e^{-2\rho})^2}+2\rho\dfrac{e^{-2\rho}+e^{-4\rho}}{(1-e^{-2\rho})^3}\\&\hspace{5.5cm}+\left(\dfrac{2we^{-2\rho}}{1-e^{-2\rho}}-\dfrac{8\rho we^{-2\rho}}{(1-e^{-2\rho})^2}\right)\dfrac{e^{-2\rho}}{1-e^{-2\rho}}\\
={}&\dfrac{e^{-2\rho}}{\left(1-e^{-2\rho}\right)^3}\left[2we^{-4\rho}+\left(2\rho+4\rho w+1-4w\right)e^{-2\rho}+2\rho-4\rho w-1+2w\right]\\&\hspace{5cm}+\dfrac{e^{-2\rho}}{\left(1-e^{-2\rho}\right)^3}\left(2we^{-2\rho}-2we^{-4\rho}-8\rho we^{-2\rho}\right)\\
={}&\dfrac{e^{-2\rho}}{\left(1-e^{-2\rho}\right)^3}\left[2w(1-e^{-2\rho}-2\rho-2\rho e^{-2\rho})+2\rho e^{-2\rho}+e^{-2\rho}+2\rho-1\right]>0
\end{split}
\end{align*}
using the fact that $(2\rho+1)e^{-2\rho}+2\rho-1>0$ for $\rho>0$ and $w\le\tfrac{1}{2}.$ 
Since 
\newline
$$\hspace{-8.6cm}\mathsf{H}_{\rho}^{\prime}(w)\mathsf{F}_{\rho}(w)-\mathsf{H}_{\rho}(w)\mathsf{F}^{\prime}_{\rho}(w)$$
\begin{align*}
\begin{split}
={}&\sum\limits_{l\in\mathbb{Z}}\left[2(w+l)-\dfrac{2\rho (w+l)^3}{|w+l|}\right]e^{-2\rho|w+l|}\sum\limits_{n\in\mathbb{Z}}e^{-2\rho|w+n|}\\&\hspace{4cm}+2\rho\sum\limits_{l\in\mathbb{Z}}(w+l)^2e^{-2\rho|w+l|}\sum\limits_{n\in\mathbb{Z}}\dfrac{w+n}{|w+n|}e^{-2\rho|w+n|}\\
={}&2\sum\limits_{l\in\mathbb{Z}}\sum\limits_{n\in\mathbb{Z}}\left(w+l-\rho\dfrac{(w+l)^3}{|w+l|}+\rho(w+l)^2\dfrac{w+n}{|w+n|}\right)e^{-2\rho\left(|w+l|+|w+n|\right)}\\
={}&2\sum\limits_{l\in\mathbb{Z}}\sum\limits_{n\in\mathbb{Z}}\left\{w+l+\rho (w+l)^2\left(\sgn{(w+n)}-\sgn{(w+l)}\right)\right\}e^{-2\rho\left(|w+l|+|w+n|\right)}\\
={}&2\Bigg\{\sum\limits_{l=0}^{\infty}\sum\limits_{n=0}^{\infty}\left(w+l\right)e^{-2\rho\left(2w+n+l\right)}+\sum\limits_{l=1}^{\infty}\sum\limits_{n=0}^{\infty}\left[w-l+2\rho (w-l)^2\right]e^{-2\rho\left(n+l\right)}\\&+\sum\limits_{l=0}^{\infty}\sum\limits_{n=1}^{\infty}\left[w+l-2\rho (w+l)^2\right]e^{-2\rho\left(n+l\right)}+\sum\limits_{l=1}^{\infty}\sum\limits_{n=1}^{\infty}\left(w-l\right)e^{-2\rho\left(-2w+n+l\right)}\Bigg\}\\
={}&2(I_1(w)+I_2(w))>0,
\end{split}
\end{align*}
$\mathsf{G}_{\rho}(w)$ is an increasing function in $[0,\tfrac{1}{2}]$ for each $\rho\ge 0.74$.
\end{proof}
\begin{lem}\label{positiveF}
For each $\rho>0$, $\widetilde{G_{\rho}}(w)$ attains its maximum at $w=\tfrac{1}{2}$ in $[0,1]$. 
\end{lem}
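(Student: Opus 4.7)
The plan is to exploit the rational structure of $|g_2|^2$ and obtain a closed-form expression for $\widetilde{G}_\rho$ via the Mittag--Leffler expansion of the cotangent. Assuming $|a|\neq|b|$ (the degenerate case $|a|=|b|$ follows by continuity), set $\tilde{a}=2\pi|a|\rho$ and $\tilde{b}=2\pi|b|\rho$. The partial fraction decompositions
\begin{align*}
|g_2(\rho x)|^2 &= \frac{1}{\tilde{a}^2-\tilde{b}^2}\left[\frac{\tilde{a}^2}{1+\tilde{a}^2 x^2}-\frac{\tilde{b}^2}{1+\tilde{b}^2 x^2}\right], \\
x^2|g_2(\rho x)|^2 &= \frac{1}{\tilde{a}^2-\tilde{b}^2}\left[\frac{1}{1+\tilde{b}^2 x^2}-\frac{1}{1+\tilde{a}^2 x^2}\right]
\end{align*}
reduce $\widetilde{F}_\rho$ and $\widetilde{H}_\rho$ to linear combinations of the auxiliary sums $F_c(w):=\sum_{n\in\mathbb{Z}}(1+c^2(w+n)^2)^{-1}$ for $c\in\{\tilde{a},\tilde{b}\}$. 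The Mittag--Leffler identity $\pi\cot(\pi z)=\sum_{n\in\mathbb{Z}}(z-n)^{-1}$ (equivalently, Poisson summation applied to the Lorentzian) yields the closed form
\begin{equation*}
F_c(w)=\frac{\pi}{c}\cdot\frac{\sinh(2\pi/c)}{\cosh(2\pi/c)-\cos(2\pi w)},
\end{equation*}
and hence
\begin{equation*}
\widetilde{G}_\rho(w)=\frac{F_{\tilde{b}}(w)-F_{\tilde{a}}(w)}{\tilde{a}^2 F_{\tilde{a}}(w)-\tilde{b}^2 F_{\tilde{b}}(w)}.
\end{equation*}

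Since $|g_2|^2$ is even, $\widetilde{G}_\rho$ is $1$-periodic and symmetric about $w=1/2$, so it suffices to show that $\widetilde{G}_\rho$ is nondecreasing on $[0,1/2]$. Differentiating the quotient and expanding, one verifies that the cross-terms cancel and the numerator of $\widetilde{G}_\rho'(w)$ collapses to $(\tilde{a}^2-\tilde{b}^2)\bigl(F_{\tilde{a}}F_{\tilde{b}}'-F_{\tilde{a}}'F_{\tilde{b}}\bigr)(w)$. Substituting the closed form for $F_c$, this Wronskian-type factor simplifies, after pulling out a strictly positive common factor, to a positive multiple of
\begin{equation*}
-\sin(2\pi w)\cdot\bigl[\cosh(2\pi/\tilde{a})-\cosh(2\pi/\tilde{b})\bigr].
\end{equation*}
Because $c\mapsto\cosh(2\pi/c)$ is strictly decreasing on $(0,\infty)$, the bracket carries the opposite sign to $\tilde{a}^2-\tilde{b}^2$, so their product is nonnegative; combined with $\sin(2\pi w)\ge 0$ on $[0,1/2]$, this forces the numerator of $\widetilde{G}_\rho'(w)$ to be nonnegative there, irrespective of whether $\tilde{a}>\tilde{b}$ or $\tilde{a}<\tilde{b}$. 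Thus $\widetilde{G}_\rho$ is nondecreasing on $[0,1/2]$ and attains its maximum on $[0,1]$ at $w=1/2$; the edge case $|a|=|b|$ is recovered by letting $\tilde{b}\to\tilde{a}$ and using continuity of $\widetilde{G}_\rho$ in the parameters.

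The main technical obstacle will be the careful algebraic bookkeeping that produces the clean Wronskian form $(\tilde{a}^2-\tilde{b}^2)(F_{\tilde{a}}F_{\tilde{b}}'-F_{\tilde{a}}'F_{\tilde{b}})$, and ensuring that the sign pattern survives both orderings of $\tilde{a}$ and $\tilde{b}$. In contrast to the explicit geometric-series manipulations used for $g_1$ in Lemma~\ref{positiveG} (which required the auxiliary Lemma~\ref{positiveI1}), here the Mittag--Leffler closed form for $F_c$ trades iterated sum splittings for a single quotient differentiation and a monotonicity check on $\cosh$.
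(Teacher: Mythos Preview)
Your argument is correct and follows essentially the same route as the paper's: both reduce $\widetilde{H}_\rho$ and $\widetilde{F}_\rho$ via partial fractions and the cotangent (Mittag--Leffler) expansion to a closed form that is a M\"obius function of $\cos 2\pi w$, differentiate, and verify that a single algebraic combination is positive (the paper packages this as $cd(AD+BC)=cd(d^2-c^2)\sinh c\,\sinh d\,(\cosh d-\cosh c)>0$, which is precisely your Wronskian factor in different variables). One wording slip to fix: since the bracket $[\cosh(2\pi/\tilde a)-\cosh(2\pi/\tilde b)]$ and $\tilde a^2-\tilde b^2$ have \emph{opposite} signs, their product is non\emph{positive}, and it is the extra factor $-\sin(2\pi w)\le 0$ on $[0,1/2]$ that then makes the full numerator nonnegative.
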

\begin{proof}
To prove our result, we make use of the following identities
$$\sum\limits_{n\in\mathbb{Z}}\dfrac{1}{w-n}=\pi\cot\pi w,~w\notin\mathbb{Z},$$
and
$$\cot(x+iy)-\cot(x-iy)=\dfrac{2i\sinh2y}{\cos2x-\cosh2y}.$$
We first assume that $a\neq \pm b.$ Let $c=\tfrac{\rho}{a}$ and $d=\tfrac{\rho}{b}$.
By applying the partial fraction expansion, we have
\begin{align*}
\widetilde{H_\rho}(w)={}&\sum\limits_{n\in\mathbb{Z}}\frac{c^2d^2(w+n)^2}{(c^2+4\pi^2(w+n)^2)(d^2+4\pi^2(w+n)^2)}\\
={}&\dfrac{c^2d^2}{4\pi^2(d^2-c^2)}\sum\limits_{n\in\mathbb{Z}}\left[\dfrac{d^2}{d^2+4\pi^2(w+n)^2}-\dfrac{c^2}{c^2+4\pi^2(w+n)^2}\right]\\
={}&\dfrac{c^2d^2}{8\pi^2(d^2-c^2)}\sum\limits_{n\in\mathbb{Z}}\left[\tfrac{d}{d+2\pi i(w+n)}+\tfrac{d}{d-{2\pi i}(w+n)}-\tfrac{c}{c+{2\pi i}(w+n)}-\tfrac{c}{c-{2\pi i}(w+n)}\right] \\
={}&\dfrac{c^2d^2}{8\pi^2(d^2-c^2)}\Big[\dfrac{id}{2}\left\{\cot\left(\pi\left(w+\tfrac{id}{2\pi }\right)\right)-\cot\left(\pi\left(w-\tfrac{id}{2\pi}\right)\right)\right\}\\&\hspace{4.5cm}-\dfrac{ic}{2}\left\{\cot\left(\pi\left(w+\tfrac{ic}{2\pi}\right)\right)-\cot\left(\pi\left(w-\tfrac{ic}{2\pi}\right)\right)\right\}\Big]\\
={}&\dfrac{c^2d^2}{8\pi^2(d^2-c^2)}\left[\dfrac{c\sinh{c}}{\cos{2\pi w}-\cosh{c}}-\dfrac{d\sinh{d}}{\cos{2\pi w}-\cosh{d}}\right]
\end{align*}
\begin{align*}
={}&\dfrac{c^2d^2}{8\pi^2(d^2-c^2)}\dfrac{d\cosh{c}\sinh{d}-c\cosh{d}\sinh{c}-\left(d\sinh{d}-c\sinh{c}\right)\cos{2\pi w}}{\left(\cos{2\pi w}-\cosh{d}\right)\left(\cos{2\pi w}-\cosh{c}\right)}
\end{align*}
and
\begin{align*}
\widetilde{F_\rho}(w)
={}&\sum\limits_{n\in\mathbb{Z}}\frac{c^2d^2}{(c^2+4\pi^2(w+n)^2)(d^2+4\pi^2(w+n)^2)}\\
={}&\dfrac{c^2d^2}{(d^2-c^2)}\sum\limits_{n\in\mathbb{Z}}\Big[\dfrac{1}{c^2+4\pi^2(w+n)^2}-\dfrac{1}{d^2+4\pi^2(w+n)^2}\Big]\\
={}&\dfrac{c^2d^2}{2(d^2-c^2)}\sum\limits_{n\in\mathbb{Z}}\Big[\tfrac{1}{c}\left(\tfrac{1}{c+2\pi i(w+n)}+\tfrac{1}{c-2\pi i(w+n)}\right)-\tfrac{1}{d}\left(\tfrac{1}{d+2\pi i(w+n)}+\tfrac{1}{d-2\pi i(w+n)}\right)\Big]\\
={}&\dfrac{c^2d^2}{4(d^2-c^2)}\Big[-\tfrac{i} {c}\cot\left(\pi\left(w-\tfrac{ic}{2\pi }\right)\right)+\tfrac{i}{c}\cot\left(\pi\left(w+\tfrac{ic}{2\pi }\right)\right)\\&\hspace{5.3cm}+\tfrac{i}{d}\cot\left(\pi\left(w-\tfrac{id}{2\pi }\right)\right)-\tfrac{i}{d}\cot\left(\pi\left(w+\tfrac{id}{2\pi }\right)\right)\Big]\\
={}&\dfrac{cd}{2(d^2-c^2)}\left[\dfrac{c\sinh{d}}{\cos{2\pi w}-\cosh{d}}-\dfrac{d\sinh{c}}{\cos{2\pi w}-\cosh{c}}\right]
\end{align*}
\begin{align*}
={}&\dfrac{cd}{2(d^2-c^2)}\dfrac{d\cosh{d}\sinh{c}-c\cosh{c}\sinh{d}+ \left(c\sinh{d}-d\sinh{c}\right)\cos{2\pi w}}{\left(\cos{2\pi w}-\cosh{d}\right)\left(\cos{2\pi w}-\cosh{c}\right)}.
\end{align*}
Let $A:=d\cosh{c}\sinh{d}-c\cosh{d}\sinh{c}$, $B:=d\sinh{d}-c\sinh{c}$, $C:=d\cosh{d}\sinh{c}-c\cosh{c}\sinh{d},$ and $D:=c\sinh{d}-d\sinh{c}.$
Then
\begin{eqnarray}\label{what}
\widetilde{G_\rho}(w)=\dfrac{cd}{4\pi^2}\dfrac{A-B\cos{2\pi w}}{C+D\cos{2\pi w}}.
\end{eqnarray}
Differentiating \eqref{what} with respect to $w$, we get
\begin{eqnarray*}
\widetilde{G_\rho}^{\prime}(w)=\dfrac{cd}{2\pi}\dfrac{(AD+BC)\sin{2\pi w}}{(C+D\cos{2\pi w})^2}.
\end{eqnarray*}
Notice that  $cd(AD+BC)=cd(d^2-c^2)\sinh c\sinh d\left(\cosh d-\cosh c\right)>0$.
Therefore $\widetilde{G_{\rho}}^{\prime}(w)\ge 0$ in $[0,\tfrac{1}{2}].$
By limiting argument, we can easily show that $\widetilde{G_{\rho}}(w)$ is an increasing function in $[0,\tfrac{1}{2}]$ for $a=\pm b$.
\end{proof}
 We can easily show that
$$M_{\widehat{{g_1}_\gamma},1/\alpha}=\alpha^2\esssup\limits_{w\in[0,1]}\dfrac{\sum\limits_{j\in\mathbb{Z}}(w+j)^2e^{-2\alpha\gamma|w+j|}}{\sum\limits_{j\in\mathbb{Z}}e^{-2\alpha\gamma|w+j|}}\longrightarrow\dfrac{\alpha^2}{4}~\text{as}~ \gamma\longrightarrow\infty.$$ One can observe that Lemma \ref{common} is still true after replacing the Gaussian function with the two-sided exponential. Arguing as in Theorem \ref{B1gammalem}, we can prove the following theorem using Lemma \ref{positiveG}. 
\begin{thm}\label{maintse}
If $\phi$ is a function of type-II, then $\lim\limits_{\gamma\rightarrow \infty} M_{\phi_\gamma,1/\alpha}=\dfrac{\alpha^2}{4}.$
Consequently, if $\alpha\beta<1,$ then there exists a $\gamma>0$ depending on $\alpha\beta$ such that $\mathcal{G}(\phi_\gamma,\alpha,\beta)$ forms a frame for $L^2(\mathbb{R}).$ 
\end{thm}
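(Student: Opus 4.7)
The plan is to imitate the proof of Theorem \ref{B1gammalem} line by line, with the two-sided exponential $g_1(w)=e^{-|w|}$ taking the role of the Gaussian and Lemma \ref{positiveG} replacing Lemma \ref{posF}. First I would extend Lemma \ref{common} to the type-II setting, proving that for every type-II window $\phi$ one has $M_{\phi,h}\le M_{\widehat{g_1},h}$ for all $h>0$. The paper itself remarks that this extension is valid, and indeed the telescoping inequality $\widetilde B_\mu(w)-\widetilde B_{\mu+1}(w)\ge 0$ from Lemma \ref{common} depends only on reorganizing the double sum into a manifestly nonnegative expression proportional to $s_{\mu+1}k^2(2w+2n+k)^2$; this manipulation acts only on the one-sided-exponential factors $(1+2\pi iv_{\mu+1}w)^{-1}e^{-2\pi iv_{\mu+1}w}$ and never uses the specific form of $\psi$. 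Passage from finite $N$ to $N=\infty$ is again by dominated convergence.

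Next I would handle the base case. The paper records $M_{\widehat{g_1}_\gamma,1/\alpha}=\alpha^2\,\esssup_{w\in[0,1]}\mathsf{G}_{\rho}(w)$ for a parameter $\rho$ that is large when $\gamma$ is large. By Lemma \ref{positiveG}, as soon as $\rho\ge 0.74$ the essential supremum is attained at $w=1/2$, so
\[
M_{\widehat{g_1}_\gamma,1/\alpha}=\alpha^2\,\frac{\sum_{j\in\mathbb{Z}}(j+\tfrac12)^2\,e^{-2\rho|j+\frac12|}}{\sum_{j\in\mathbb{Z}}e^{-2\rho|j+\frac12|}}.
\]
The indices $j=0$ and $j=-1$ each contribute the dominant weight $e^{-\rho}$, and the remaining terms are $O(e^{-3\rho})$; pulling out $e^{-\rho}$ and passing to the limit gives $\mathsf{G}_\rho(1/2)\to 1/4$, so $M_{\widehat{g_1}_\gamma,1/\alpha}\to \alpha^2/4$. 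Combined with the extended Lemma \ref{common} and the universal lower bound $M_{\phi_\gamma,1/\alpha}\ge \alpha^2/4$ from \eqref{Msh}, squeezing yields $\lim_{\gamma\to\infty}M_{\phi_\gamma,1/\alpha}=\alpha^2/4$.

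For the frame conclusion, fix $(\alpha,\beta)$ with $\alpha\beta=1-\epsilon<1$. Since $2\beta\sqrt{M_{\phi_\gamma,1/\alpha}}\to \alpha\beta=1-\epsilon$, there exists $\delta=\delta(\alpha\beta)>0$ such that $2\beta\sqrt{M_{\phi_\gamma,1/\alpha}}<1$ for all $\gamma\ge\delta$. Applying Theorem \ref{frame} to the Fourier-dual system $\mathcal{G}(\widehat{\phi_\gamma},\beta,\alpha)$, whose frame property is equivalent to that of $\mathcal{G}(\phi_\gamma,\alpha,\beta)$ via $\mathcal{F}(\phi)=\mathcal{F}(\widehat{\phi})$, completes the argument. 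The only subtle point is the threshold $\rho\ge 0.74$ in Lemma \ref{positiveG}: unlike the Gaussian case it is not available for arbitrarily small $\rho$, but since we need only the asymptotic regime $\gamma\to\infty$ this restriction costs nothing. The extension of Lemma \ref{common} is purely algebraic and poses no difficulty.
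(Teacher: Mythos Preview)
Your plan is precisely the paper's: extend Lemma~\ref{common} to the type-II setting (same algebraic telescoping), invoke Lemma~\ref{positiveG} to pin the supremum of $\mathsf{G}_\rho$ at $w=\tfrac12$ for large $\rho$, compute the limit $\mathsf{G}_\rho(\tfrac12)\to\tfrac14$, and squeeze using \eqref{Msh}. The one slip is in your frame conclusion: the inequality $2\beta\sqrt{M_{\phi_\gamma,1/\alpha}}<1$ is \emph{not} the hypothesis of Theorem~\ref{frame} for the dual system $\mathcal{G}(\widehat{\phi_\gamma},\beta,\alpha)$---that would require $M_{\widehat{\phi_\gamma},1/\alpha}$, and $M_{\widehat{\phi_\gamma},h}\ne M_{\phi_\gamma,h}$ since type-II windows are not Fourier self-dual. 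The fix is trivial and matches the paper's own route in Theorem~\ref{B1gammalem}: the limit statement holds for every positive parameter, so apply it with $\beta$ in place of $\alpha$ to obtain $2\alpha\sqrt{M_{\phi_\gamma,1/\beta}}\to\alpha\beta<1$, and then invoke Theorem~\ref{frame} directly for $\mathcal{G}(\phi_\gamma,\alpha,\beta)$ without any passage to the Fourier side.
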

Again based on Theorem \ref{frame}, we have drawn a region belonging to the frame set for the two-sided exponential function using Lemmas \ref{positiveG} and \ref{positiveF}. Please refer to Figure \ref{tsefig}.
\begin{figure}[H]
 \centering \begin{subfigure}[b]{0.48\textwidth}
\centering
\includegraphics[width=\textwidth]{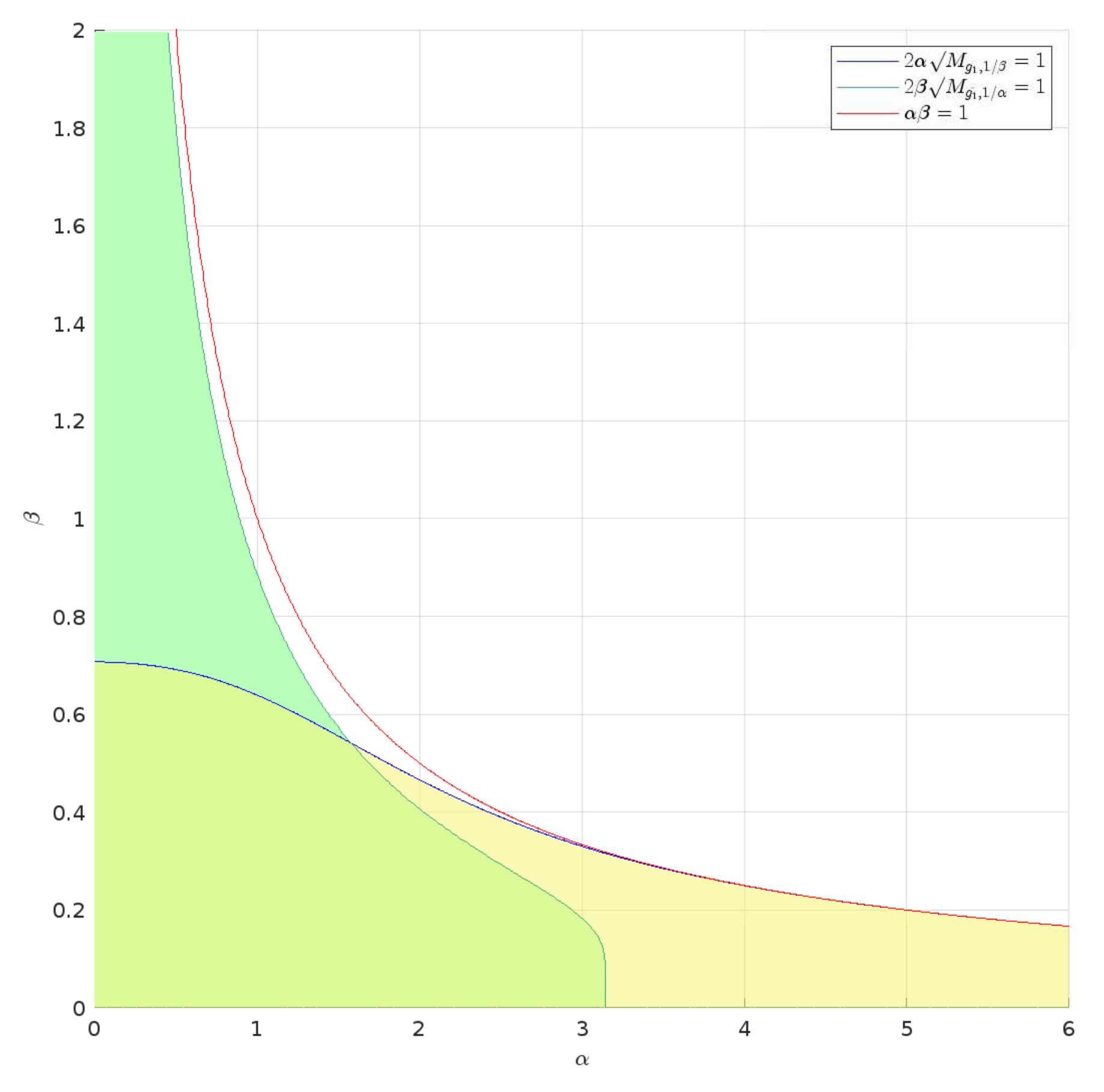}
\caption{}
\end{subfigure}
\hfill
\begin{subfigure}[b]{0.48\textwidth}
         \centering
\includegraphics[width=\textwidth]{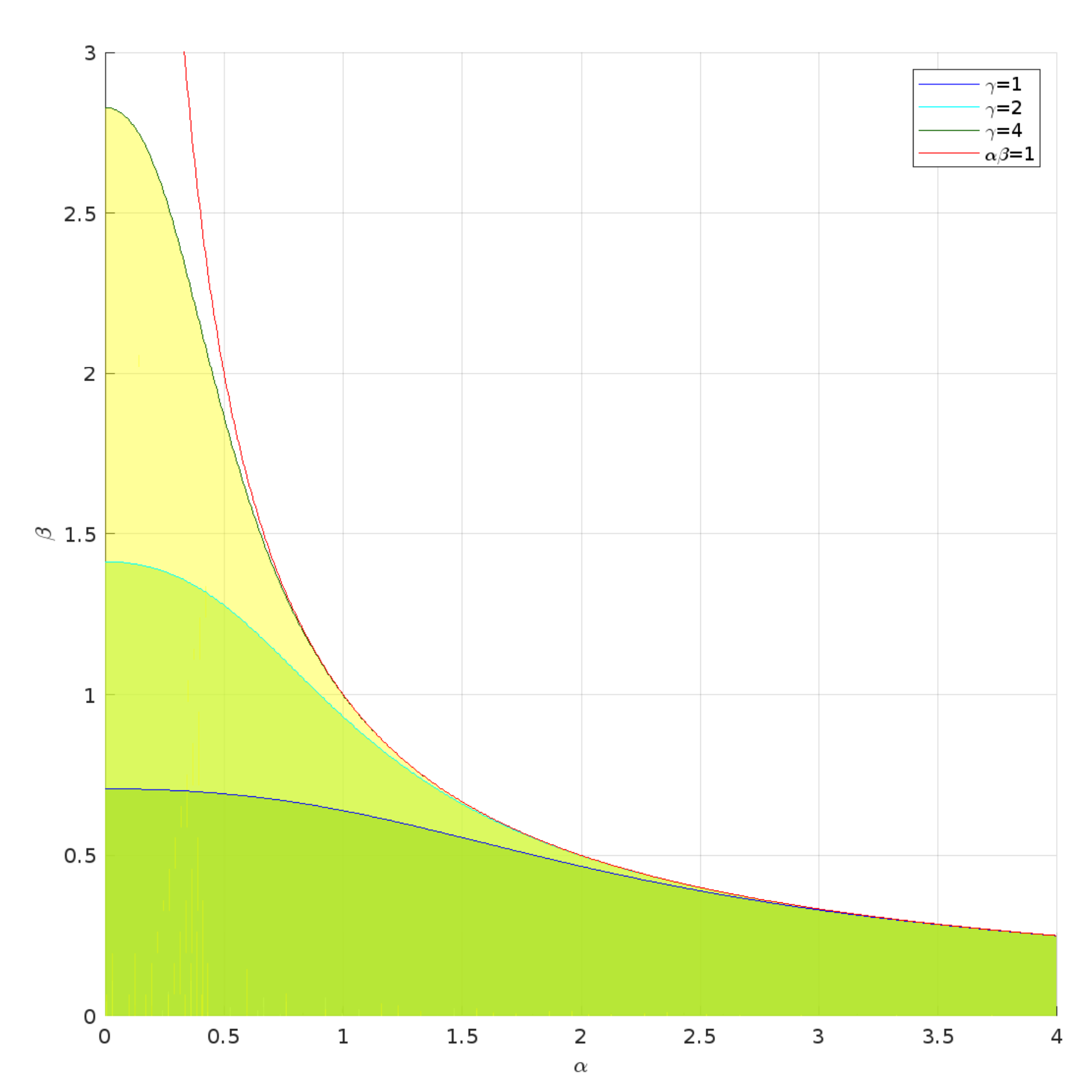}
\caption{}
\end{subfigure}
\caption{In the sketch (A) the greenish color represents the frame region of $g_1(x)=e^{-|x|}$ which is obtained from the condition $0<\alpha<(2\sqrt{M_{g_1,1/\beta}})^{-1}$ and the yellow region is obtained by from the condition $0<\beta<(2\sqrt{M_{\widehat{g_1},1/\alpha}})^{-1}$. Sketch (B) demonstrates the validity of Theorem \ref{maintse} for functions of type-II.}
\label{tsefig}
\end{figure}
\section{Gabor frames with Hermite functions}
In this section, we find a new region for Hermite functions $h_n$ defined in \eqref{hermitef}. Since $\sum_{j\in\mathbb{Z}}|h_n(w+\beta j)|^2$ is a positive continuous function for $w\in[0,\beta],$ there exists a constant $C>0$ such that $\essinf\limits_{w\in[0,\beta]}\sum_{j\in\mathbb{Z}}|h_n(w+\beta j)|^2\ge C.$ Therefore $h_n$ is a stable generator for $V_{1/\beta}(h_n).$ 
Let us define
$$\Delta_n(\alpha,\beta):=2\alpha\sqrt{M_{h_n,1/\beta}}~\text{and}~\widehat{\Delta_n}(\alpha,\beta):=2\beta\sqrt{M_{\widehat{h_n},1/\alpha}},~n=0,1,2,3,\dots.$$ 
Since the Fourier transform of  $h_n$ is $\widehat{h_n}(w)=(-i)^n h_n(w)$, it is clear that $\widehat{\Delta_n}(\alpha,\beta)=\Delta_n(\beta,\alpha).$
Consequently, if $\Delta_n(\alpha,\beta)<1$ or $\Delta_n(\beta,\alpha)<1$, then $\mathcal{G}(h_n,\alpha,\beta)$ forms a frame for $L^2(\mathbb{R})$ from Theorem \ref{frame}. We have already discussed the frame set of the Gaussian function $h_0(x)=e^{-\pi x^2}$ in the previous section. We used MATLAB to draw the graph of $\Delta_n(\alpha,\beta)$ and to determine the frame region of $h_n$, for $n=1,2,3,4,5.$ The authors in \cite{Lyu} proved that if $\alpha\beta=3/5,$ then $\mathcal{G}(h_1,\alpha,\beta)$ forms a frame in $L^2(\mathbb{R}).$ Lemvig \cite{hergabor} provided some points which do not belong in $\{(\alpha,\beta)\in \mathbb{R}^2_+:~\alpha\beta<1\}$ for $h_n$ with $n = 4,5,4m + 2, 4m + 3$,  $m\in\mathbb{N}_0$. Based on the aforementioned results, we expect that our frame
region for $h_{2n}$ might be close to the frame set. Please refer to Figure \ref{hermitefig}.
\begin{figure}[H]
     \centering
     \begin{subfigure}[b]{0.48\textwidth}
         \centering       \includegraphics[width=\textwidth]{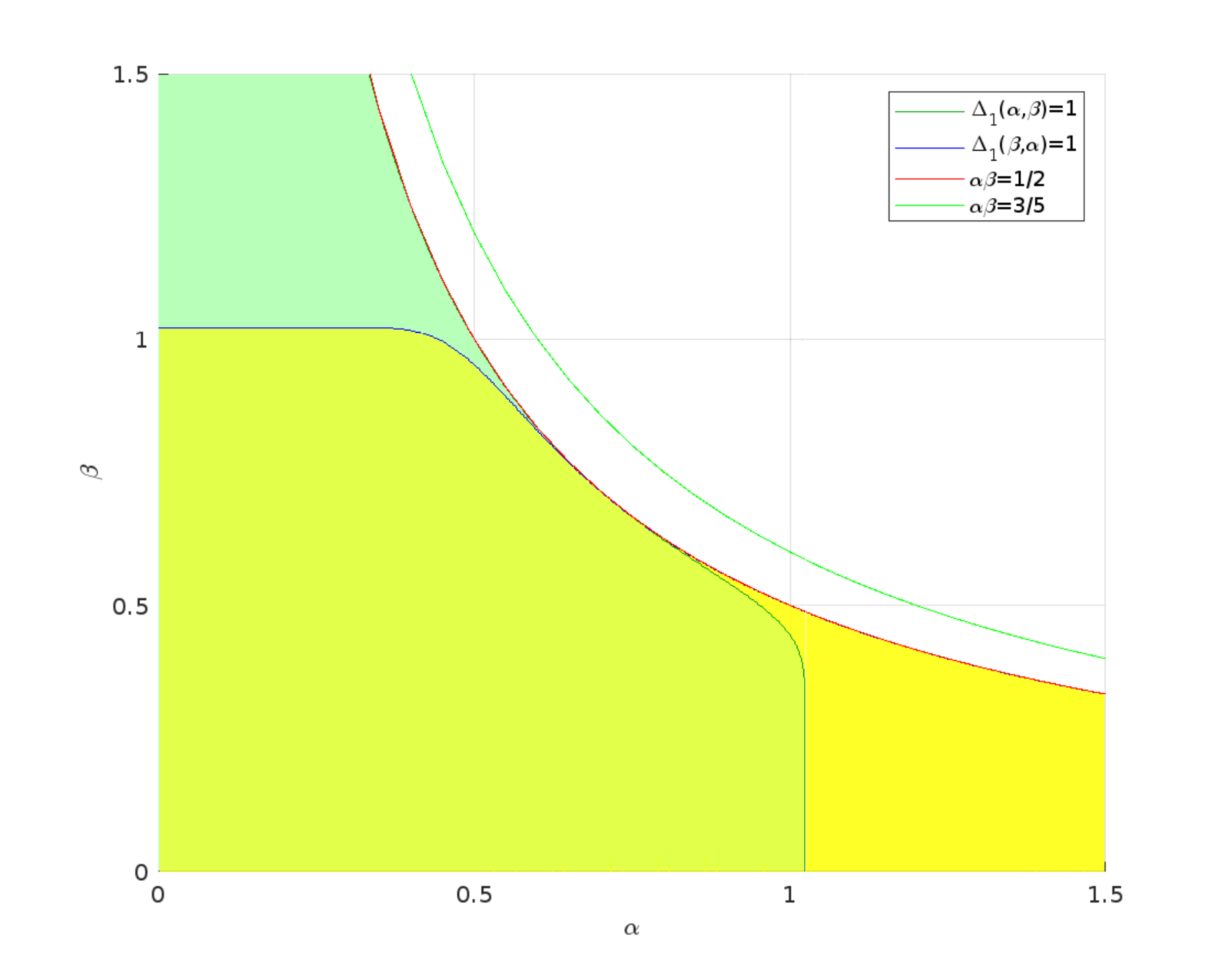}
         \caption{}
     \end{subfigure}
     \begin{subfigure}[b]{0.48\textwidth}
         \centering
         \includegraphics[width=\textwidth]{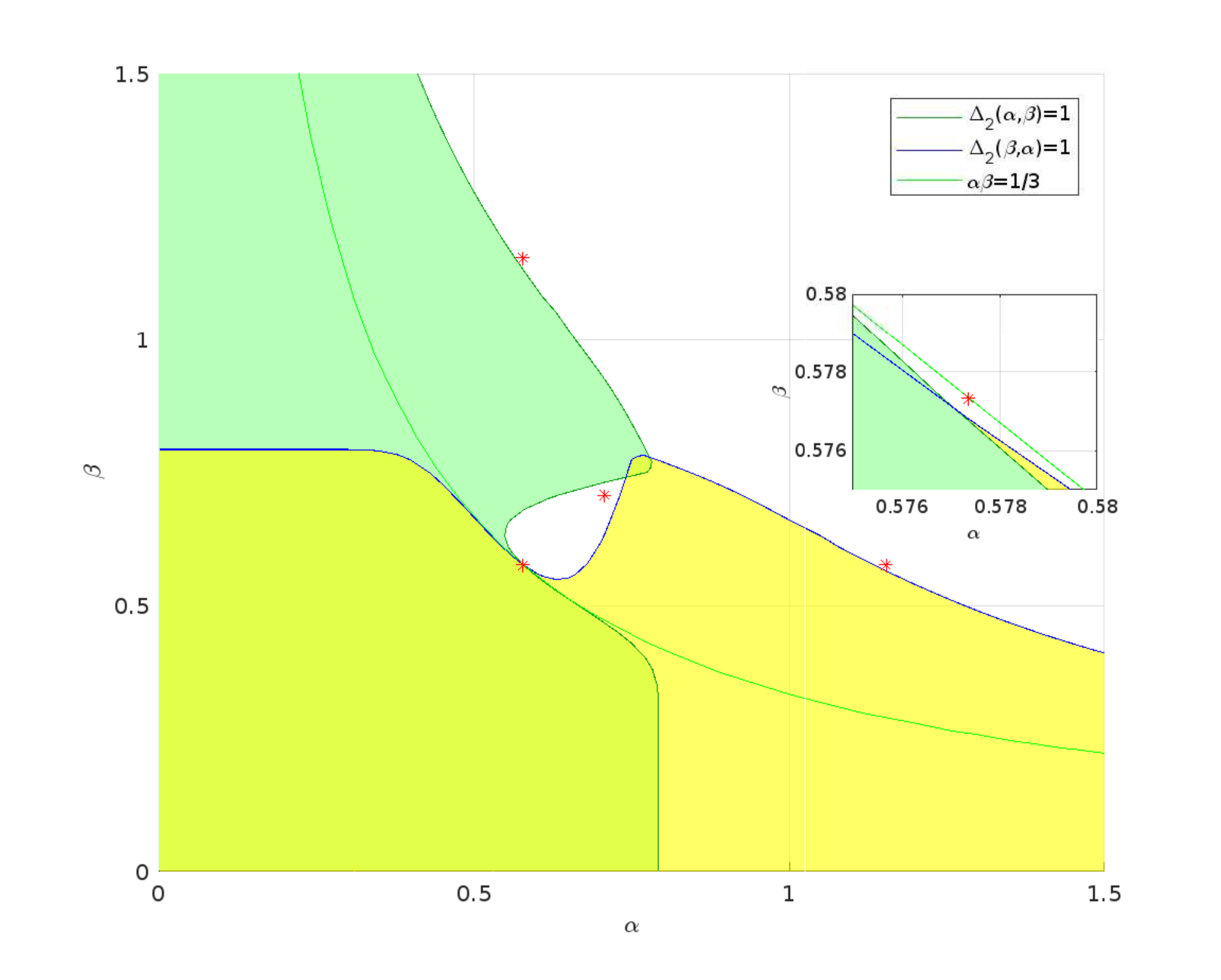}
         \caption{}
     \end{subfigure}
     \begin{subfigure}[b]{0.46\textwidth}
     \ContinuedFloat
         \centering
         \includegraphics[width=\textwidth]{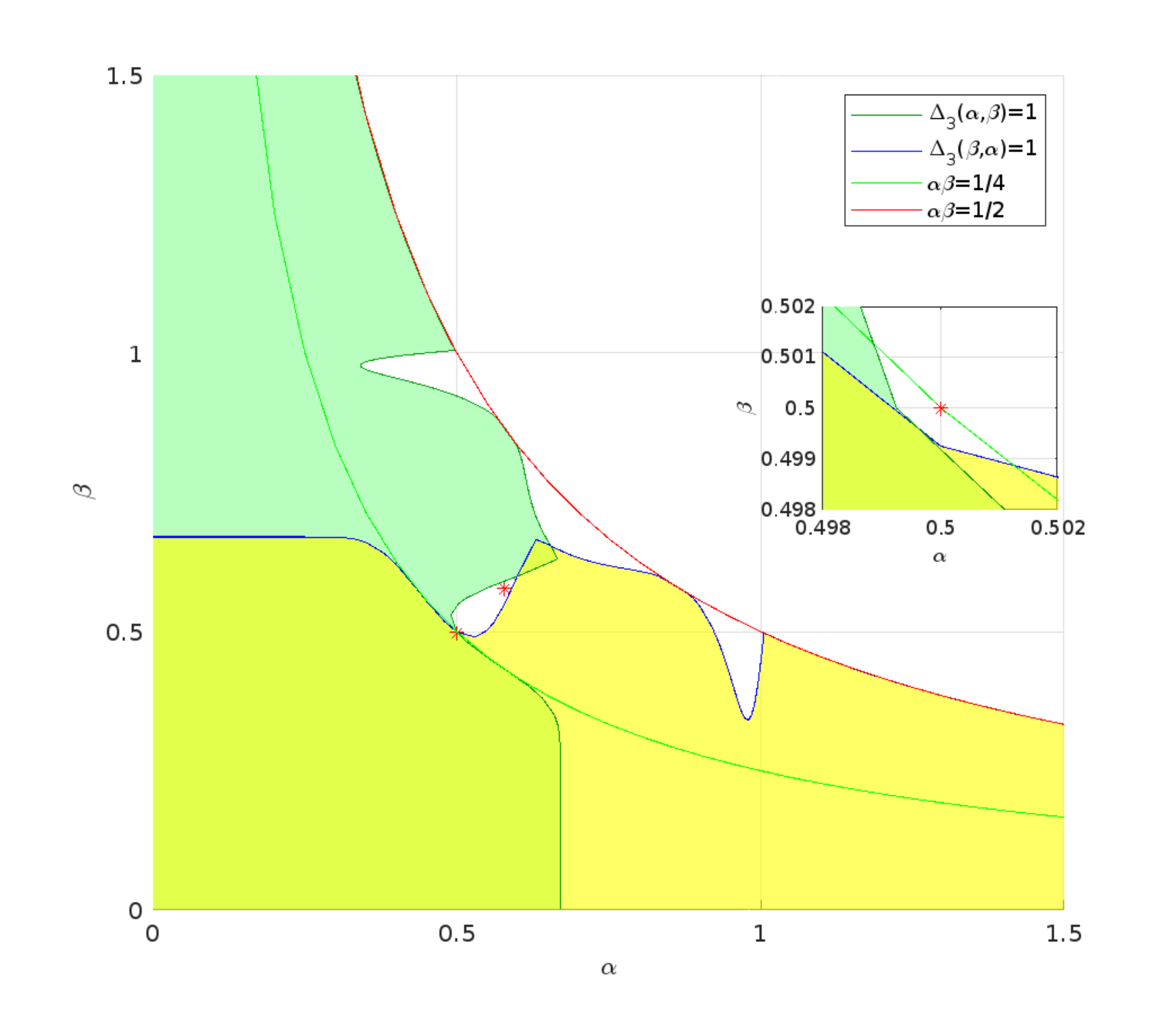}
         \caption{}
     \end{subfigure}
     \begin{subfigure}[b]{0.48\textwidth}
         \centering
         \includegraphics[width=\textwidth]{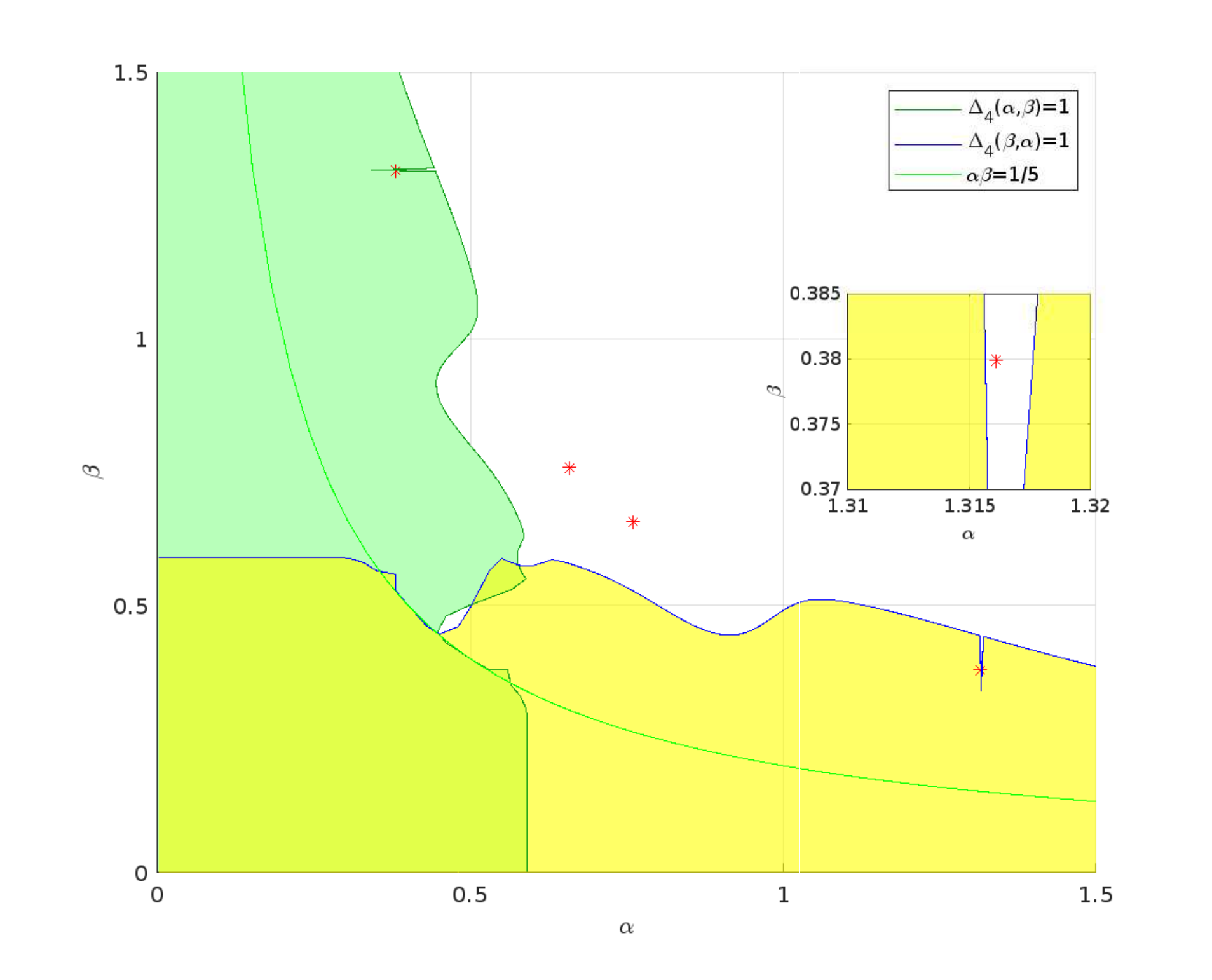}
         \caption{}
     \end{subfigure}
     \begin{subfigure}[b]{0.48\textwidth}
     \centering
         \centering
\includegraphics[width=\textwidth]{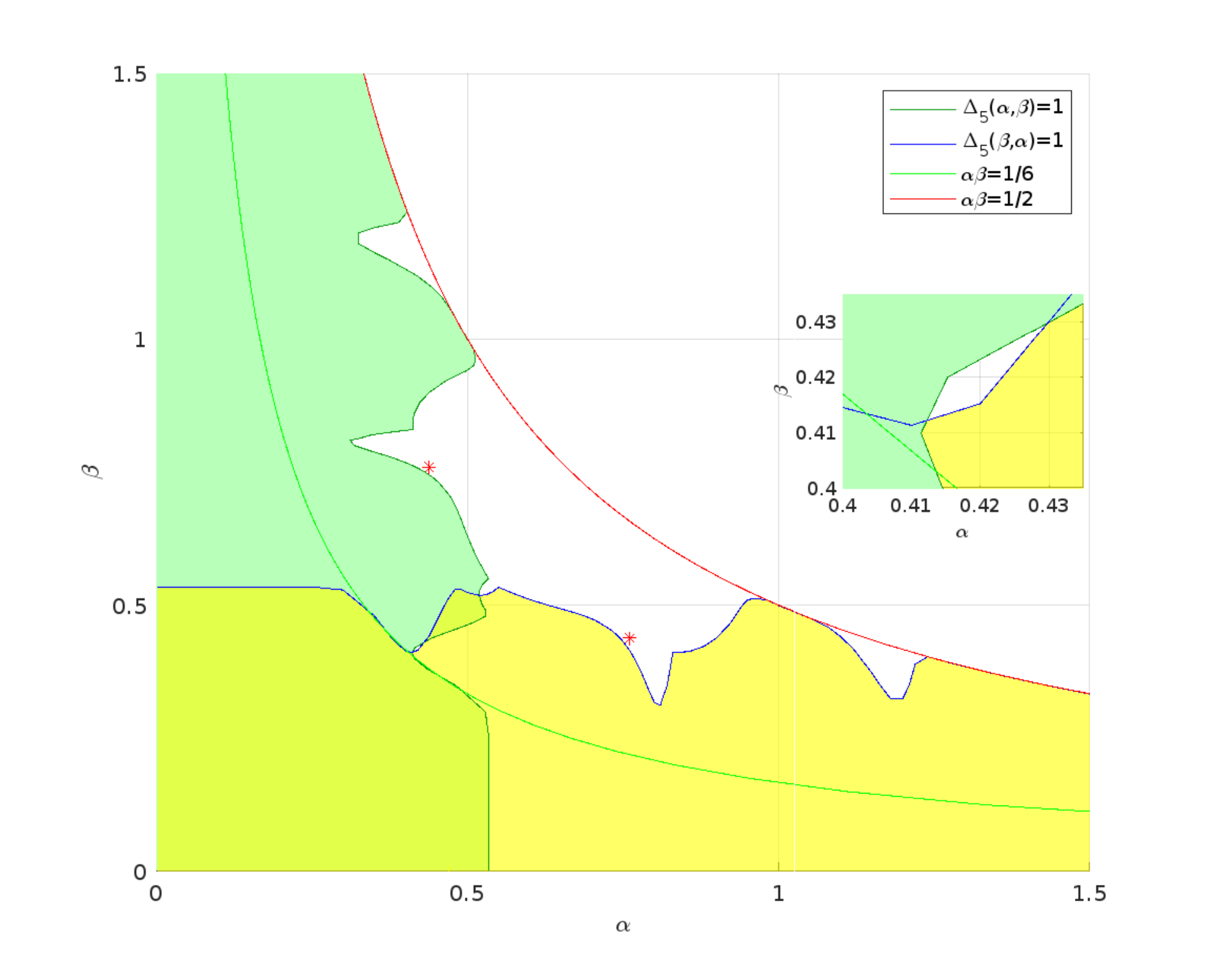}
         \caption{}  
         \end{subfigure}
\label{fig: three graphs}
\caption{(A)-(E) illustrate the regions belonging to the frame set for Hermite functions $h_1-h_5$ respectively. In sketch (A), the green line $\alpha\beta=3/5$ belongs to $\mathcal{F}(h_1)$ \cite{Lyu}. The author in \cite{hergabor} proved that `$\ast$' points do not lie in the frame set. The red line contains the points $(\alpha, \beta)$ which do not lie in the frame set \cite{mystery, Lyu}. The subgraphs in (B), (C), and (D) explain that our results did not contradict the results from \cite{hergabor}. The subgraph in (E) illustrates that our frame region does not include the white region.}
\label{hermitefig}
\end{figure}
\section{Appendix}
\captionsetup{font=footnotesize}
\begin{lstlisting}[language=MATLAB, caption= \underline{MATLAB code for the Budan-Fourier theorem and Sturm's algorithm}]
function [m] = sign_chng(x) %no of sign change in array x
c=0;
    for j=1:length(x)-1
        if ((x(j)>0)&&(x(j+1)<0))||((x(j)<0)&&(x(j+1)>0))
          c=c+1;
        else
            continue
        end
    end
    m=c;
end
%Y=coefficient array; m(n)=left(right) end point of [a,b] 
%Budan-Fourier theorem
function w = Budan_Fourier(Y,m,n)
    syms x
    P= poly2sym(Y, x)
    for i=1:length(Y)
       Vl(i)=subs(diff(P,i-1),x,m);
       Vr(i)=subs(diff(P,i-1),x,n);
    end
 fprintf('VFB(a):[%s]\n', join(string(Vl),','));
 fprintf('VFB(b):[%s]\n',join(string(Vr),','))
 V1=nonzeros(Vl); V2=nonzeros(Vr);
fprintf('V_F(%2.2f) is %d\n',m,sign_chng(V1))
fprintf('V_F(%2.2f) is %d\n',n,sign_chng(V2))
w=sign_chng(V1)-sign_chng(Vr);%maximum no of roots
end
%Sturm's algorithm
function w = Sturm(Y,m,n)
    syms x
    P= poly2sym(Y, x)
    p=subs(P, x, m)*subs(P, x, n);
    if p==0
        disp('Sturm method not possible')
        w=NaN;%cannot find roots
    else
      DP=diff(P);
      VL(1)=subs(P,x,m); VL(2)=subs(DP,x,m);
      VR(1)=subs(P,x,n); VR(2)=subs(DP,x,n);
 for i=3:length(Y)
    [R1,Q1] = polynomialReduce(P,DP);
    VL(i)=subs(-R1,x,m);VR(i)=subs(-R1,x,n); 
    d=coeffs(R1);
    if length(d)~=1
        P=DP; DP=-R1;
    else
        break
    end
 end
    fprintf('VS(a):[%s]\n', join(string(VL),','));
    fprintf('VS(b):[%s]\n',join(string(VR),','))
    VS1=nonzeros(VL); VS2=nonzeros(VR);
    fprintf('V_S(%2.2f) is %d\n',m,sign_chng(VS1))
    fprintf('V_S(%2.2f) is %d\n',n,sign_chng(VS2))
    w=sign_chng(VS1)-sign_chng(VS2);
   end
end
\end{lstlisting}
\begin{table}[!h]
\begin{center}
\begin{tabular}{ |p{2.5cm}|p{2.8cm}|p{2.8cm}|p{2cm}|p{2.5cm}|}
\hline
$F(\beta)$& $\beta=1$ &$\beta=3/2$&$\mathcal{N}_{q_n}(1,3/2]$& Sign of $F(\beta)$\\
 \hline 
 \multirow{5}{8em}{$P_\beta(1)=q_0(\beta)$}   & $q_0=3$    &$q_0=66.75$&\multirow{5}{2em}{$0$} & \multirow{5}{2em}{$>0$}   \\
 &$q_0^\prime=31$   & $q_0^\prime=287.5$ & &\\
&$q_0^{\prime\prime}=192$   & $q_0^{\prime\prime}=954$ & &\\
&$q_0^{\prime\prime\prime}=804$   & $q_0^{\prime\prime\prime}=2244$ & &\\
&$q_0^{(4)}=2880$   & $q_0^{(4)}=2880$ & &\\
 \hline
 \multirow{4}{8em}{$P^\prime_\beta(1)=24(\beta-1)q_1(\beta)$}   & $q_1=1$    &$q_1=5.25$&\multirow{4}{2em}{$0$} & \multirow{4}{2em}{$\ge0$}   \\
 &$q_1^\prime=4$   & $q_1^\prime=14.5$ & &\\
  &$q_1^{\prime\prime}=12$   & $q_1^{\prime\prime}=30$ & &\\
&$q_1^{\prime\prime\prime}=36$   & $q_1^{\prime\prime\prime}=36$ & &\\
 \hline
  \multirow{4}{8em}{$P^{\prime\prime}_\beta(1)=8(\beta-1)q_2(\beta)$}   & $q_2=1$    &$q_2=7.5$&\multirow{4}{2em}{$0$} & \multirow{4}{2em}{$\ge0$}   \\
 &$q_2^\prime=6$   & $q_2^\prime=23$ & &\\
  &$q_2^{\prime\prime}=16$   & $q_2^{\prime\prime}=52$ & &\\
&$q_2^{\prime\prime\prime}=72$   & $q_2^{\prime\prime\prime}=72$ & &\\
\hline
\multirow{4}{8em}{$P_\beta(-1)=(2-\beta)q_3(\beta)$}   & $q_3=3$    &$q_3=1.5$&\multirow{4}{2em}{$0$} & \multirow{4}{2em}{$>0$}   \\
 &$q_3^\prime=2$ & $q_3^\prime=-14$ & &\\
  &$q_3^{\prime\prime}=4$   & $q_3^{\prime\prime}=-68$ & &\\
&$q_3^{\prime\prime\prime}=-144$   & $q_3^{\prime\prime\prime}=-144$ & &\\
\hline
  \multirow{4}{8em}{$P^{\prime}_\beta(-1)=8(\beta-1)q_4(\beta)$}   & $q_4=1$    &$q_4=0.75$&\multirow{4}{2em}{$0$} & \multirow{4}{2em}{$\ge0$}   \\
 &$q_4^\prime=0$   & $q_4^\prime=-2.5$ & &\\
  &$q_4^{\prime\prime}=4$   & $q_4^{\prime\prime}=-14$ & &\\
&$q_4^{\prime\prime\prime}=-36$   & $q_4^{\prime\prime\prime}=-36$ & &\\
\hline
\end{tabular}
\caption{\label{case2}Sign of the sequence $\{P_\beta, P_\beta^\prime, P_\beta^{\prime\prime}\}$.}
\end{center}
\end{table}
\begin{table}[H]
\begin{tabular}{ |p{3cm}|p{2.5cm}|p{2.5cm}|p{2cm}|p{2.5cm}|}
\hline
$F(\beta)$& $\beta=3/2$ &$\beta=2$&$\mathcal{N}_{P_n}(3/2,2]$& Sign of $F(\beta)$\\
 \hline 
 \multirow{5}{8em}{$K_\beta(1)=4p_0(\beta)$}   & $p_0=16.6875$    &$p_0=192$&\multirow{5}{2em}{$0$} & \multirow{5}{2em}{$>0$}   \\
 &$p_0^\prime=134.75$   & $p_0^\prime=656$ & &\\
&$p_0^{\prime\prime}=558$   & $p_0^{\prime\prime}=1632$ & &\\
&$p_0^{\prime\prime\prime}=1518$   & $p_0^{\prime\prime\prime}=2778$ & &\\
&$p_0^{(4)}=2520$   & $p_0^{(4)}=2520$ & &\\
 \hline
 \multirow{5}{8em}{$K^\prime_\beta(1)=8p_1(\beta)$}   & $p_1=7.875$    &$p_1=192$&\multirow{5}{2em}{$0$} & \multirow{5}{2em}{$>0$}   \\
 &$p_1^\prime=128$   & $p_1^\prime=713$ & &\\
  &$p_1^{\prime\prime}=606$   & $p_1^{\prime\prime}=1860$ & &\\
&$p_1^{\prime\prime\prime}=1752$   & $p_1^{\prime\prime\prime}=3264$ & &\\
&$p_1^{(4)}=3024$   & $p_1^{(4)}=3024$ & &\\
 \hline
 \multirow{5}{8em}{$K^{\prime\prime}_\beta(1)=8p_2(\beta)$}   & $p_2=3.75$    &$p_2=288$&\multirow{5}{2em}{$0$} & \multirow{5}{2em}{$>0$}   \\
 &$p_2^\prime=170.5$   & $p_2^\prime=1141$ & &\\
  &$p_2^{\prime\prime}=990$   & $p_2^{\prime\prime}=3084$ & &\\
&$p_2^{\prime\prime\prime}=3036$   & $p_2^{\prime\prime\prime}=5340$ & &\\
&$p_2^{(4)}=4608$   & $p_2^{(4)}=4608$ & &\\
 \hline
  \multirow{3}{8em}{$K^{\prime\prime\prime}_\beta(1)=192(\beta-1)(2\beta-3)p_3(\beta)$}   & $p_3=5.75$    &$p_3=12$&\multirow{4}{2em}{$0$} & \multirow{4}{2em}{$\ge0$}   \\
 &$p_3^\prime=11$   & $p_3^\prime=14$ & &\\
  &$p_3^{\prime\prime}=6$   & $p_3^{\prime\prime}=6$ & &\\
\hline
\multirow{4}{8em}{$K_\beta^\prime(-1)=8(2-\beta)p_4(\beta)$}   & $p_4=0.75$    &$p_4=1$&\multirow{4}{2em}{$2$} & \multirow{4}{2em}{not able to determine}   \\
 &$p_4^\prime=-0.5$   & $p_4^\prime=-6$ & &\\
  &$p_4^{\prime\prime}=34$   & $p_4^{\prime\prime}=-56$ & &\\
&$p_4^{\prime\prime\prime}=-180$   & $p_4^{\prime\prime\prime}=-180$ & &\\
\hline
  \multirow{4}{8em}{$K^{\prime\prime}_\beta(-1)=8(2-\beta)p_5(\beta)$}   & $p_5=7.5$    &$p_5=11$&\multirow{4}{2em}{$2$} & \multirow{4}{2em}{not able to determine}   \\
 &$p_5^\prime=-4$   & $p_5^\prime=42$ & &\\
  &$p_5^{\prime\prime}=-52$   & $p_5^{\prime\prime}=236$ & &\\
&$p_5^{\prime\prime\prime}=576$   & $p_5^{\prime\prime\prime}=576$ & &\\
\hline
\end{tabular}
\caption{\label{bigtable}Sign of the sequence $\{K_\beta, K_\beta^\prime, K_\beta^{\prime\prime}, K_\beta^{\prime\prime\prime}\}$.}
\end{table}

\end{document}